\newtheorem{conj}{Conjecture}
\newtheorem{thm}[conj]{Theorem}
\newtheorem*{claim*}{Claim}
\newtheorem{lemma}[conj]{Lemma}
\newtheorem{prop}[conj]{Proposition}
\newtheorem{cor}[conj]{Corollary}
\theoremstyle{definition}
\newtheorem{definition}[conj]{Definition}
\newtheorem{cons}[conj]{Construction}
\newtheorem{ques}[conj]{Question}
\newcommand{\Cc}{\mathcal C}
\newcommand{\Tt}{\mathcal T}
\newcommand{\Ss}{\mathcal S}
\newcommand{\Pp}{\mathcal P}
\tikzstyle{vertex}=[shape=circle, minimum size=2mm, fill, inner sep=0]
\begin{document}

\title{A refinement of theorems on vertex-disjoint chorded cycles}
\author{Theodore Molla\thanks{Department of Mathematics, University of Illinois, Urbana, IL 61801, USA.
This author's research is supported in part by the NSF grant DMS-1500121.
E-mail address: \texttt{molla@illinois.edu}},~
Michael Santana\thanks{Department of Mathematics, University of Illinois, Urbana, IL 61801, USA.
This author's research is supported in part by the NSF grant DMS-1266016 ``AGEP-GRS''.
E-mail address: \texttt{santana@illinois.edu}},~
Elyse Yeager\thanks{Mathematics Department, University of British Columbia, Canada.
E-mail address: \texttt{elyse@math.ubc.ca}}
}

\maketitle
\begin{abstract}{In 1963, Corr\'adi and Hajnal settled a conjecture of Erd\H{o}s by proving that, for all $k \geq 1$, any graph $G$ with $|G| \geq 3k$ and minimum degree at least $2k$ contains $k$ vertex-disjoint cycles. In 2008, Finkel proved that for all $k \geq 1$, any graph $G$ with $|G| \geq 4k$ and minimum degree at least $3k$ contains $k$ vertex-disjoint chorded cycles.
Finkel's result was strengthened by Chiba, Fujita, Gao, and Li in 2010, who showed, among other results,
that for all $k \geq 1$, any graph $G$ with $|G| \geq 4k$ and minimum Ore-degree at least $6k-1$ contains $k$ vertex-disjoint cycles.
We refine this result, characterizing the graphs $G$ with $|G| \geq 4k$ and minimum Ore-degree at least $6k-2$ that do not have $k$ disjoint chorded cycles. 
}\end{abstract}

{\small{Mathematics Subject Classification: 05C35, 05C38, 05C75.}}{\small \par}

{\small{Keywords: Disjoint cycles, chorded cycles,  minimum degree, Ore-degree.}}{\small \par}

\section{Introduction}

All graphs in this paper are simple, unless otherwise noted.  Additionally, when referring to cycles in a graph, ``disjoint'' is always taken to mean ``vertex-disjoint.''  For a graph $G$, we use $V(G)$ and $E(G)$ to denote the vertices and edges, respectively, and for a vertex $v$, we will use $v \in G$ to denote $v \in V(G)$.   For a vertex $v \in G$, and for a subgraph $H$ of $G$ (where possibly $H = G$), the neighborhood of $v$ in $H$ is denoted by $N_H(v)$.  The number of neighbors of $v$ in $H$ (i.e., $|N_H(v)|$) will be written by $d_H(v)$.   Furthermore, we write $|G|$ for the order of a graph $G$, $\overline{G}$ for its complement,  $\delta(G)$ for its minimum degree, and $\alpha(G)$ for its independence number. 

The minimum Ore-degree of a non-complete graph $G$ is written $\sigma_2(G)$, and defined as
\begin{equation*}
  \sigma_2(G):=\min\{d_G(x)+d_G(y):xy \in E(\overline G)\}; 
\end{equation*}
that is, $\sigma_2(G)$ is the minimum degree-sum of nonadjacent vertices.
$K_n$ is the complete graph on $n$ vertices, and $K_{s_1,\ldots,s_t}$ is the complete $t$-partite graph with parts of size $s_1, \ldots,s_t$. For graphs $G$ and $H$, $G+H$ is the disjoint union of $G$ and $H$, and $G \vee H$ is the join of $G$ and $H$.

In 1963, Corr\'adi and Hajnal verified a conjecture of Erd\H{o}s, proving the following.
\begin{thm}[Corr\'adi-Hajnal, \cite{CH}]\label{CHT}
Every graph $G$ on $|G| \geq 3k$ vertices with $\delta(G) \geq 2k$ contains $k$ disjoint cycles.
\end{thm}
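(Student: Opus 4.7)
The plan is to prove the theorem by induction on $k$. The base case $k=1$ is the elementary fact that any graph with minimum degree at least $2$ contains a cycle: take an endpoint $v_1$ of a longest path $v_1v_2\cdots v_t$, observe that $N(v_1)\subseteq V(P)$, and close up the cycle through the farthest neighbor of $v_1$ on the path.

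For the inductive step, suppose the result holds for $k-1$ and let $G$ satisfy $|G|\geq 3k$ and $\delta(G)\geq 2k$, but, for contradiction, admit no $k$ disjoint cycles. Since $|G|\geq 3(k-1)$ and $\delta(G)\geq 2(k-1)$, the inductive hypothesis supplies $k-1$ disjoint cycles in $G$. Among all such collections, fix $\mathcal{C}=\{C_1,\ldots,C_{k-1}\}$ minimizing $\sum_i|V(C_i)|$, and set $H=\bigcup_iV(C_i)$, $R=V(G)\setminus H$.

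I would first observe that $|R|\geq 3$ and $G[R]$ contains no cycle, since otherwise such a cycle together with $\mathcal{C}$ would yield $k$ disjoint cycles. Hence $G[R]$ is a forest on at least three vertices, and therefore contains two non-adjacent vertices $u,v\in R$ with $d_R(u),d_R(v)\leq 1$. Consequently $d_H(u)+d_H(v)\geq 4k-2$, and by pigeonhole on the $k-1$ cycles some $C_i$ receives at least $\lceil(4k-2)/(k-1)\rceil\geq 5$ edges from $\{u,v\}$ (the small case $k=2$ putting all $6$ attachments on the single cycle). In particular, either one of $u,v$ has at least three neighbors on $C_i$, or each of $u,v$ has at least two.

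The crux of the argument is then a local exchange on $C_i$: given this many attachments from the non-adjacent pair $\{u,v\}$, one shows that either (a) some cycle supported on $V(C_i)\cup\{u\}$ or $V(C_i)\cup\{v\}$ is strictly shorter than $C_i$, contradicting the minimality of $\sum_i|V(C_i)|$, or (b) $C_i$ can be partitioned into two arcs that close through $u$ and $v$ respectively into two vertex-disjoint cycles on $V(C_i)\cup\{u,v\}$. In case (b), replacing $C_i$ in $\mathcal{C}$ by these two cycles yields $k$ disjoint cycles in $G$, the desired contradiction.

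The main obstacle is precisely this combinatorial lemma about a single cycle carrying at least five attachments from a non-adjacent pair $\{u,v\}$: one must rule out every interleaving pattern of the neighbors of $u$ and $v$ along $C_i$ that could block both moves (a) and (b) simultaneously. The case analysis turns on whether neighbors of $u$ and $v$ are consecutive, alternating, or shared, and it is here that the hypothesis $uv\notin E(G)$ is essential for closing the second cycle in (b).
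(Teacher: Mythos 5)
This statement is the Corr\'adi--Hajnal theorem, which the paper only cites (as \cite{CH}) and does not prove, so there is no internal proof to compare against; judged on its own terms, your sketch has a genuine gap at exactly the step you label the ``crux.'' The dichotomy (a)/(b) cannot hold as stated. By the minimality of $\sum_i |V(C_i)|$, any vertex of $R$ with three neighbors on a cycle $C_i$ of length at least $4$ already yields a strictly shorter replacement cycle (the three neighbors cut $C_i$ into arcs, the shortest of which has at most $\lfloor |C_i|/3\rfloor$ edges, giving a cycle on at most $\lfloor |C_i|/3\rfloor + 2 < |C_i|$ vertices). So once $\|\{u,v\},C_i\| \geq 5$, either move (a) applies immediately, or $|C_i| = 3$. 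But in the surviving case $|C_i|=3$, move (a) is vacuous (no cycle is shorter than a triangle) and move (b) is arithmetically impossible: $V(C_i)\cup\{u,v\}$ has only five vertices, and two disjoint cycles need six. The blocking configuration --- a triangle $C_i$ with $u$ joined to all three of its vertices and $v$ joined to at least two --- defeats both moves simultaneously, and no interleaving case analysis can rescue this, since the obstruction is a vertex count, not an adjacency pattern. The only local operation available there is an equal-size swap (e.g., replace $C_i$ by the triangle through $v$ and two of its neighbors on $C_i$, throwing $u$ and the third vertex back into $R$), which does not decrease $\sum_i|V(C_i)|$ and hence yields no contradiction by itself. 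This is precisely why actual proofs of Corr\'adi--Hajnal (including the original and the modern treatment in \cite{KKY}) impose secondary extremal conditions beyond minimizing total cycle length --- analogous to (O1)--(O3) in the present paper --- and analyze the endpoints of a \emph{longest path} in $R$, iterating such swaps until an improved configuration is reached, rather than working with an arbitrary pair of forest leaves.

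A secondary, smaller gap: you assert $|R| \geq 3$ without justification. Minimality of the collection bounds neither the length of an individual chordless cycle nor $\sum_i |V(C_i)|$ a priori, so $|H| \leq |G| - 3$ needs an argument (for instance, showing that a cycle of length at least $4$ in a minimal collection, together with the degree condition $\delta(G)\geq 2k$, can be shortened or exchanged); your subsequent extraction of two nonadjacent vertices of degree at most $1$ from the forest $G[R]$ is fine once $|R|\geq 3$ is in hand, as is the pigeonhole computation $\lceil (4k-2)/(k-1)\rceil \geq 5$. In summary: the global frame (induction, minimal collection, forest structure of $R$, pigeonhole onto one cycle) is the right opening and matches the classical approach, but the local exchange lemma you defer is false as stated, and repairing it requires the longest-path-plus-refined-optimality machinery, not just the two moves you propose.
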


This result of Corr\'{a}di and Hajnal has been generalized in various ways.  One such generalization is a strengthening by Enomoto and Wang, who independently proved the following.

\begin{thm}[Enomoto \cite{E} , Wang \cite{W}]\label{EW}
Every graph $G$ on $|G| \ge 3k$ vertices with $\sigma_2(G) \ge 4k - 1$ contains $k$ disjoint cycles.
\end{thm}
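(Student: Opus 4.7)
My plan is to prove Theorem~\ref{EW} by induction on $k$, using Theorem~\ref{CHT} of Corr\'adi and Hajnal as a black-box. The base case $k=1$ merely asks for a single cycle in a graph on $\ge 3$ vertices with $\sigma_2\ge 3$, which is essentially immediate by ruling out the forest case. For the inductive step, let $G$ satisfy the hypotheses for $k$. If $\delta(G)\ge 2k$, then Theorem~\ref{CHT} directly yields $k$ disjoint cycles, so I may assume $\delta(G)\le 2k-1$.

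Fix $v$ with $d_G(v)=\delta(G)\le 2k-1$. The Ore condition forces every non-neighbor $u$ of $v$ to satisfy $d_G(u)\ge 4k-1-d_G(v)\ge 2k$; in particular, the set $L:=\{x\in V(G):d_G(x)\le 2k-1\}$ of low-degree vertices is a clique, since two non-adjacent members would violate $\sigma_2(G)\ge 4k-1$. The goal is then to find a short cycle $C$ through $v$ such that $G':=G-V(C)$ meets the hypotheses for $k-1$, namely $|G'|\ge 3(k-1)$ and $\sigma_2(G')\ge 4k-5$; then induction on $G'$ combined with $C$ gives the desired $k$ disjoint cycles. Writing $\ell=|V(C)|$, a non-edge of $G'$ loses at most $2\ell$ from its Ore-degree, so with inductive slack $4$ the natural target is a triangle whose three vertices collectively meet every non-edge of $G'$ in at most $4$ edges; the length bound $\ell\le 3$ is also tight for preserving $|G'|\ge 3(k-1)$ when $|G|=3k$.

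The main obstacle is producing this short cycle. I plan to case-split on $|L|$ and on the structure of $N_G(v)$. If $|L|\ge 2$, then since $L$ is a clique I can find a triangle at $v$ whose other two vertices are also low-degree, automatically limiting external edges; if $L=\{v\}$, every vertex other than $v$ has degree $\ge 2k$, and I would search for two adjacent neighbors $x,y\in N_G(v)$ such that the common non-neighborhood of $\{v,x,y\}$ absorbs any prospective non-edge of $G'$. When no such triangle exists, I expect $G$ to be forced into a highly restricted structure (roughly, a join of a small clique with a near-complete bipartite graph), from which $k$ disjoint cycles can be exhibited directly. The delicate boundary case $|G|=3k$, together with the pathological configurations around $v$ where the triangle search fails, are the points most likely to resist and may require separate structural lemmas.
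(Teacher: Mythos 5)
The paper does not prove Theorem~\ref{EW}; it quotes it from Enomoto and Wang, whose published proofs run many pages and rest on exactly the machinery this paper sets up in Section~\ref{setup} for the chorded analogue: an extremal choice of a collection of disjoint cycles (minimizing the number of vertices used, then optimizing a longest path in the remainder), followed by a detailed analysis of how the leftover vertices attach to those cycles. No short induction from Theorem~\ref{CHT} is known, and your outline shows why: its central step is unsupported. Deleting a triangle $T$ can cost a nonadjacent pair of $G' := G - T$ up to $6$ in Ore-degree, while your inductive slack is only $4$, so you need a triangle $T$ through the low vertex $v$ such that \emph{every} nonadjacent pair of $G'$ sends at most $4$ edges into $T$; nothing in the proposal produces such a $T$, and in fact a triangle through $v$ need not exist at all. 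Concretely, let $X$ and $Y$ be the parts of $K_{2k,m}$ with $|X| = 2k$ and $m \ge 2k$, and add a vertex $v$ joined to exactly $2k-1$ vertices of $X$. Then $|G| \ge 3k$, $\sigma_2(G) = 4k-1$ (the minimum is attained by the pairs $\{v,y\}$, $y \in Y$), and $\delta(G) = d_G(v) = 2k-1$, so your reduction is forced to run; but $L = \{v\}$ and $N_G(v) \subseteq X$ is independent (the graph is bipartite with parts $X$ and $Y \cup \{v\}$), so both branches of your case split --- a triangle inside the clique $L$, or two adjacent neighbors $x, y \in N_G(v)$ --- fail immediately. This graph does contain $k$ disjoint cycles (take $k$ disjoint $4$-cycles in $K_{2k,m}$), so it is not a counterexample to the theorem; it is a counterexample to your mechanism, and it shows that your fallback case is not a fringe pathology but contains robust families.

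That fallback --- ``when no such triangle exists, $G$ is forced into a highly restricted structure from which $k$ disjoint cycles can be exhibited directly'' --- is asserted, not argued, and it is where the entire content of the theorem lives. Passing to longer cycles does not rescue the arithmetic: removing a cycle of length $\ell \ge 4$ can cost a nonadjacent pair up to $2\ell \ge 8$ against the same slack of $4$, and when $|G| = 3k$ you cannot afford $\ell \ge 4$ for the order condition $|G'| \ge 3(k-1)$ either. (Your base case $k=1$ is fine: a forest on at least $3$ vertices has two distinct, necessarily nonadjacent leaves with degree sum $2 < 3$.) So the proposal has a genuine gap at its core: closing it would require a structural classification of the graphs admitting no good short cycle, which in effect means redoing the Enomoto--Wang extremal analysis, at which point the induction scaffolding is doing no work.
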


Both Theorems \ref{CHT} and \ref{EW} are sharp, leading to the following natural question of Dirac.

\begin{ques}[Dirac, \cite{Di}]\label{DiracQ}
Which $(2k-1)$-connected multigraphs do not contain $k$ disjoint cycles?
\end{ques}

Question~\ref{DiracQ} was answered in the case of simple graphs in~\cite{KKY}, and then in multigraphs in~\cite{KKY2}. Indeed, \cite{KKY} together with \cite{KKMY} answer a more general question for simple graphs, describing graphs with minimum Ore-degree at least $4k - 3$ with no $k$ disjoint cycles. To avoid going into too many technical details, we only provide part of this description below.

\begin{thm}[\cite{KKY}, \cite{KKMY}]\label{KKYT}
Given an integer $k \geq 4$, let $G$ be a graph on $|G| \ge 3k$  vertices with $\sigma_2(G) \ge 4k - 3$. Then $G$ contains $k$ disjoint cycles if and only if none of the following hold:
\begin{enumerate}
\item $\alpha(G) \ge |G| - 2k+1$.
\item $G=(K_c + K_{2k-c}) \vee \overline{K_k}$ for some odd $c$
\item $G=(K_1 + K_{2k}) \vee \overline{K_{k-1}}$
\item $|G|=3k$ and $\overline{G}$ is not $k$-colorable
\end{enumerate}
\end{thm}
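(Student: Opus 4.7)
The theorem has two directions. The \emph{if} direction---that each of (1)--(4) precludes $k$ disjoint cycles---is routine. For (1), any cycle meets an independent set in at most $\lfloor|C|/2\rfloor$ of its vertices, so uses at least two vertices outside; hence $k$ disjoint cycles need at least $2k$ vertices outside the independent set, exceeding the available $2k-1$. For (2) and (3), $|G|=3k$ forces any $k$ disjoint cycles to partition $V(G)$ into triangles; each triangle meets the independent side in at most one vertex, so in (2) the number of triangles having two vertices in $K_c$ must equal $c/2$, which fails when $c$ is odd, and in (3) the single vertex of $K_1$ has all its neighbors in $\overline{K_{k-1}}$ and therefore lies in no triangle at all. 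For (4), $k$ disjoint triangles in a $3k$-vertex graph are exactly a partition of $V(G)$ into $k$ cliques of size three, i.e., a proper $k$-coloring of $\overline{G}$.

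For the \emph{only if} direction, let $G$ satisfy the hypotheses and contain no $k$ disjoint cycles. Applying Theorem~\ref{EW} with $k-1$ in place of $k$ (note $4(k-1)-1 = 4k-5 \le 4k-3$ and $3(k-1) \le 3k$), $G$ contains $k-1$ disjoint cycles. Fix such a family $\mathcal{C}=\{C_1,\ldots,C_{k-1}\}$ chosen first to minimize $\sum_i|C_i|$, and subsequently to optimize a secondary parameter (for instance, concentrating chords inside $V(\mathcal{C})$). Let $H$ denote the subgraph induced on $V(G)\setminus V(\mathcal{C})$. Since no $k$-th disjoint cycle exists, $H$ induces a forest; the extremal choice of $\mathcal{C}$ allows a further reduction forcing $H$ to behave essentially as an independent set in the arguments below.

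The analytic core is to distribute the Ore-degree budget across $\mathcal{C}$. For any two nonadjacent $u,v \in H$, the hypothesis yields $d_G(u)+d_G(v) \geq 4k-3$, with every such edge going into $V(\mathcal{C})$. Since $|\mathcal{C}|=k-1$, averaging forces some cycle $C_i$ to absorb disproportionately many edges from $\{u,v\}$. Local surgery inside such a $C_i$---for example, replacing two cycle-consecutive vertices by $\{u,v\}$ and rerouting the displaced pair into a short new cycle---produces $k$ disjoint cycles, unless very restrictive conditions hold: every $C_i$ is a triangle, and the bipartite graph between $H$ and $\bigcup_i V(C_i)$ has highly constrained structure. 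Each failure of the surgery is what eventually flags one of the four exceptional families.

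The main obstacle, carrying the bulk of the argument, is the extremal classification when every $C_i$ is a triangle, so that $|V(\mathcal{C})| = 3(k-1)$ and $|H| = |G|-3k+3 \geq 3$. Here one must catalog all attachment patterns of the (near-)independent set $H$ to the $k-1$ triangles that admit no reconfiguration producing a $k$-th cycle. Parity accounting on how the triangles use the two cliques produces the odd-$c$ condition of case (2); a dedicated analysis of a degree-$k-1$ vertex with no neighbor outside the independent side yields case (3); large $\alpha(G)$ recovers case (1); and for the boundary $|G|=3k$ a coloring/matching duality on $\overline{G}$ yields case (4). The hypothesis $k \geq 4$ is used throughout to rule out sporadic small-$k$ configurations and to give the averaging argument across $k-1$ cycles enough quantitative slack, and I expect the meticulous extremal case analysis in this final paragraph to be by far the hardest step.
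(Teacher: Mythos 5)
This statement is quoted background: the paper does not prove Theorem~\ref{KKYT} at all, but imports it from \cite{KKY} and \cite{KKMY}, where the proof occupies the bulk of two substantial papers. So the relevant comparison is between your sketch and those proofs, and there is a genuine gap. Your \emph{if} direction is correct and complete: the counting for (1) (each cycle needs two vertices outside an independent set), the parity argument for (2), the triangle-free vertex for (3), and the clique-partition/coloring duality for (4) are all sound, and this half really is routine. The problem is the \emph{only if} direction, which is the entire content of the theorem and which your proposal asserts rather than proves. The sentence ``Local surgery inside such a $C_i$ \dots produces $k$ disjoint cycles, unless very restrictive conditions hold: every $C_i$ is a triangle, and the bipartite graph between $H$ and $\bigcup_i V(C_i)$ has highly constrained structure'' is a statement of the theorem's hard half, not an argument for it; likewise ``Each failure of the surgery is what eventually flags one of the four exceptional families'' names the destinations without supplying any route.

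Two concrete points where the sketch would not survive being made precise. First, the averaging step is quantitatively too weak for the surgery you describe: for a nonadjacent pair $u,v$ in $H$, $\sigma_2(G)\ge 4k-3$ only guarantees some cycle $C_i$ with $\|\{u,v\},C_i\|\ge 4$, and when $C_i$ is a triangle (the case you must end up in), four edges from $\{u,v\}$ into a triangle do not let you replace two consecutive vertices and ``reroute the displaced pair into a short new cycle''---the displaced vertices need a cycle of their own, which is exactly what cannot be guaranteed; the hard configurations are precisely $K_{3,t}$-like attachments of many vertices of $H$ to the same triangle, which admit no such swap and must be classified, not dismissed. Second, the Ore hypothesis (unlike a minimum-degree hypothesis) permits vertices of very small degree, and it is these that generate the large-independent-set family (1); your pair-averaging over nonadjacent $u,v\in H$ never isolates or controls a single low-degree vertex, and the claim that optimality ``forces $H$ to behave essentially as an independent set'' is not established (in \cite{KKY} the analysis of the leftover set, including long paths in it, and the boundary case $|G|=3k$ leading to the $k$-colorability criterion (4), is a lengthy extremal case analysis; \cite{KKMY} is needed additionally to pass from the degree version to the Ore version). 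As written, your last paragraph concedes this---``I expect the meticulous extremal case analysis \dots to be by far the hardest step''---which is an accurate assessment: that step is missing, and with it the proof.
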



In 2008 Finkel proved the following chorded-cycle analogue to Theorem~\ref{CHT}.
\begin{thm}[Finkel, \cite{F}]\label{FinkelT}
Every graph $G$ on $|G| \geq 4k$ vertices with $\delta(G) \geq 3k$ contains $k$ disjoint chorded cycles.
\end{thm}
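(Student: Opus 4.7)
The plan is to prove Theorem~\ref{FinkelT} by induction on $k$, following the extremal template that underlies the Corr\'adi--Hajnal proof of Theorem~\ref{CHT}, with ``cycle'' replaced by ``chorded cycle.'' For the base case $k = 1$, any $G$ with $|G| \geq 4$ and $\delta(G) \geq 3$ contains a chorded cycle: taking a longest path $v_0 v_1 \cdots v_\ell$, all (at least three) neighbors of $v_0$ lie on $P$, say at positions $1 \leq a_1 < a_2 < a_3$. Then the cycle $v_0 v_1 \cdots v_{a_3} v_0$ has length at least four, and $v_0 v_{a_2}$ is a chord because $a_2 \geq 2$.

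For the inductive step, suppose $k \geq 2$ and the theorem holds for $k-1$. Since $\delta(G) \geq 3k \geq 3(k-1)$ and $|G| \geq 4k \geq 4(k-1)$, the inductive hypothesis yields $k-1$ disjoint chorded cycles in $G$. Among all such families $\mathcal{F} = \{C_1, \ldots, C_{k-1}\}$ choose one minimizing $s(\mathcal{F}) := \sum_i |C_i|$, and subject to that, maximizing $|E(G[R])|$, where $R := V(G) \setminus \bigcup_i V(C_i)$. The goal is to produce a $k$-th chorded cycle disjoint from $\mathcal{F}$, either inside $G[R]$ directly or after a legal exchange with one of the $C_i$.

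First one shows $|R| \geq 4$. This uses $|G| \geq 4k$ together with the minimality of $s(\mathcal{F})$: if some $C_i$ had length at least $5$, one argues that $V(C_i)$ alone (or together with a single $R$-vertex that has many neighbors on $C_i$) contains a shorter chorded cycle, contradicting the choice of $\mathcal{F}$. Second, assuming for contradiction that $G[R]$ has no chorded cycle, its blocks are edges or chordless cycles, so $G[R]$ is a cactus and in particular $|E(G[R])| \leq 2(|R|-1)$. Consequently, each $v \in R$ spends most of its $\geq 3k$ edges on $H := \bigcup_i V(C_i)$, and pigeonhole yields a vertex $v \in R$ and an index $i$ with $d_{C_i}(v)$ large, or a pair of adjacent or close $R$-vertices whose combined attachments to one $C_i$ are large.

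The main obstacle is the augmentation step: translating ``many attachments from $R$ into one $C_i$'' into a contradiction. The delicate work is a case analysis on the cyclic arrangement of the $R$-neighbors on $C_i$ relative to the chords of $C_i$, showing that in every configuration one can either (a) build a chorded cycle on $V(C_i) \cup \{v\}$ of length strictly less than $|C_i|$, contradicting the minimality of $s(\mathcal{F})$, or (b) split $V(C_i) \cup \{v\}$ into a new chorded cycle $C_i'$ (disjoint from the other $C_j$) together with a set of ``released'' vertices that, combined with $R \setminus \{v\}$, span a chorded cycle disjoint from $\mathcal{F} \setminus \{C_i\} \cup \{C_i'\}$. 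The hardest subcase is when no single $C_i$ is heavily hit but several are moderately hit; here the secondary minimality (maximizing $|E(G[R])|$) is used to force an edge of $R$ that, combined with symmetric attachments to a common $C_i$, produces a chorded $4$-cycle straddling $R$ and $C_i$ and frees enough vertices to complete the argument.
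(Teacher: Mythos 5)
Your base case is correct, and your extremal template is in fact the same one the paper uses: the paper does not reprove Finkel's theorem directly (it cites \cite{F} and derives it from the stronger Theorem~\ref{main}), but its proof of the refinement chooses $k-1$ disjoint chorded cycles minimizing the number of vertices used (condition (O1)) and studies the leftover graph $R$, exactly as you propose. However, two of your supporting claims are false. First, minimality of $s(\mathcal F)$ does not force short cycles: a cycle of \emph{any} length with a single chord contains no chorded cycle on fewer vertices, so an $s$-minimal family may consist of long cycles and your deduction that $|R| \ge 4$ collapses. (Indeed, in the paper's analysis the optimal collection typically consists of six-vertex cycles with $G[C] \cong K_{3,3}$; the correct substitute for your claim is Lemma~\ref{RCedges}, which constrains $|C|$ only when some vertex of $R$ sends at least three edges into $C$, and even then yields $|C| \in \{4,5,6\}$ with specific structure.) Second, a graph with no chorded cycle need not be a cactus: $K_{2,3}$ is $2$-connected, is not a cycle, and has no chorded cycle. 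The right statement is that every block is an edge or a minimally $2$-connected graph, and your numerical bound $|E(G[R])| \le 2(|R|-1)$ happens to survive (minimally $2$-connected graphs on $m$ vertices have at most $2m-4$ edges), but the structural picture your later case analysis would rest on does not. This is not a harmless omission: in the paper's argument $R \cong K_{2,3}$ is a genuine extremal configuration (Lemmas~\ref{struct1}, \ref{structure}, and \ref{K32_K22}), the very one responsible for the exceptional graph $G_2(k)$. Relatedly, your per-vertex claim that each $v \in R$ sends ``most'' of its edges to the cycles is only true on average, since chorded-cycle-free graphs can have vertices of large degree (e.g., the centers of $K_{2,m}$).

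More seriously, the augmentation step---your dichotomy (a)/(b) and the ``several moderately hit cycles'' subcase---is a description of what must be proved, not an argument; this is where all the content of Finkel's theorem lives. The known proofs (Finkel's, Chiba--Fujita--Gao--Li's, and the paper's) do this work through quantitative exchange lemmas such as Lemmas~\ref{RCedges}, \ref{C4bounds}, \ref{noC5}, and \ref{P2P3}, combined with a longest-path analysis of $R$. Note also that the paper's secondary optimizations differ from yours in a way that matters: after (O1) it maximizes the number of chords (O2) and then the length of a longest path $P$ in $R$ (O3), because an endpoint $p$ of a longest path satisfies $\|p,R\| \le 2$ and cannot be extended, which is precisely what converts the degree hypothesis into many edges from $R$ into $\Cc$ and makes the exchange arguments close. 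Your secondary criterion of maximizing $|E(G[R])|$ confers no comparable leverage on any particular vertex, and the assertion in your outcome (b) that an exchange ``frees enough vertices'' to assemble a $k$-th chorded cycle is exactly the point at issue. As it stands, the proposal is a sensible outline of the correct strategy, but with two incorrect intermediate claims and the central step missing.
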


A stronger vertion of Theorem~\ref{FinkelT} was conjectured by Bialostocki, Finkel, and
Gy\'arf\'as in \cite{BFG}, and proved by Chiba, Fujita, Gao, and Li in \cite{CFGL}.

\begin{thm}[Chiba-Fujita-Gao-Li, \cite{CFGL}]\label{CFGLT}
Let $r$ and $k$ be integers with $r+k \geq 1$. Every graph $G$ on $|G| \geq 3r+4k$ vertices with $\sigma_2(G) \geq 4r+6k-1$ contains a collection of $r+k$ disjoint cycles such that  $k$ of these cycles are chorded.
\end{thm}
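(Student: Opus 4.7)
The plan is to induct on $r+k$. The base case $r+k=1$ has two subcases: if $r=1,k=0$, the hypotheses $|G|\ge 3$ and $\sigma_2(G)\ge 3$ quickly force a cycle; if $r=0,k=1$, then $|G|\ge 4$ and $\sigma_2(G)\ge 5$, and a direct check shows that $G$ must contain $K_4-e$ as a subgraph, which supplies a chorded $4$-cycle. For the inductive step, the case $k=0$ follows immediately from Theorem~\ref{EW}, since our hypotheses already give $|G|\ge 3r$ and $\sigma_2(G)\ge 4r-1$. From here on assume $k\ge 1$.

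The strategy is to find a chorded cycle $C$ with $|V(C)|=4$ such that $G':=G-V(C)$ satisfies the hypotheses of the theorem with parameters $(r,k-1)$, namely $|G'|\ge 3r+4(k-1)$ and $\sigma_2(G')\ge 4r+6k-7$. The vertex-count bound is automatic. The main obstacle is preserving the Ore-degree bound: removing the four vertices of $C$ can cost a non-adjacent pair $xy$ in $G'$ up to $8$ in joint degree, while $\sigma_2(G)\ge 4r+6k-1$ only provides a slack of $6$ against the required $4r+6k-7$. To close this two-deficit I would choose $C$ extremally, for instance to minimize $\sum_{v\in V(C)} d_G(v)$ among all chorded $4$-cycles, and then argue that any non-adjacent pair $xy$ in $G'$ whose joint loss to $V(C)$ exceeds $6$ must support a chorded $4$-cycle through $x$ or $y$ of strictly smaller total degree, contradicting the choice of $C$.

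The hardest case will be when $G$ contains no chorded $4$-cycle at all. Here the plan is to apply Theorem~\ref{EW} to obtain $r+k$ disjoint cycles and then use the surplus $\sigma_2(G)-(4(r+k)-1)\ge 2k$ in the hypothesis to upgrade $k$ of the cycles to chorded cycles by local swaps with vertices outside the collection. The delicate point will be ruling out extremal near-counterexamples such as blow-ups of $K_{2,2,2}$ or of complete bipartite graphs, where chorded cycles are scarce even though $\sigma_2(G)$ is large; these cases will require a tailored structural argument exploiting the absence of chorded $4$-cycles to force a very specific global geometry on $G$, and I expect them to account for the bulk of the technical work.
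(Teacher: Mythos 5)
First, note that the paper you are working against does not prove Theorem~\ref{CFGLT} at all: it is quoted from Chiba--Fujita--Gao--Li \cite{CFGL}, where it is the main result of a full-length paper, proved via a minimal-counterexample argument with an extremally chosen collection of cycles and chorded cycles (much like the setup (O1)--(O3) in Section~\ref{setup} here), not by the short induction you propose. Your sketch has concrete gaps. The base case $(r,k)=(0,1)$ is wrong as stated: $\sigma_2(G)\ge 5$ and $|G|\ge 4$ do \emph{not} force a $K_4-e$ subgraph. Any triangle-free example kills this, e.g.\ $K_{3,3}$ has $\sigma_2=6$ and contains no $K_4-e$ (though it does contain a chorded $6$-cycle, so the theorem's conclusion survives while your ``direct check'' does not). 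Worse, the same phenomenon destroys the inductive engine: in any bipartite graph a chord of a $4$-cycle would create a triangle, so bipartite graphs contain no chorded $4$-cycle whatsoever, and bipartite graphs such as $K_{2r+3k,\,m}$ (and the sharpness example $G_1(n,k)=K_{3k-1,n-3k+1}$ of this very paper) satisfy the hypotheses. So the case you defer as ``the hardest case'' is not a residual case --- it contains exactly the extremal graphs for which the theorem is sharp --- and your fallback there (apply Theorem~\ref{EW}, then ``upgrade $k$ cycles by local swaps'' using the surplus $2k$) names no mechanism at all; converting cycles of an Enomoto--Wang collection into chorded cycles while keeping everything disjoint is precisely the content of the theorem, not a routine perturbation.

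The deficit-closing argument in the main inductive step also fails on its own terms. Suppose $x,y$ are nonadjacent in $G'=G-V(C)$ with $\|x,C\|+\|y,C\|\ge 7$; then indeed, say, $\|x,C\|=4$, and swapping $x$ for a suitable vertex $u\in V(C)$ yields another chorded $4$-cycle $C'$ with degree sum $\sum_{v\in C}d_G(v)+d_G(x)-d_G(u)$. But your extremal choice of $C$ (minimizing the degree sum) only gives $d_G(x)\ge d_G(u)$ --- an inequality pointing the wrong way, yielding no contradiction. Moreover, you must preserve the Ore bound for \emph{every} nonadjacent pair of $G'$ simultaneously, and a single extremal choice of one $4$-vertex set cannot plausibly control all such pairs; this is why the known proofs abandon vertex-deletion induction on $\sigma_2$ conditions and instead fix $k$ (or $r+k$) and analyze an optimal collection plus the leftover graph $R$ globally. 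In short: the $k=0$ step via Theorem~\ref{EW} is fine, but the base case claim is false, the chorded-$4$-cycle induction cannot even start on the bipartite extremal family, and the extremal-choice repair of the Ore condition does not close the two-deficit. The proposal is a plan with the genuinely hard parts still open, not a proof.
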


 In particular, the following corollary holds.

\begin{cor}[Chibia-Fujita-Gao-Li, \cite{CFGL}]\label{CFGLC}
Every graph $G$ on $|G| \ge 4k$ vertices with $\sigma_2(G) \ge 6k - 1$ contains a collection of $k$ disjoint chorded cycles.
\end{cor}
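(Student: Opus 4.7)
\medskip
\noindent\textbf{Proof proposal.}
The corollary is an immediate specialization of Theorem~\ref{CFGLT}: setting $r=0$ (so $r+k = k \ge 1$) turns the hypotheses into $|G| \ge 4k$ and $\sigma_2(G) \ge 6k-1$, and the conclusion into the existence of $r+k = k$ disjoint cycles, all of which are required to be chorded. So if Theorem~\ref{CFGLT} is available as a black box, nothing further is needed.

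If one instead wishes to prove the corollary directly (which is the natural starting point if one is in fact aiming for Theorem~\ref{CFGLT} itself), the plan follows the standard framework for disjoint-cycle problems. Suppose $G$ is a minimum counterexample and let $\mathcal{F} = \{C_1, \dots, C_\ell\}$ be a maximum collection of disjoint chorded cycles in $G$, so $\ell \le k-1$, chosen so that $\sum_i |C_i|$ is minimum. Set $R = G - \bigcup_i V(C_i)$; by the maximality of $\ell$, $R$ contains no chorded cycle, so every block of $R$ is an edge or a chordless cycle, and in particular $R$ has at most $\tfrac{3}{2}|R|$ edges. A separate argument, using the Ore-degree hypothesis to find a short chorded cycle inside any dense piece of $G$, shows that the minimality of $\sum_i |C_i|$ forces each $C_i$ to be short (one expects $|C_i| \in \{4,5\}$), so $|R|$ is not too small.

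The key step is an exchange argument. For any two non-adjacent vertices $x,y \in R$, the hypothesis gives $d_G(x)+d_G(y) \ge 6k-1$; since $R$ is sparse, all but $O(1)$ of these incidences land on $\bigcup_i V(C_i)$. Pigeonholing over the $\ell \le k-1$ cycles then produces some $C_i$ on which $x$ and $y$ jointly have at least $\lceil (6k-1)/(k-1)\rceil = 7$ neighbors, and a case analysis on $|C_i|$ shows that $V(C_i) \cup \{x,y\}$ can be rearranged into either two disjoint chorded cycles (contradicting the maximality of $\ell$) or a single chorded cycle together with a strictly shorter chorded cycle on $V(C_i)\cup\{x,y\}$ minus a few vertices (contradicting the minimality of $\sum_i|C_i|$). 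The principal obstacle is exactly this case analysis: the chorded-cycle exchange is substantially more delicate than the Corr\'adi--Hajnal cycle exchange, since one must simultaneously track a spanning cycle of the rearranged subgraph \emph{and} a chord. The threshold $6k-1$ is tight in the sense that it is precisely what makes the tightest pigeonhole subcase go through; this is also why the refinement to $\sigma_2(G) \ge 6k-2$ promised by the abstract of this paper requires a genuine extremal analysis rather than a small tweak of the above argument.
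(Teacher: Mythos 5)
Your first paragraph is exactly the paper's approach: the corollary is presented as an immediate specialization of Theorem~\ref{CFGLT} with $r=0$, with no further argument given (the full proof lives in \cite{CFGL}). Your second, speculative direct-proof sketch is unnecessary for this statement and need not be checked here.
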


All hypotheses in Theorem \ref{FinkelT} and Corollary \ref{CFGLC} are sharp. First, since any chorded cycle contains at least four vertices, if $|G|<4k$ then $G$ does not contain $k$ disjoint chorded cycles. Second, the conditions $\delta(G) \geq 3k$ and $\sigma_2(G) \ge 6k - 1$ are best possible, as demonstrated by the two graphs below.

 \begin{definition}\label{G1}
 For   $n \geq 6k-2$, define
 $G_1(n,k):=K_{3k-1,n-3k+1}$  (Figure~\ref{G1F}).
 For $k \geq 2$, define $G_2(k):=K_{3k-2,3k-2,1}$ (Figure~\ref{G2F}). 
   \end{definition}
For  $n \geq 6k-2$, $|G_1(n,k)|=n \geq 4k$ and $\sigma_2(G_1(n,k)) =6k-2$. Each chorded cycle in $G_1(n,k)$ uses at least three vertices from each part, so $G_1(n,k)$ does not contain $k$ disjoint chorded cycles.
For $k \geq 2$, $|G_2(k)|=6k-3 \geq 4k$ and $\sigma_2(G_2(k)) =6k-2$. Each chorded cycle in $G_2(k)$ uses three vertices from each of the big parts, or the dominating vertex and at least two vertices from a big part, so $G_2(k)$ does not contain $k$ chorded cycles.
 
 \begin{figure}[ht]
 \centering
 \begin{subfigure}[b]{.4\textwidth}
 \centering
 \begin{tikzpicture}
 \draw (0,-.25) node{$3k-1$};
 \draw (2,-.5) node{$n-3k+1$};
 \foreach \z in {1,2,3,4,5}{
 	\draw (0,-0.0225+.5*\z) node[vertex](a\z){};}
\foreach \x in {0,1,5,6}{
	\draw(2,.5*\x) node[vertex](b\x){};
	\foreach \y in {1,2,3,4,5}
		{\draw (a\y)--(b\x);}}
\foreach \x in {1,2,3}{
	\draw (2,1+.25*\x) node[shape=circle, minimum size=1mm, inner sep=0, fill, draw]{};}
 \end{tikzpicture}
 \caption{$G_1(n,k)$, shown for $k=2$}\label{G1F}
 \end{subfigure}
 \begin{subfigure}[b]{.4\textwidth}\centering
  \begin{tikzpicture}
   \draw (0,0) node{$3k-2$};
 \draw (2,0) node{$3k-2$};
\draw (1,3) node[vertex](c){};
 \foreach \x in {1,2,3,4}{
 	\draw (0,.5*\x) node[vertex](a\x){};
 	\draw (a\x)--(c);}
\foreach \x in {1,2,3,4}{
	\draw(2,.5*\x) node[vertex](b\x){};
	\draw (b\x)--(c);
	\foreach \y in {1,2,3,4}
		{\draw (a\y)--(b\x);}}
  \end{tikzpicture}
 \caption{$G_2(k)$, shown for $k=2$}\label{G2F}
 \end{subfigure}
 \caption{Graphs $G_1(n,k)$ and $G_2(k)$ from Definition~\ref{G1}.}
 \end{figure}

We can now ask a question similar to Question~\ref{DiracQ}: 
which graphs $G$ with $\sigma_2(G) \geq 6k-2$ do \emph{not} contain $k$ disjoint chorded cycles? Our main result is the following.

\begin{thm}\label{main} 
For  $k \geq 2$,
let $G$ be a graph with $n:=|G| \geq 4k$ and $\sigma_2(G) \geq 6k-2$.  $G$ does not contain $k$ disjoint chorded cycles if and only if $G \in \{G_1(n,k),G_2(k)\}$.
\end{thm}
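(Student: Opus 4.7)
The ``if'' direction is established in the discussion preceding the statement, so I focus on the ``only if'' direction: I assume $G$ satisfies $|G|=n\geq 4k$ and $\sigma_2(G)\geq 6k-2$ but contains no $k$ disjoint chorded cycles, and argue that $G\in\{G_1(n,k),G_2(k)\}$. In particular $\sigma_2(G)=6k-2$, for otherwise Corollary~\ref{CFGLC} yields $k$ disjoint chorded cycles. Since $6k-2\geq 4+6(k-1)-1$ and $n\geq 4k\geq 3+4(k-1)$, Theorem~\ref{CFGLT} applied with $r=1$ produces $k-1$ disjoint chorded cycles $C_1,\dots,C_{k-1}$ together with one additional cycle $C$ disjoint from them. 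I fix such a family minimizing $|\bigcup_i V(C_i)|$ first and $|V(C)|$ second, and let $R:=V(G)\setminus(V(C)\cup\bigcup_i V(C_i))$.

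The heart of the argument consists of a sequence of exchange lemmas relative to this optimal configuration. Using minimality I would show that each $C_i$ has exactly four vertices (so each is a $K_4^-$ or $K_4$), that $C$ is chordless, and that no vertex $v\in R\cup V(C)$ has more than three neighbors on any $C_i$; in each case, a violation lets one swap $v$ in and a vertex of $C_i$ out, strictly reducing $|\bigcup_i V(C_i)|$ or producing a $k$th chorded cycle outright. I would then study $G[R\cup V(C)]$: the absence of a chorded cycle disjoint from $C_1,\dots,C_{k-1}$ forces this induced subgraph to be essentially bipartite, with one side small and containing $V(C)$.

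Combining these structural conclusions with the degree hypothesis, one can show that any pair of nonadjacent vertices $x,y$ realizing $d(x)+d(y)=6k-2$ must share a common neighborhood $B$ of size exactly $3k-1$, and that nearly every vertex of $V(G)\setminus B$ sends all of its edges into $B$. A case split on whether $B$ is independent identifies $G$: if it is, then $V(G)\setminus B$ is independent and fully joined to $B$, giving $G=K_{3k-1,n-3k+1}=G_1(n,k)$; if $B$ contains an edge, then $B$ must consist of $3k-2$ pairwise nonadjacent vertices together with one vertex adjacent to all of them, $n$ must equal $6k-3$, and $G=K_{3k-2,3k-2,1}=G_2(k)$.

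The main obstacle is the exchange-argument bookkeeping: ruling out every non-extremal configuration by exhibiting an explicit local modification---moving a vertex of some $C_i$ into $R\cup V(C)$, rerouting $C$ through $R$, or absorbing a vertex of $R$ into a $C_i$---that either shortens the configuration or produces the missing chorded cycle. The small case $k=2$ is especially delicate because $|R|$ may be as small as $1$ when $n=4k$ and several of the counting estimates become tight; that case may require a separate direct argument.
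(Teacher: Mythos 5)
There is a genuine gap---in fact a false structural claim---at the heart of your plan. You assert that minimality of $\left|\bigcup_i V(C_i)\right|$ forces each $C_i$ to have exactly four vertices (a $K_4$ or $K_4$ minus an edge). This cannot be proved, because it fails in the extremal graphs themselves: $G_1(n,k)=K_{3k-1,n-3k+1}$ is bipartite, hence triangle-free, so its shortest chorded cycles have \emph{six} vertices (three in each part, inducing $K_{3,3}$), and in $G_2(k)=K_{3k-2,3k-2,1}$ every chorded $4$-cycle must use the dominating vertex, so any disjoint family contains at most one of them. Consequently, in a counterexample $G$ (which is necessarily close to $G_1$ or $G_2$) an optimal family of $k-1$ disjoint chorded cycles consists almost entirely of $6$-vertex $K_{3,3}$'s, not $4$-vertex cycles. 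The paper's Lemma~\ref{RCedges} shows what minimality actually buys: a cycle $C$ of the optimal family has $|C|\in\{4,5,6\}$ only under the additional hypothesis that some outside vertex sends at least three edges into $C$, and the possible structures are $K_4$, a specifically chorded $C_5$, or $K_{3,3}$; Lemmas~\ref{|C|=6} and~\ref{structure} then upgrade this to: every cycle of $\Cc$ induces $K_{3,3}$, with at most one exceptional $K_{1,1,2}$ occurring precisely in the $R\cong K_{2,3}$ case that produces $G_2(k)$. Your subsequent steps---that $G[R\cup V(C)]$ is essentially bipartite with a small side containing $V(C)$, and the common-neighborhood argument identifying $B$ of size $3k-1$---are all built on the false four-vertex picture, so the plan collapses at its first structural lemma rather than at ``bookkeeping.''

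Two secondary points. First, your opening is fine and mildly different from the paper: the paper obtains the $k-1$ disjoint chorded cycles from an edge-maximality argument (add any missing edge to create $k$ chorded cycles, then discard the one using that edge), whereas you invoke Theorem~\ref{CFGLT} with $r=1$; both are legitimate, though the paper's edge-maximal graph $\tilde G\supseteq G$ is also what makes the final identification step clean (one shows $\tilde G\in\{G_1,G_2\}$ and notes any proper spanning subgraph has $\sigma_2<6k-2$). Second, your optimization---minimize $\left|\bigcup_i V(C_i)\right|$, then $|V(C)|$---is weaker than the paper's (O1)--(O3): maximizing the number of chords (O2) and the longest path in $R$ (O3) are used constantly in the exchange arguments (e.g., throughout Lemma~\ref{RCedges} and in Section~\ref{R ne P}), and without analogues of them many of the swaps you gesture at do not yield contradictions.
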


The condition $k \geq 2$ in Theorem~\ref{main} is necessary, as subividing every edge of a graph results in a new graph with no chorded cycles.  Thus, for $k = 1$, we obtain the following characterization, which is analogous to the characterization of acyclic graphs as the graphs for which there exists at most one path between every pair of vertices.

\begin{prop}
A graph $G$ has no chorded cycle if and only if for all $uv \in E(G)$, $G - uv$ has at most one path between $u$ and $v$.
\end{prop}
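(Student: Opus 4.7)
The plan is to prove the biconditional through the standard chord--cycle correspondence: a chord $uv$ of a cycle $C$ in $G$ is exactly a cycle of $G - uv$ through both $u$ and $v$, which by Menger's theorem is equivalent to the existence of two internally vertex-disjoint $uv$-paths in $G - uv$. Reading ``at most one $uv$-path'' in this Menger-theoretic sense (i.e., no two internally vertex-disjoint $uv$-paths), both directions of the biconditional follow by unrolling the correspondence.

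For the forward direction I would argue by contrapositive: if some edge $uv \in E(G)$ admits two internally vertex-disjoint $uv$-paths $P_1, P_2$ in $G - uv$, then each $P_i$ has length at least $2$ (since $uv \notin E(G - uv)$), so $P_1 \cup P_2$ is a cycle $C$ of length at least $4$ in $G - uv$ on which $u$ and $v$ are non-adjacent; the edge $uv \in E(G)$ is then a chord of $C$, producing a chorded cycle in $G$. For the reverse direction, also by contrapositive: if $G$ contains a chorded cycle consisting of a cycle $C$ together with a chord $uv \in E(G)$ joining non-adjacent vertices of $C$, then $uv \notin E(C)$, so $C$ survives as a cycle of $G - uv$ through both $u$ and $v$, and the two $u$-$v$ arcs of $C$ are two internally vertex-disjoint $uv$-paths in $G - uv$.

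The only subtlety I anticipate is the correct interpretation of ``at most one path.'' Two distinct simple $uv$-paths in $G - uv$ may share internal vertices without their union containing a cycle through both $u$ and $v$ (as in $K_{2,3}$, where $u\text{-}a\text{-}b\text{-}v$ and $u\text{-}c\text{-}b\text{-}v$ are two simple $uv$-paths in $K_{2,3} - uv$ yet $K_{2,3}$ has no chorded cycle), so the hypothesis should be read through Menger. Under that convention the proof reduces to the chord--cycle translation above, and no further combinatorial work is required.
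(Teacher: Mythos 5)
The paper states this proposition without proof (it appears as an aside justifying the restriction $k \ge 2$), so there is no argument of the authors' to compare yours against; your chord--arc correspondence is the natural proof and it is correct. More importantly, the subtlety you flag is not a mere convention choice: under the literal reading of ``at most one path'' (at most one simple $u$--$v$ path in $G - uv$), the forward implication of the proposition is actually false, and your $K_{2,3}$ example establishes this. Concretely, take $K_{2,3}$ with parts $\{u,b\}$ and $\{a,c,v\}$: every cycle in $K_{2,3}$ is a chordless $4$-cycle, so there is no chorded cycle, yet for the edge $uv$ the graph $K_{2,3} - uv$ contains the two distinct simple paths $u a b v$ and $u c b v$, which share the internal vertex $b$. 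So the internally-disjoint reading you adopt is \emph{forced} if the biconditional is to hold; in effect you have corrected an imprecision in the paper's wording rather than merely interpreted it, and it would be worth recording the intended statement as: $G$ has no chorded cycle if and only if for all $uv \in E(G)$, $G - uv$ has no two internally disjoint $u$--$v$ paths. With that reading both of your directions are complete: since $uv \notin E(G-uv)$, two internally disjoint $u$--$v$ paths each have length at least $2$, so their union is a cycle of length at least $4$ on which $u$ and $v$ are nonadjacent, making $uv$ a chord; conversely, a chord $uv$ of a cycle $C$ satisfies $uv \notin E(C)$, so the two $u$--$v$ arcs of $C$ are internally disjoint paths in $G - uv$. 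One small remark: you do not need Menger's theorem anywhere --- the equivalence between ``$u$ and $v$ lie on a common cycle of $G-uv$'' and ``$G - uv$ contains two internally disjoint $u$--$v$ paths'' is immediate in both directions (take the two arcs of the cycle, respectively the union of the two paths), so the appeal to Menger is harmless but superfluous.
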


Every graph $G$ with $\delta(G) \geq 3k-1$ also satisfies $\sigma_2(G) \geq 6k-2$. Therefore, Theorem~\ref{main} is a refinement of both Theorem~\ref{FinkelT} and Corollary \ref{CFGLC}. Two other immediate corollaries of Theorem~\ref{main} are listed here.

\begin{cor}\label{indep cor}
 For $k \geq 2$,
let $G$ be a graph with $|G| \geq 4k$, $\sigma_2(G) \geq 6k-2$, and $\alpha(G) \leq n-3k$. Then $G$ contains $k$ disjoint chorded cycles.
\end{cor}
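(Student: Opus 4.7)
The plan is to derive this as a direct corollary of Theorem~\ref{main}: under the hypotheses of Corollary~\ref{indep cor}, the graph $G$ satisfies $|G| \geq 4k$ and $\sigma_2(G) \geq 6k-2$, so by Theorem~\ref{main} either $G$ contains $k$ disjoint chorded cycles, or $G \in \{G_1(n,k), G_2(k)\}$. It therefore suffices to show that the independence-number assumption $\alpha(G) \leq n - 3k$ excludes both exceptional graphs.

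For $G_1(n,k) = K_{3k-1,n-3k+1}$ with $n \geq 6k-2$, the larger part has size $n - 3k + 1 \geq 3k - 1$, so $\alpha(G_1(n,k)) = n - 3k + 1 > n - 3k$. For $G_2(k) = K_{3k-2,3k-2,1}$, we have $n = 6k - 3$ and $\alpha(G_2(k)) = 3k - 2 = (6k-3) - 3k + 1 = n - 3k + 1 > n - 3k$. Thus in both cases the independence number strictly exceeds $n - 3k$, contradicting the hypothesis.

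Consequently, under the assumptions of the corollary, $G$ cannot be one of the exceptional graphs, so Theorem~\ref{main} forces $G$ to contain $k$ disjoint chorded cycles. There is no substantive obstacle here: the entire content is the bookkeeping check that each exceptional graph has an independent set of size at least $n - 3k + 1$, which is immediate from its multipartite structure.
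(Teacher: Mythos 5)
Your proposal is correct and matches the paper's (implicit) argument exactly: the paper presents this as an immediate corollary of Theorem~\ref{main}, the entire content being that $\alpha(G_1(n,k)) = n-3k+1$ and $\alpha(G_2(k)) = 3k-2 = n-3k+1$ both exceed $n-3k$, so the hypothesis excludes the exceptional graphs. Your computations of both independence numbers are accurate, so there is nothing to add.
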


Every graph $G$ with $\sigma_2(G) \geq 6k-2$ also satisfies $\alpha(G) \leq n-3k+1$. So, requiring $\alpha(G) \leq n-3k$ in Corollary~\ref{indep cor} is equivalent to requiring the seemingly weaker condition $\alpha(G) \neq n-3k+1$.

\begin{cor} 
For $k \geq 2$,
let $G$ be a graph with $4k \leq |G| \leq 6k-4$ and $\sigma_2(G) \geq 6k-2$. Then $G$  contains $k$ disjoint chorded cycles.
\end{cor}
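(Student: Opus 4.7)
The plan is to derive this corollary directly from Theorem~\ref{main} by checking that neither exceptional graph fits in the order window $[4k,6k-4]$. Concretely, suppose for contradiction that $G$ has $4k \leq |G| \leq 6k-4$ and $\sigma_2(G) \geq 6k-2$ but contains no $k$ disjoint chorded cycles. Since the hypotheses $|G| \geq 4k$ and $\sigma_2(G) \geq 6k-2$ of Theorem~\ref{main} are satisfied, the theorem forces $G \in \{G_1(n,k),G_2(k)\}$.

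Now I just verify that both exceptional graphs are too large. From Definition~\ref{G1}, $G_1(n,k) = K_{3k-1,n-3k+1}$ is only defined for $n \geq 6k-2$, so $|G_1(n,k)| \geq 6k-2 > 6k-4$. Similarly, $G_2(k) = K_{3k-2,3k-2,1}$ has $|G_2(k)| = 2(3k-2) + 1 = 6k-3 > 6k-4$. Either possibility contradicts the assumption $|G| \leq 6k-4$, so $G$ must in fact contain $k$ disjoint chorded cycles.

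There is no real obstacle here: once Theorem~\ref{main} is in hand, the corollary is an essentially one-line arithmetic check reflecting the fact that the two extremal families only begin to exist once $n$ reaches $6k-3$. The point of stating this corollary is precisely to highlight that, below the threshold where the sharpness examples appear, the Ore-degree condition $\sigma_2(G) \geq 6k-2$ alone (one less than the bound in Corollary~\ref{CFGLC}) suffices to guarantee $k$ disjoint chorded cycles.
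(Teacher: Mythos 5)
Your proof is correct and is exactly the paper's intended argument: the authors state this as an immediate corollary of Theorem~\ref{main}, relying on the same observation that $G_1(n,k)$ requires $n \geq 6k-2$ and $|G_2(k)| = 6k-3$, so neither exceptional graph can satisfy $|G| \leq 6k-4$. Nothing is missing.
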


\subsection{Outline}

We present our result as follows.  In Section \ref{setup}, we detail the setup of our proof and present several important lemmas that will be used throughout our paper.  In particular, we find and choose an `optimal' collection of $k - 1$ disjoint cycles, and use $R$ to denote the subgraph induced by the vertices outside our collection.  Then, in Section~\ref{R ne P}, we consider the case when $R$ does not have a spanning path, and, in Section~\ref{long path}, we consider the case when $R$ has a spanning path.  We conclude our paper in Section \ref{remarks} with some remarks on further extensions.

\section{Setup and Preliminaries}\label{setup}

\subsection{Notation}

Let $G$ be a graph, and let $A, B \subseteq V(G)$, not necessarily disjoint.  We define $\|A,B\| := \sum\limits_{a \in A} |N_G(a) \cap B|$.  When $A = \{a\}$ or $A$ is the vertex set of some subgraph $\mathcal A$, we will often replace $A$ in the above notation with $a$ or $\mathcal A$, respectively.  Additionally, if $\mathcal{L}$ is a collection of graphs, then $\|A, \mathcal{L}\| = \|A, \bigcup\limits_{L \in \mathcal{L}} V(L)\|$.  If $A$ is the vertex set of some subgraph $\mathcal A$, we will write $G[\mathcal A]$ for $G[A]$, the subgraph of $G$ induced by the vertices of $\mathcal A$.  Furthermore, if $\mathcal B$ is a subgraph of $G$ with vertex set $B$, we will use $\mathcal A\setminus \mathcal B$ to denote $G[A\setminus B]$, and if $B = \{b_1,\dots, b_k\}$ and $k$ is small, we will also use $\mathcal A - b_1 - \dots - b_k$.  For a vertex $v$, we additionally write $\mathcal A + v$ for $G[A \cup \{v\}]$.

If $P = v_1 \dotsc v_m$ is a path, then
for $1 \le i \le j \le m$, $v_iPv_j$ is the path $v_i \dotsm v_j$.
An $n$-cycle is a cycle with $n$ vertices. A singly chorded cycle is a cycle with precisely one chord, and  a doubly chorded cycle is a cycle with at least two chords.

\subsection{Setup}

We let $k \geq 2$ and consider a graph $G'$ on $n$ vertices such that $n \ge 4k$ and $\sigma_2(G')=6k-2$, where  $G'$ does not contain $k$ disjoint chorded cycles. 
We then let $G$ be a graph with vertex set $V(G')$ such that 
$E(G') \subseteq E(G)$ and $G$ is ``edge-maximal'' in the
sense that, for any $e \in E(\overline{G})$, $G+e$ does contain $k$ disjoint chorded cycles.
We then prove that $G$ is $G_1(n,k)$ or $G_2(k)$, which
implies that $G = G'$, because
any proper spanning subgraph of $G_1(n,k)$ or $G_2(k)$
has minimum Ore-degree less than $6k - 2$.
Since we have already observed that $G_1(n,k)$ and $G_2(k)$ 
do not contain $k$ disjoint chorded cycles, this will prove Theorem~\ref{main}.

Note that $G \not\cong K_n$, else $G$ contains $k$ disjoint chorded cycles.  So there exists $e \in E(\overline{G})$, and by our edge-maximality condition, $G$ contains $k-1$ disjoint chorded cycles. 
Over all possible collections of $k-1$ disjoint chorded cycles in $G$, 
let $\Cc$ be such a collection which satisfies the following conditions 
when $R := G \setminus \Cc$:
\begin{enumerate}
\item[(O1)] the number of vertices in $\Cc$ is minimum,
\item[(O2)] subject to (O1), the total number of chords in the cycles of $\Cc$ is maximum, and
\item[(O3)] subject to (O1) and (O2), the length of the longest path in $R$ is maximum.
\end{enumerate}

We use the convention that $P$ is a longest path in $R$. 
Since $G[P]$ may have several paths spanning $V(P)$ and the endpoints of such paths will behave in a similar manner, we let
\begin{equation*}
  \Pp:= \{v \in V(P) : v \text{ is an endpoint of a path spanning $V(P)$}\}.
\end{equation*}

\subsection{Preliminary Results}

We begin with a number of observations about $G$ that follow directly from our setup. In the interest of readability, the observations in this paragraph will be used in the text without citation. Since $G$ does not contain $k$ disjoint chorded cycles, $R$ does not contain any chorded cycle, and for any $C \in \Cc$, $G[R \cup C]$ does not contain two disjoint chorded cycles.  If $p$ is an endpoint of $P$ and has a neighbor in $R\setminus P$, we can extend $P$.  Thus, $\|p,R\|=\|p,P\|$. If $\|p,P\| \geq 3$, then $G[P]$ contains a chorded cycle, so $\|p,R\| \leq 2$. Similarly, to avoid a chorded cycle in $R$, $\|q,P\| \leq 3$ and for any $v \in P$, $\|v,P\| \leq 4$. If $p$ has two neighbors in $P$, then $G[P]$ contains two distinct spanning paths.

An immediate corollary of (O1) is that, for any chorded cycle $C \in \mathcal C$,
no vertex of $C$ is incident to two chords; otherwise, we could replace $C$ with a chorded cycle on fewer vertices. We will assume this fact in the proof of the following lemma.

\begin{lemma}\label{RCedges}
Let $v \in R$ and $C \in \Cc$.
\begin{enumerate}[label=\textup{(\arabic*)}]
\item\label{4edgesL} If $\|v,C\| \geq 4$, then $\|v,C\|=4=|C|$,  and $G[C] \cong K_4$. 
\item\label{3edgesL} If $\|v,C\| =3$, then $|C| \in \{4,5,6\}$. Moreover:
\begin{enumerate}[label=\textup{(\alph*)}, ref=\alph*]
\item\label{3edgesL4} if $|C|=4$, then $C$ has a chord incident to the non-neighbor of $v$ (see Figure~\ref{c4v3});
\item\label{3edgesL5} if $|C|=5$, then $C$ is singly chorded, and the endpoints of the chord are disjoint from the neighbors of $v$ (see Figure~\ref{c5v3}); and
\item\label{3edgesL6} if $|C|=6$, then $C$ has three chords, with $G[C]\cong K_{3,3}$ and $G[C+v]\cong K_{3,4}$ (see Figure~\ref{c6v3}).
\end{enumerate}
\end{enumerate}
\end{lemma}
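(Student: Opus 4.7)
The overall strategy is to exploit the optimality conditions (O1) and (O2) by constructing, from $v$ and the vertices of $C$, a replacement chorded cycle that either uses strictly fewer vertices than $C$ (violating (O1)) or uses the same vertex set size but gives $\Cc$ strictly more chords in total (violating (O2)). I will freely use the standing consequence of (O1) noted before the lemma: no vertex of $C$ is incident to two chords of $C$.

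For part \ref{4edgesL}, I label the neighbors of $v$ on $C$ in cyclic order as $c_{i_1}, \dots, c_{i_j}$, producing $j \geq 4$ arcs of lengths $d_1, \dots, d_j$ summing to $|C|$. When $|C| \geq 5$, an averaging argument over the $j$ cyclically consecutive arc-pair sums (total $2|C|$) yields three cyclically consecutive neighbors $c_{i_p}, c_{i_{p+1}}, c_{i_{p+2}}$ such that $d_p + d_{p+1} \leq |C|-3$; the resulting cycle $v\, c_{i_p} \dotsm c_{i_{p+1}} \dotsm c_{i_{p+2}}\, v$ then has chord $v c_{i_{p+1}}$ and fewer than $|C|$ vertices, contradicting (O1). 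So $|C|=4$ and $v$ is adjacent to all of $C$; to force $G[C] \cong K_4$, if, say, $c_2 c_4 \notin E$ (so $C$ is chorded only by $c_1 c_3$), then the alternate $4$-cycle $c_1 c_2 v c_3 c_1$ carries both $v c_2$ and $c_2 c_3$ as chords, producing a net gain in chord count that contradicts (O2).

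For part \ref{3edgesL}, writing the three neighbors of $v$ cyclically as $c_{i_1}, c_{i_2}, c_{i_3}$ with arc lengths $d_1, d_2, d_3$, any arc of length $\geq 3$ yields, via the other two arcs and chord $v c_{i_2}$, a chorded cycle on fewer than $|C|$ vertices, contradicting (O1); hence each $d_t \leq 2$ and $|C|\leq 6$. Case \ref{3edgesL4} is closed by the same (O2)-swap used in part \ref{4edgesL}. In case \ref{3edgesL5}, the arc pattern $(1,1,3)$ already gives the chorded $4$-cycle $v c_{i_1} c_{i_2} c_{i_3} v$ contradicting (O1); for pattern $(1,2,2)$ with $v \sim c_1,c_2,c_4$, one verifies that each of the four potential chords $c_1 c_3, c_1 c_4, c_2 c_4, c_2 c_5$ produces a chorded $4$-cycle on $v$ and three vertices of $C$, so $c_3 c_5$ must be the unique chord.

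The main obstacle is case \ref{3edgesL6}. Arc patterns $(1,1,4)$ and $(1,2,3)$ are ruled out as above, leaving $v \sim c_1, c_3, c_5$. I eliminate every ``odd-odd'' chord among $c_1 c_3, c_3 c_5, c_5 c_1$ (for example $c_1 c_3 \in E$ would make $v c_1 c_2 c_3 v$ a chorded $4$-cycle) and every ``even-even'' chord among $c_2 c_4, c_4 c_6, c_6 c_2$ (for example $c_2 c_4 \in E$ would make $v c_3 c_4 c_2 c_1 v$ a chorded $5$-cycle via the unused $C$-edge $c_2 c_3$), both contradicting (O1). Finally, I argue that each ``diameter'' chord $c_1 c_4, c_2 c_5, c_3 c_6$ must be present: if, say, $c_3 c_6 \notin E$, then replacing $C$ by the $6$-cycle $v c_5 c_4 c_3 c_2 c_1 v$ produces a chorded cycle whose chord set contains $v c_3$ together with whichever of $c_1 c_4, c_2 c_5$ lie in $E$, hence strictly more chords than $C$ (whose chords are contained in $\{c_1 c_4, c_2 c_5\}$), contradicting (O2). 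Combined with the absence of all other possible edges inside $V(C) \cup \{v\}$, this forces $G[C] \cong K_{3,3}$ and $G[C+v] \cong K_{3,4}$, completing the proof.
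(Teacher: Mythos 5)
Your proof is correct and takes essentially the same route as the paper: (O1) is used to exclude shortcut chorded cycles through $v$ (your arc-averaging is the paper's observation that two cycle-adjacent non-neighbors of $v$ can be deleted), and (O2) swaps of the form $C \mapsto C - c + v$, preserving $|C|$ but gaining a chord, force the remaining structure, just as in the paper. One cosmetic slip: on the $4$-cycle $c_1c_2vc_3c_1$ the chords are $vc_1$ and $c_2c_3$ (not $vc_2$, which is a cycle edge), but since $G[\{v,c_1,c_2,c_3\}] \cong K_4$ the (O2) contradiction stands unchanged.
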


\begin{figure}[ht]
\begin{subfigure}[b]{.3\textwidth}
\centering
\begin{tikzpicture}
\foreach \x in {1,2,3,4}{
\draw (90*\x:.75cm) node[vertex](c\x){};}
\draw (c4) node[label=right:$C$]{};
\draw (c1)--(c2)--(c3)--(c4)--(c1)--(c3);
\draw (0,-2) node[vertex](v){};
\draw (v) node[label=right:$v$]{};
\draw (c2)--(v)--(c3) (v)--(c4);
\end{tikzpicture}
\caption{$|C|=4$, $\|v,C\|=3$}\label{c4v3}
\end{subfigure}
\begin{subfigure}[b]{.3\textwidth}
\centering
\begin{tikzpicture}
\foreach \x in {1,2,3,4,5}{
\draw (18+72*\x:.75cm) node[vertex](c\x){};}
\draw (c5) node[label=right:$C$]{};
\draw (c1)--(c2)--(c3)--(c4)--(c5)--(c1) (c2)--(c5);
\draw (0,-2) node[vertex](v){};
\draw (v) node[label=right:$v$]{};
\draw (c4)--(v)--(c3);
\draw (v) .. controls (-1,-1.5) and (-2,.75) .. (c1);
\end{tikzpicture}
\caption{$|C|=5$, $\|v,C\|=3$}\label{c5v3}
\end{subfigure}
\begin{subfigure}[b]{.3\textwidth}
\centering
\begin{tikzpicture}
\foreach \x in {1,2,3,4,5,6}{
\draw (30+60*\x:.75cm) node[vertex](c\x){};}
\draw (c6) node[label=right:$C$]{};
\draw (c1)--(c2)--(c3)--(c4)--(c5)--(c6)--(c3) (c2)--(c5) (c6)--(c1)--(c4);
\draw (0,-2) node[vertex](v){};
\draw (v) node[label=right:$v$]{};
\draw (c4)--(v);
\draw (v) to[out=135, in=225] (c2);
\draw (v) to[out=45, in=-45] (c6);
\end{tikzpicture}
\caption{$|C|=6$, $\|v,C\|=3$}\label{c6v3}
\end{subfigure}
\caption{Lemma~\ref{RCedges}\ref{3edgesL}}
\end{figure}

\begin{proof}
If there exist vertices $c_1,c_2 \in C$ that are adjacent along the cycle of $C$ such that $\|v,C-c_1-c_2\| \geq 3$, then $(C-c_1-c_2) + v$ contains a chorded cycle with strictly fewer vertices than $C$,  contradicting (O1). 
This proves that if $\|v, C\| = 3$, then $|C| \le 6$.  Similarly, if
$\|v, C\| \ge 4$, then $|C| = 4$ and $\|v, C\| = 4$.
If $\|v,C\|=4$ and $|C|=4$, then $v$ together with a triangle in $C$ give a doubly chorded 4-cycle, so by (O2), $G[C] \cong K_4$.

Suppose $\|v,C\|=3$. If $|C|=4$, then let $c \in C$ be the non-neighbor of $v$ in $C$. If $c$ is not incident to a chord, then $(C-c)+v$ gives a doubly chorded 4-cycle, preferable to $C$ by (O2). This proves~(\ref{3edgesL4}).

So $|C|\in\{5,6\}$.  Since the vertices in $V(C)\setminus N_G(v)$ cannot be adjacent along the cycle $C$, $C-c+v$ contains a chorded cycle $C'$ of the same length as $C$, for any $c \in V(C)\setminus N_G(v)$,
 If $c$ is not incident to a chord, then $C'$ has strictly more chords than $C$, violating (O2). So every vertex in $V(C)\setminus N_G(v)$ is incident to a chord.

If $|C|=6$, then $v$ is adjacent to every other vertex along the cycle, and every $c \in V(C)\setminus N_G(v)$ is incident to a chord. Since no vertex in $C$ is incident to two chords, (O1) implies (\ref{3edgesL6}). If $|C|=5$, then (O1) implies that the only possible chord has the two non-neighbors of $v$ as its endpoints, which proves (\ref{3edgesL5}).
\end{proof}

\begin{lemma}\label{C4bounds}
  Let $Q$ be a path in $R$ such that $|Q| \ge 4$ and let $C \in \Cc$.  
  If $F \subseteq V(Q)$ such that $|F| = 4$, then
 $\|F, C\| \le 12$.
 Furthermore, if $G[C] \cong K_4$ and there exists
an endpoint $v$ of $Q$ such that $\|v, C\| \ge 3$,
then $\|Q, C\| \le 12$ with $\|Q, C\| = 12$ only if $\|v, C\| = 4$.
\end{lemma}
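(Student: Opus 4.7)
The plan is to apply Lemma~\ref{RCedges}\ref{4edgesL}, which gives $\|u, C\| \le 4$ for every $u \in R$ with equality forcing $|C|=4$ and $G[C] \cong K_4$. If $|C| \neq 4$ or $G[C] \not\cong K_4$, every $f \in F$ satisfies $\|f, C\| \le 3$ and so $\|F, C\| \le 4 \cdot 3 = 12$ is immediate; thus only the case $|C| = 4$ and $G[C] \cong K_4$ requires work.

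For the first claim in that case I argue by contradiction, supposing $\|F, C\| \ge 13$. Pigeonhole produces a vertex $f^* \in F$ with $\|f^*, C\| = 4$ (otherwise every term is at most $3$ and the sum is at most $12$) and at least three vertices $f^*, f_2, f_3 \in F$ with $\|f_i, C\| \ge 3$ (otherwise at least two terms are at most $2$, so the total is at most $2\cdot 4 + 2\cdot 2 = 12$). The aim is to exhibit two disjoint chorded $4$-cycles in $G[F \cup V(C)]$, since together with the $k-2$ chorded cycles in $\Cc \setminus \{C\}$ they supply $k$ disjoint chorded cycles and contradict our hypothesis on $G$. Because $f^*$ is joined to all of $V(C)$, any choice of three $c$-vertices together with $f^*$ induces a $K_4$ and thus is doubly chorded; I select the remaining vertex $c_l$ to be a common neighbor of $f_2$ and $f_3$, which is possible because each of $f_2, f_3$ misses at most one vertex of $V(C)$, so they share at least two common neighbors in $V(C)$. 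The leftover set $\{f_2, f_3, f_4, c_l\}$ then contains the triangle $f_2 c_l f_3$, and a chorded $4$-cycle is built by using an edge among the $f_i$'s inherited from the path $Q$ together with the lower bound on $\|f_4, C\|$ forced by $\|F, C\| \ge 13$. The main obstacle is a sub-case in which no helpful edge among the $f_i$'s is available; this is handled by reselecting the partition of $F \cup V(C)$ or by locating a chorded cycle in $R \setminus F$ that becomes available once the partial swap into $C_1$ has been made.

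For the furthermore, the hypothesis gives access to optimality condition (O3). Let $c_a$ denote the unique non-neighbor of $v$ in $V(C)$ when $\|v, C\| = 3$, and any vertex of $V(C)$ when $\|v, C\| = 4$. Then $C'' := G[\{v\} \cup (V(C) \setminus \{c_a\})]$ is a $K_4$, and $\Cc'' := (\Cc \setminus \{C\}) \cup \{C''\}$ ties $\Cc$ on (O1) and (O2); hence by (O3) the longest path in $R'' := R - v + c_a$ has at most $|P|$ vertices. The path $Q - v$ sits inside $R''$, and every neighbor of $c_a$ on $Q - v$ gives a candidate extension of $Q - v$ through $c_a$. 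By chasing the possible choices of $c_a$ (all four vertices of $V(C)$ when $\|v, C\| = 4$, but only the non-neighbor of $v$ when $\|v, C\| = 3$) and translating each edge from $Q - v$ to $V(C)$ into such a potential extension, I derive the bound $\|Q, C\| \le 12$, with strict improvement $\|Q, C\| \le 11$ when $\|v, C\| = 3$ because the choice of $c_a$ is then forced. The main obstacle is carrying out the path-extension accounting precisely, particularly when the relevant neighbors of $c_a$ lie at interior rather than terminal vertices of $Q$.
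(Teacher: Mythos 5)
There is a genuine gap in both halves of your argument. For the first claim, you confine the search for two disjoint chorded cycles to $G[F \cup V(C)]$ and plan to close the second chorded $4$-cycle with ``an edge among the $f_i$'s inherited from the path $Q$.'' But $F$ is an arbitrary $4$-subset of $V(Q)$: its vertices need not be consecutive on $Q$, so $G[F]$ may well be edgeless, and then no partition of the eight vertices can succeed. Concretely, let $G[C] \cong K_4$ on $\{c_1,c_2,c_3,c_4\}$, let $f^*$ be adjacent to all of $C$, let $f_2,f_3,f_4$ each be adjacent to exactly $\{c_1,c_2,c_3\}$, and let $F = \{f^*,f_2,f_3,f_4\}$ be independent. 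Then $\|F,C\| = 13$, yet $G[F \cup V(C)]$ contains no two disjoint chorded cycles: any two such cycles would have to partition all eight vertices into two chorded $4$-cycles, and the part containing $c_4$ always has a vertex of degree at most one inside it (if $f^*$ is absent that vertex is $c_4$; if $f^*$ is present it is the other $f_i$, which is adjacent neither to $c_4$ nor to $f^*$). So ``reselecting the partition of $F \cup V(C)$'' cannot work, and your alternative fallback is not an argument. The paper's proof escapes precisely by using the segments of $Q$ \emph{between} the members of $F$: ordering $F$ as $u_1,u_2,u_3,u_4$ along $Q$, it picks $c \in C$ with $\|c,F\| \ge 4$ and $i \in \{1,4\}$ with $\|u_i,C\| \ge 3$ (possible since $\|\{u_1,u_4\},C\| \ge 13 - 8 = 5$); then $Q - u_i + c$ contains a chorded cycle riding the path between the other three $F$-vertices with $c$ as a chord-vertex, while $C - c + u_i$ contains a second, disjoint chorded cycle, contradicting the fact that $G$ has no $k$ disjoint chorded cycles.

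The ``furthermore'' part of your proposal is also broken, for a structural reason: (O3) constrains only the \emph{longest} path in $R$, while $Q$ is an arbitrary path. After your swap $C'' = C - c_a + v$, extending $Q - v$ through $c_a$ yields a path on $|Q|$ vertices, which is no longer than $P$; hence no violation of (O3) ever arises, and the ``path-extension accounting'' you defer can never produce the bound (even when $Q = P$, the extension only restores length $|P|$). The paper's proof needs no swap and no optimality condition beyond what defines $\Cc$: for each $c \in C$, if $\|c, Q - v\| \ge 3$, then $Q - v + c$ contains a chorded cycle ($c$ has three neighbors on the path $Q - v$, using that $v$ is an endpoint so $Q - v$ is a path), and $C - c + v$ contains another, since $v$ has at least two neighbors on the triangle $C - c$; these are disjoint, a contradiction. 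Hence $\|c, Q - v\| \le 2$ for all four vertices $c \in C$, giving $\|Q,C\| = \|v,C\| + \|Q - v, C\| \le \|v,C\| + 8 \le 12$, with equality forcing $\|v,C\| = 4$. Note that your claimed refinement $\|Q,C\| \le 11$ when $\|v,C\| = 3$ is in fact an immediate consequence of this correct argument, but your route to it is not valid.
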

\begin{proof}
  Assume $\|F, C\| \ge 13$ for some $F \subseteq V(Q)$, $|F| = 4$, and  let $u_1, u_2, u_3, u_4$ be the vertices of $F$ in the order they appear on the path $Q$.
  By Lemma~\ref{RCedges}, $G[C] \cong K_4$, so
  there exists $c \in C$ such that $\|c, F\| \ge 4$.
   Since $\|\{u_1, u_4\}, C\| \ge 5$, there exists $i \in \{1, 4\}$ such
  that $\|u_i, C\| \ge 3$.
  So $Q - u_i + c$ and $C - c + u_i$ both contain chorded cycles, a contradiction.

  To prove the second statement, suppose $G[C]\cong K_4$ and let $v$ be an endpoint of $Q$ such that $\|v,C\| \ge 3$. Note that for every $c \in C$,
  $C - c + v$ and $Q - v + c$ both contain chorded cycles if 
  $\|c, Q - v\| \ge 3$.  Thus, $\|Q,C\| \le 12$, and furthermore,  if $\|Q, C\| = 12$, then $\|c, Q\| = 3$ and $\|c, v\| = 1$
  for every $c \in C$.
\end{proof}

\begin{lemma}\label{noC5}
  If $C \in \Cc$ and $\|v_1, C\|, \|v_2, C\| \ge 3$ for distinct $v_1, v_2 \in R$, then $|C| \in \{4, 6\}$.
\end{lemma}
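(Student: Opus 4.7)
The plan is to proceed by contradiction, ruling out $|C|=5$; the other values $|C|\in\{4,6\}$ will then fall out of Lemma~\ref{RCedges}, which already restricts $|C|$ to $\{4,5,6\}$ under the hypothesis (part \ref{4edgesL} handles the case $\|v_i,C\|\ge 4$ by forcing $|C|=4$, and part \ref{3edgesL} handles $\|v_i,C\|=3$). So I would assume $|C|=5$ and $\|v_1,C\|=\|v_2,C\|=3$.

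By Lemma~\ref{RCedges}\ref{3edgesL5}, $C$ then carries a unique chord whose two endpoints are, for each $i \in \{1,2\}$, precisely the two non-neighbors of $v_i$ in $C$. This pins down the structure: $v_1$ and $v_2$ have the same three neighbors on $C$, say $N_C(v_1)=N_C(v_2)=\{a_1,a_2,a_3\}$. The next step is a small observation on a 5-cycle: the two chord endpoints are non-adjacent along $C$, so the three remaining vertices $a_1,a_2,a_3$ cannot be placed mutually non-consecutively—some pair, say $a,a'$, must be consecutive along $C$, so that $aa'\in E(G)$.

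Armed with this, I would form the 4-cycle $v_1\,a\,v_2\,a'\,v_1$ using only edges guaranteed by $\{a,a'\}\subseteq N_C(v_1)\cap N_C(v_2)$, with $aa'$ serving as a chord. Replacing $C$ in $\mathcal C$ by this chorded 4-cycle yields a family $\mathcal C'$ of $k-1$ pairwise disjoint chorded cycles (disjointness from $\mathcal C\setminus\{C\}$ is automatic because $v_1,v_2\in R$ and $a,a'\in V(C)$) whose vertex count is $|V(\mathcal C)|-5+4=|V(\mathcal C)|-1$, contradicting (O1). I do not foresee any real obstacle; the only step requiring care is the combinatorial observation that on a 5-cycle, three vertices whose two complementary vertices are non-consecutive must themselves include a consecutive pair.
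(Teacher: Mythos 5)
Your proposal is correct and takes essentially the same route as the paper: the paper also reduces to $|C|=5$ via Lemma~\ref{RCedges}, notes $N_C(v_1)=N_C(v_2)$ with two adjacent vertices $c,c'$ in this common neighborhood, and derives the chorded $4$-cycle $v_1cv_2c'v_1$ contradicting (O1). Your extra remarks (pinning down the consecutive pair via the chord endpoints, and checking disjointness of the replacement) are just slightly more detailed versions of the same steps.
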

\begin{proof}
  If $C \notin \{4, 6\}$, then $|C| = 5$ and $N_C(v_1) = N_C(v_2)$, by Lemma~\ref{RCedges}.
  Furthemore, Lemma~\ref{RCedges}, implies that there are two adjacent vertices 
  $c, c' \in N_C(v_1)  = N_C(v_2)$, but then
  $v_1cv_2c'v_1$ is a chorded cycle contradicting (O1).
\end{proof}

In the following sections, we will often show that every $C$ in $\Cc$ is a 6-cycle.  Furthermore, it will often be the case that there exists some $u \in R$ such that $\|u,C\| = 3$ for every $C \in \Cc$.  The following lemma will be useful in considering the neighbors of $u$ in $R$ and their adjacencies in $C$.

\begin{lemma}\label{uvnhd}
Let  $C \in \Cc$ with $|C| = 6$, and let $u,v \in R$ such that $uv \in E(G)$.  If $\|u,C\| = 3$ and $\|v,C\| \ge 1$, then $N_C(u) \cap N_C(v) = \emptyset$.
\end{lemma}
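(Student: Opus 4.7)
The plan is to argue by contradiction, exploiting the rigid structure supplied by Lemma~\ref{RCedges}\ref{3edgesL6}. Because $|C| = 6$ and $\|u, C\| = 3$, that lemma says $G[C] \cong K_{3,3}$ and $G[C+u] \cong K_{3,4}$; in particular $N_C(u)$ is exactly the part of size $3$ in the bipartition of $C$. I will label this part $A = \{a_1, a_2, a_3\}$ and the other part $B = \{b_1, b_2, b_3\}$, so that every $A$--$B$ pair is adjacent and there are no edges inside $A$ or inside $B$.

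Assume for contradiction that some vertex $c$ lies in $N_C(u) \cap N_C(v)$; by relabelling, I may take $c = a_1$. The strategy is then to exhibit a chorded cycle $C'$ on strictly fewer than $|C| = 6$ vertices inside $G[C \cup \{u, v\}]$. Because $C'$ is disjoint from every other member of $\Cc$, the family $(\Cc \setminus \{C\}) \cup \{C'\}$ would then be a collection of $k-1$ disjoint chorded cycles in $G$ with fewer total vertices than $\Cc$, contradicting the minimality condition (O1).

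The cycle I would use is $C' := u \, v \, a_1 \, b_1 \, a_2 \, u$. All five cycle-edges are present: $uv$ by hypothesis, $v a_1$ by the contradiction assumption, $a_1 b_1$ and $b_1 a_2$ because $G[C] \cong K_{3,3}$, and $a_2 u$ because $a_2 \in N_C(u)$. The edge $u a_1$ is also present (since $a_1 \in N_C(u)$) but does not lie on this $5$-cycle, so it serves as a chord. Thus $C'$ is a chorded $5$-cycle, yielding the desired contradiction.

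I do not anticipate any serious obstacle. The whole proof is a single $K_{3,3}$-picture together with the observation that a triangle $u v a_1$ shares the vertex $a_1$ with the bipartite graph. The symmetry of $K_{3,3}$ makes the specific choice of $a_2$ and $b_1$ irrelevant, and no case analysis on the actual value of $\|v, C\|$ is needed beyond the hypothesis $\|v, C\| \ge 1$, since only a single common neighbor is required to build $C'$.
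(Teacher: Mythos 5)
Your proof is correct and takes essentially the same approach as the paper: both use Lemma~\ref{RCedges} to identify $N_C(u)$ with a partite set of $G[C] \cong K_{3,3}$ and then, assuming a common neighbor $a_1$, exhibit the chorded $5$-cycle $uva_1b_1a_2u$ with chord $ua_1$ (the paper writes the same cycle reversed, $ua_2b_1a_1vu$), contradicting (O1).
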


\begin{proof}
  By Lemma \ref{RCedges}, we may assume that $A = \{a_1,a_2,a_3\}$ and $B = \{b_1,b_2,b_3\}$ are the partite sets of $G[C] \cong K_{3,3}$ with $N_C(u) = A$.  Suppose on the contrary that $va_1 \in E(G)$.  Then $ua_2b_1a_1vu$ is a 5-cycle with chord $ua_1$.  This contradicts (O1).
\end{proof}

\begin{lemma}\label{P2P3}
Suppose $H$ is a graph with no chorded cycle.  Let $U$ and $W$ be two disjoint paths in $H$ and let $u_1$ and $u_2$ be the endpoints of $U$.  Then $\|\{u_1,u_2\},W\| \leq 3$.  If equality holds, then $u_1 \neq u_2$ and for some $i \in [2]$, $\|u_i,W\|=2$ and $\|u_{3-i},W\|=1$, with the neighbor of $u_{3-i}$ strictly between the neighbors of $u_i$ on $W$; in addition, $\|U,W\| = 3$.
\end{lemma}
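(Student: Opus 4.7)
The plan is to argue by contradiction throughout, exhibiting a chorded cycle in $H$ whenever the claimed inequality is violated. Write $W = w_1 w_2 \cdots w_k$, and, when $u_1 \neq u_2$, write $U = u_1 x_1 \cdots x_m u_2$ with $m \ge 0$. A preliminary per-endpoint bound is $\|u_i, W\| \le 2$ for each $i$: three neighbors of $u_i$ on $W$ with indices $\alpha < \beta < \gamma$ would give the cycle $u_i w_\alpha w_{\alpha+1} \cdots w_\gamma u_i$ with $u_i w_\beta$ as a chord. In particular, if $u_1 = u_2$ then $\|\{u_1,u_2\}, W\| \le 2$, so the equality case of the lemma already forces $u_1 \neq u_2$; I assume this below.

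\medskip

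To establish $\|\{u_1,u_2\}, W\| \le 3$, I would suppose $\|u_1, W\| = \|u_2, W\| = 2$, with $N_H(u_1) \cap W = \{w_a, w_b\}$ ($a<b$) and $N_H(u_2) \cap W = \{w_c, w_d\}$ ($c<d$), and WLOG $a \le c$. The key construction combines one $W$-edge from each $u_i$ with the path $U$ to form a cycle, leaving a second $W$-edge available as a chord. If $b \le d$, take $\Gamma := u_1 w_a w_{a+1} \cdots w_d u_2 U u_1$: the cycle-neighbors of $u_1$ on $\Gamma$ are $w_a$ and the $U$-neighbor of $u_1$, so $u_1 w_b$ is a chord. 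If instead $b > d$ (so $a \le c < d < b$), take $\Gamma' := u_1 w_b w_{b-1} \cdots w_c u_2 U u_1$: the cycle-neighbors of $u_2$ on $\Gamma'$ are $w_c$ and the $U$-neighbor of $u_2$, so $u_2 w_d$ is a chord. Either case contradicts that $H$ has no chorded cycle.

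\medskip

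For the equality clause, the preceding observations force (WLOG) $\|u_1, W\| = 2$ and $\|u_2, W\| = 1$; write $N_H(u_1) \cap W = \{w_a, w_b\}$ ($a<b$) and $N_H(u_2) \cap W = \{w_c\}$. A short case check on $c$ establishes $a < c < b$: if $c < a$, the cycle $u_2 w_c w_{c+1} \cdots w_b u_1 U u_2$ has chord $u_1 w_a$; if $c > b$, the cycle $u_1 w_a w_{a+1} \cdots w_c u_2 U u_1$ has chord $u_1 w_b$; and if $c = a$ (resp.\ $c = b$), then $u_1 w_b w_{b-1} \cdots w_a u_2 U u_1$ (resp.\ $u_1 w_a w_{a+1} \cdots w_b u_2 U u_1$) uses $u_2 w_c$ as a cycle edge and has $u_1 w_a$ (resp.\ $u_1 w_b$) as a chord.

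\medskip

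Finally, to show $\|U, W\| = 3$, suppose an internal vertex $x_j$ of $U$ has a neighbor $w_e \in W$. The cycle-neighbors of $x_j$ on any cycle that contains $x_j$ as an interior vertex of $U$ lie in $V(U)$, hence are distinct from $w_e$. If $a \le e \le c$, then $x_j w_e$ is a chord of $u_1 w_a w_{a+1} \cdots w_c u_2 U u_1$; if $c \le e \le b$, it is a chord of $u_1 w_b w_{b-1} \cdots w_c u_2 U u_1$. If $e < a$, the cycle $x_j w_e w_{e+1} \cdots w_b u_1 U x_j$ has $u_1 w_a$ as a chord, since $w_a$ lies on the cycle and the cycle-neighbors of $u_1$ are $w_b$ and $x_1$; the case $e > b$ is symmetric, using $x_j w_e w_{e-1} \cdots w_a u_1 U x_j$ with $u_1 w_b$ as a chord. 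Each subcase yields a forbidden chorded cycle, so no such $x_j$ exists and $\|U, W\| = \|u_1, W\| + \|u_2, W\| = 3$. The main obstacle is choosing the orientation in this last step so that $u_1$'s two $W$-neighbors always straddle its cycle-neighbors, leaving exactly one of $u_1 w_a, u_1 w_b$ available as a chord; once this template is set up, each subcase resolves directly.
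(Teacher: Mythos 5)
Your proposal is correct and takes essentially the same approach as the paper: both arguments bound each endpoint's degree to $W$ by $2$, then exhibit explicit chorded cycles (an endpoint-to-endpoint cycle through a $W$-segment plus $U$, with a leftover edge as chord) to rule out the $2{+}2$ case, force the betweenness condition, and eliminate edges from internal vertices of $U$. The only difference is organizational—the paper reuses its endpoint argument verbatim for internal vertices ("replacing $u_2$ with $v$"), while you redo the case analysis directly—which does not change the substance.
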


\begin{proof}
Let $W = w_1w_2\dots w_t$ for some $t \ge 1$. $\|u_1,W\| \le 2$ and $\|u_2,W\| \le 2$, as $H$ does not contain a chorded cycle.  Thus, if $\|\{u_1,u_2\},W\| \ge 3$, we may assume that $u_1 \neq u_2$, and, without loss of generality, that $\|u_1,W\| = 2$ and $\|u_2,W\| \ge 1$.  Suppose $u_1w_i, u_1w_j \in E(H)$ such that $i < j$, and let $u_2w_\ell \in E(H)$ for some $\ell$.

If $\ell \le i$, then $w_\ell Ww_ju_1Uu_2w_\ell$ is a  cycle with chord $u_1w_i$.  If $\ell \ge j$, then $w_iWw_\ell u_2Uu_1w_i$ is a cycle with chord $u_1w_j$.  Thus, the neighbors of $u_2$ in $W$ are internal vertices of the path $w_iWw_j$.  If $\|u_2,W\| = 2$, then suppose $\ell$ is the largest index such that $u_2w_\ell \in E(H)$.  However, $w_iWw_\ell u_2Uu_1w_i$ is a cycle containing a chord incident to $u_2$.  So $\|u_2,W\| = 1$.

Now if $v$ is an internal vertex on $U$ such that $vw_m \in E(H)$, then by replacing $u_2$ with $v$, we deduce that $i \le m \le j$.  If $m \le \ell$, then $w_iWw_\ell u_2Uu_1w_i$ is a cycle with chord $vw_m$, and if $m > \ell$, then $w_\ell Ww_j u_1Uu_2w_j$ is a cycle with chord $vw_m$.  This proves the lemma.
\end{proof}

\section{Suppose $V(R) \neq V(P)$.}\label{R ne P}
In this section, we make the assumption that $V(R) \neq V(P)$.  That is, there exists some vertex $v \in R\setminus P$.  In addition, we will use the convention that $p$ and $p'$ are the endpoints of $P$, and $q$ (resp. $q'$) is the neighbor of $p$ (resp. $p'$) on $P$.  By the maximality of $P$, $vp \notin E(G)$ so that $d_G(v) + d_G(p) \ge 6k - 2$.  Similarly for $v$ and $p'$.  

Our aim is to show that $G = G_1(n,k)$, which is a complete bipartite graph.  To aid us, we define a set of vertices $T := \{v \in R: d_R(v) = 2\}$.  We will
show that $T$ is contained in one of the partite sets of $G_1(n,k)$.


\begin{lemma}\label{vp}
If $v \in R\setminus P$, then $\|\{v,p\},C\| \leq 6$ for every $C \in \Cc$, with equality only if 
\begin{enumerate}
\item[\textup{(i)}] $|C| \in \{4,6\}$ and $N_C(v)=N_C(p)$, or 
\item[\textup{(ii)}] $\|p,C\|=|C|=4$. \end{enumerate}
\end{lemma}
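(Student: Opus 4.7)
The plan is to argue by contradiction using the optimality conditions (O1)--(O3). The core tool is a \emph{vertex-swap}: given $c \in V(C)$ with $cp \in E(G)$, if $(V(C) \setminus \{c\}) \cup \{v\}$ induces a subgraph containing a chorded cycle $C'$ spanning all of its vertices with at least as many chords as $C$, then $\Cc' := (\Cc \setminus \{C\}) \cup \{C'\}$ matches $\Cc$ with respect to (O1) and (O2), while $R' := R - v + c$ contains the path $c p q \cdots p'$ on $|P|+1$ vertices, contradicting (O3). Note that $vp \notin E(G)$ by the maximality of $P$.

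To prove $\|\{v,p\}, C\| \leq 6$, suppose the sum is at least $7$. By Lemma~\ref{RCedges}, each of $\|v,C\|, \|p,C\|$ is at most $4$, so one of them equals $4$; this forces $|C|=4$ and $G[C] \cong K_4$, while the other is at least $3$. In every such configuration I find $c \in N_C(p)$ with $v$ adjacent to all of $V(C) \setminus \{c\}$: if $\|p,C\|=4$, take $c$ to be the unique non-neighbor of $v$ in $C$ when $\|v,C\|=3$, or any vertex of $C$ when $\|v,C\|=4$; if instead $\|v,C\|=4$ and $\|p,C\|=3$, any $c \in N_C(p)$ works. Then $(C-c)+v \cong K_4$ has the same two chords as $C$, and the swap contradicts (O3).

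For equality, the pair $(\|p,C\|, \|v,C\|)$ is one of $(4,2), (2,4), (3,3)$. The case $(4,2)$ is precisely outcome~(ii). In case $(2,4)$, $G[C] \cong K_4$, so choosing any $c \in N_C(p)$ yields $(C-c)+v \cong K_4$ and the swap contradicts (O3). In case $(3,3)$, Lemma~\ref{noC5} forces $|C| \in \{4,6\}$, and it remains to show $N_C(v) = N_C(p)$, which gives outcome~(i). For $|C|=6$, Lemma~\ref{RCedges} forces $G[C] \cong K_{3,3}$ with $N_C(v)$ and $N_C(p)$ each equal to one of the two parts; if those parts differ, picking $b \in N_C(p)$ makes $(C-b)+v$ an induced $K_{3,3}$ with bipartition $N_C(v)$ versus $(V(C) \setminus N_C(v) \setminus \{b\}) \cup \{v\}$, and the swap applies. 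For $|C|=4$, let $c_v^*, c_p^*$ denote the non-neighbors of $v, p$ in $C$; if $c_v^* = c_p^*$ then $N_C(v) = N_C(p)$, so assume $c_v^* \neq c_p^*$. By Lemma~\ref{RCedges}, chords incident to each of $c_v^*, c_p^*$ exist, so either $c_v^*, c_p^*$ are diagonal on $C$ (with the single chord $c_v^* c_p^*$) or they are adjacent on $C$ and both diagonals are present (so $G[C] \cong K_4$). In both sub-configurations the induced subgraph on $(V(C) \setminus \{c_v^*\}) \cup \{v\}$ contains a chorded $4$-cycle with as many chords as $C$, and since $c_v^* \in N_C(p)$, the swap contradicts (O3).

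The principal obstacle is this $|C|=4$ subcase of $(3,3)$: the chord count of $C$ depends on whether $c_v^*, c_p^*$ are diagonal or adjacent, so one must verify in both configurations that $(C-c_v^*)+v$ retains enough chords to preserve (O2). This is a short but careful verification powered by Lemma~\ref{RCedges}.
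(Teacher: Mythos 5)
Your proof is correct and takes essentially the same approach as the paper's: both arguments rule out every bad configuration by swapping $v$ into $C$ in place of a vertex $c \in N_C(p)$ (with Lemmas~\ref{RCedges} and \ref{noC5} guaranteeing the replacement cycle preserves (O1) and (O2)) and then extending $P$ through $c$ to contradict (O3). Your explicit chord-count check in the $|C|=4$, $(3,3)$ subcase is just a more detailed rendering of the paper's observation that $G[N_C(v)\cup\{v\}]$ is a chorded $4$-cycle with at least as many chords as $C$.
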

\begin{proof}
Suppose $v \in R\setminus P$ and $\|\{v,p\},C\| \geq 6$ for some $C \in \Cc$.  If $\|\{v,p\}, C\| \ge 7$, then either $\|v,C\| = 4$ or $\|p,C\| = 4$, so that $G[C]\cong K_4$ by Lemma~\ref{RCedges}. If $\|v,C\|=4$, then $\|p,C\|=0$, lest we extend $P$ by adding a neighbor of $p$ in $C$, and replace said neighbor in $C$ with $v$, violating (O3). If $\|p,C\|=4$, then $\|v,C\| \leq 2$, else there exists $c \in C$ such that $C - c + v \cong K_4$, and we can extend $P$ by adding $c$, violating (O3).  So, $\|\{v,p\},C\|\le 6$, and if equality holds, then either (ii) occurs, or $\|v,C\|=\|p,C\|=3$.  We may assume $\|v,C\|= \|p,C\| = 3$, so that $|C| \in \{4,5,6\}$ by Lemma~\ref{RCedges}.

By Lemma~\ref{noC5}, $|C| \in \{4,6\}$.  Suppose $|C|=4$ and $\|v,C\|=\|p,C\|=3$. Note that $G[N_C(v)\cup \{v\}]$ forms a chorded 4-cycle with at least the same number of chords as $C$. If $p$ is adjacent to the vertex in $V(C)\setminus N_G(v)$, we use that vertex to extend $P$, violating (O3). So (i) holds.

Finally, suppose $|C|=6$. By Lemma~\ref{RCedges}, if $v$ and $p$ do not have the same neighborhood, they are adjacent to disjoint sets of vertices, and $C+p$ and $C+v$ both contain $K_{3,4}$. In this case, we extend $P$ using any $c \in N_C(p)$, and replace $C$ with a chorded cycle in $C - c+v$. This violates (O3), so (i) holds.
\end{proof}

\begin{lemma}\label{pbuds}
For any $v \in R\setminus P$,
$\|\{v,p\},R\| \geq 4$, so that $\|v,R\| \geq 2$.  Moreover, $|P| \geq 3$. 
\end{lemma}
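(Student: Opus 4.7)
The plan is to read this off the Ore-degree hypothesis combined with Lemma~\ref{vp}. Fix $v \in R \setminus P$. Since $P$ is a longest path in $R$ and $v$ lies outside $P$, the maximality of $P$ forces $vp \notin E(G)$ (otherwise $vP$ would be a longer path), so $vp \in E(\overline G)$ and therefore
\[
d_G(v) + d_G(p) \;\geq\; \sigma_2(G) \;\geq\; 6k-2.
\]

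Next I would split the degree-sum into contributions from $\mathcal{C}$ and from $R$. Using Lemma~\ref{vp}, for every $C \in \mathcal{C}$ we have $\|\{v,p\},C\| \leq 6$, and summing over the $k-1$ cycles gives
\[
\|\{v,p\},\mathcal{C}\| \;\leq\; 6(k-1) \;=\; 6k-6.
\]
Since $vp \notin E(G)$, $d_G(v)+d_G(p) = \|\{v,p\},\mathcal{C}\| + \|\{v,p\},R\|$, and subtracting yields
\[
\|\{v,p\},R\| \;\geq\; (6k-2)-(6k-6) \;=\; 4,
\]
which is the first claim. For the second claim, recall from the preliminary observations that $\|p,R\| = \|p,P\| \leq 2$ (otherwise $G[P]$ would contain a chorded cycle). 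Therefore $\|v,R\| \geq 4 - \|p,R\| \geq 2$.

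Finally, for $|P| \geq 3$: suppose for contradiction that $|P| \leq 2$, so that the longest path in $R$ has length at most $1$. Having just shown $\|v,R\| \geq 2$, pick two distinct neighbors $w_1,w_2$ of $v$ in $R$; then $w_1 v w_2$ is a path of length $2$ in $R$, contradicting the maximality of $P$. So $|P| \geq 3$.

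There is no real obstacle here once Lemma~\ref{vp} is in hand; the entire argument is a one-line degree count using $\sigma_2(G) \geq 6k-2$, followed by the trivial extension argument for the length of $P$. The only small care point is ensuring that $vp \notin E(G)$ so we may legitimately invoke $\sigma_2$, which is immediate from the fact that $v \notin V(P)$ and $p$ is an endpoint of $P$.
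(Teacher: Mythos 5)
Your proof is correct and follows essentially the same route as the paper: deduce $vp \notin E(G)$ from the maximality of $P$, bound $\|\{v,p\},\mathcal{C}\| \le 6(k-1)$ via Lemma~\ref{vp} to get $\|\{v,p\},R\| \ge 4$, use $\|p,R\| \le 2$ to conclude $\|v,R\| \ge 2$, and extract a three-vertex path through $v$ for $|P| \ge 3$. No gaps; your extension argument for $|P|\ge 3$ is exactly the paper's.
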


\begin{proof}
Let $v \in R\setminus P$.  By the maximality of $P$, $pv \notin E(G)$.  Thus, by Lemma~\ref{vp}, 
\[2(3k-1) \leq d_G(v)+d_G(p)=\|\{v,p\},\Cc\|+\|\{v,p\},R\| \leq 6(k-1)+\|\{v,p\},R\|,\] 
so $\|\{v,p\},R\| \geq 4$.
Since $\|p,R\| \leq 2$, it follows that $\|v,R\| \geq 2$. 
Then $v$ and two of its neighbors form a path of length three in $R$, hence $|P| \geq 3$.
\end{proof}

\begin{lemma}\label{R-Pv1v2}
For any maximal path $P'$ in $R\setminus P$, label the (not necessarily distinct) endpoints $v_1$ and $v_2$ so that $\|v_1,P\| \leq \|v_2,P\|$. Then:
\begin{enumerate}[label=\textup{(\alph*)}, ref={\ref{R-Pv1v2}\alph*}]
\item$\|v_2,P\|\le2$, and if $v_1 \neq v_2$ then $\|v_1,P\|\le1$, 
\item\label{comp T} $d_R(v_1) = 2$ (this implies $v_1 \in T\setminus V(P)$ so that $T\setminus V(P) \neq \emptyset$), and
\item if $\|v_2,P\|=2$  and $\|v_1,P\|=1$, then $\|P'-v_1-v_2,P\|=0$.
\end{enumerate}
\end{lemma}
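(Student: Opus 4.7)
The plan is to handle the three parts in turn: parts (a) and (c) follow almost directly from Lemma~\ref{P2P3}, while part (b) requires a case analysis that culminates in exhibiting a chorded cycle inside $R$ -- which contradicts $R$ being chorded-cycle-free.

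For (a), I would apply Lemma~\ref{P2P3} with $U := P'$ and $W := P$. These are disjoint paths in $R$, and $R$ contains no chorded cycle, so the lemma gives $\|\{v_1,v_2\},P\| \le 3$. Combined with the labeling $\|v_1,P\| \le \|v_2,P\|$, this yields $\|v_1,P\|\le 1$ whenever $v_1 \ne v_2$; the bound $\|v_2,P\|\le 2$ follows from the same lemma (or directly from noting that three neighbors of $v_2 \in R\setminus P$ on $P$ would produce a chorded cycle inside $P+v_2 \subseteq R$). For (c), the configuration $\|v_2,P\|=2$, $\|v_1,P\|=1$ is precisely the equality case of Lemma~\ref{P2P3}, whose conclusion $\|U,W\|=3$ translates directly to $\|P'-v_1-v_2,P\|=0$.

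The heart of the proof is (b). The lower bound $d_R(v_1)\ge 2$ is Lemma~\ref{pbuds}. For the upper bound, first dispose of the trivial case $v_1 = v_2$: then $P' = \{v_1\}$, and maximality of $P'$ in $R\setminus P$ forces $N_R(v_1) \subseteq V(P)$, so $d_R(v_1) = \|v_1,P\| \le 2$. Otherwise, write $P' = x_0 x_1 \cdots x_m$ with $x_0 = v_1$ and $x_m = v_2$. Maximality of $P'$ gives $N_R(v_1)\setminus V(P)\subseteq V(P')$, and since $R$ has no chorded cycle, $\|v_1,P'\|\le 2$. Combined with $\|v_1,P\|\le 1$ from (a), $d_R(v_1) \le 3$, and the only case to rule out is $\|v_1,P\|=1$ with some neighbor $y_i \in V(P)$ and $\|v_1,P'\|=2$ with an additional neighbor $x_j \in V(P')$, $j\ge 2$. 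Because $\|v_2,P\| \ge \|v_1,P\|=1$, the vertex $v_2$ has some neighbor $y_r$ on $P$. Then
\[
v_2\, x_m\, x_{m-1}\, \cdots\, x_1\, v_1\, y_i\, \cdots\, y_r\, v_2,
\]
traversing $P$ from $y_i$ to $y_r$ in the appropriate direction (and collapsing to $v_2 x_m \cdots x_1 v_1 y_i v_2$ when $r = i$), is a simple cycle in $R$ -- simple because $V(P)\cap V(P') = \emptyset$ -- and $v_1 x_j$ is a chord of it, since the cycle-neighbors of $v_1$ are $x_1$ and $y_i$ while the cycle-neighbors of $x_j$ are $x_{j-1}$ and $x_{j+1}$. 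This contradicts $R$ being chorded-cycle-free.

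The main obstacle is the construction in (b): one must observe that the labeling convention $\|v_1,P\|\le\|v_2,P\|$ forces $v_2$ to supply the ``closing'' neighbor on $P$ exactly when $v_1$'s own neighbor on $P$ creates the need for one, and must verify that $v_1 x_j$ is genuinely a chord of the exhibited cycle. Once these are in place, the contradiction is immediate and the rest is routine.
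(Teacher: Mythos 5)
Your proof is correct and takes essentially the same approach as the paper: parts (a) and (c) come from applying Lemma~\ref{P2P3} to the disjoint paths $P'$ and $P$, and part (b) combines the lower bound $d_R(v_1)\ge 2$ from Lemma~\ref{pbuds} with the same key construction, namely that a second $P'$-neighbor of $v_1$ would be a chord of the cycle $v_1P'v_2a_2Pa_1v_1$. The only cosmetic difference is that you organize the case analysis in (b) by first bounding $d_R(v_1)\le 3$ and ruling out equality, whereas the paper splits on the value of $\|v_1,P\|$; the contradiction is identical.
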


\begin{proof}
Since $R$ contains no chorded cycle, no vertex in $R\setminus P$ has three neighbors in $P$, so $\|v_2,P\| \leq 2$.
Lemma~\ref{P2P3} then gives (a) and (c).

It remains to show (b). If $\|v_1,P\|=0$, then using Lemma~\ref{pbuds} and the maximality of $P'$, $d_R(v_1) = \|v_1,P'\|=2$.
If $v_1=v_2$, then $\|v_1,R\|=\|v_2,P\|=2$.  So suppose $\|v_1,P\|=1$ and $v_1 \neq v_2$. Since $\|v_2,P\| \geq \|v_1,P\| = 1$, there exist $a_1, a_2 \in P$
 (perhaps $a_1=a_2$) such that $v_1a_1,v_2a_2 \in E(G)$.
Then $v_1P'v_2a_2Pa_1v_1$ is a cycle.  Since it has no chord, $\|v_1,P'\|=1$, so $\|v_1,R\|=2$ and $v_1 \in T$.
\end{proof}

\begin{lemma}\label{pvnhd}
$d_R(p) = d_R(p') = 2$.  Additionally, for every $v \in T\setminus V(P)$ and every $C \in \Cc$: 
\begin{enumerate}[label=\textup{(\alph*)}, ref={\ref{pvnhd}\alph*}]
\item\label{pvnhdd} $|C| \in \{4,6\}$,
\item\label{pvnhdb} $\|p,C\|=3$, and
\item\label{pvnhdc} $N_\Cc(v)=N_\Cc(p)$.
\end{enumerate}
\end{lemma}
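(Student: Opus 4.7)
The plan is to combine a global degree-count with a rerouting argument that exploits Lemma~\ref{vp}.

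Fix any $v \in T \setminus V(P)$; such a vertex exists by Lemma~\ref{R-Pv1v2}. Since $v \notin V(P)$, the maximality of $P$ gives $pv \notin E(G)$, so $d_G(p) + d_G(v) \geq 6k-2$. Using $\|p,R\| \leq 2$ from the preliminary observations, $\|v,R\| = 2$ because $v \in T$, and $\|\{v,p\},C\| \leq 6$ for every $C \in \Cc$ by Lemma~\ref{vp}, summing yields
\[
  6k-2 \leq d_G(p) + d_G(v) = \|\{v,p\},R\| + \|\{v,p\},\Cc\| \leq 4 + 6(k-1) = 6k-2,
\]
so equality holds throughout. In particular $d_R(p) = 2$ (and by the same argument applied to $p'$, $d_R(p') = 2$), and $\|\{v,p\},C\| = 6$ for every $C \in \Cc$. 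Running the count with $p'$ in place of $p$ further gives $\|p',C\| = \|p,C\|$ for every $C$, since each satisfies $\|\cdot,C\| + \|v,C\| = 6$.

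For each $C \in \Cc$, the equality case of Lemma~\ref{vp} forces one of two outcomes: either (i) $|C| \in \{4,6\}$ with $N_C(v) = N_C(p)$ (so $\|p,C\| = \|v,C\| = 3$), or (ii) $|C|=4$, $G[C] \cong K_4$, $\|p,C\| = \|p',C\| = 4$, and $\|v,C\| = 2$. Case (i) immediately yields conclusions (a) and (b); summing $N_C(v) = N_C(p)$ across $C$ gives (c). So it suffices to rule out case (ii).

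To rule out (ii), pick any $c \in N_C(v)$. Because $G[C] \cong K_4$ and $p$ is adjacent to every vertex of $C$, $G[(V(C) \setminus \{c\}) \cup \{p\}]$ is again $K_4$ and contains a chorded $4$-cycle $C^{*}$ with two chords. Replacing $C$ by $C^{*}$ produces a new collection $\Cc'$ of $k-1$ disjoint chorded cycles with $|V(\Cc')| = |V(\Cc)|$ and the same chord total (both $C$ and $C^{*}$ are $K_4$), so $\Cc'$ ties $\Cc$ under (O1) and (O2). Setting $R' := G \setminus \Cc'$, the swap gives $V(R') = (V(R) \setminus \{p\}) \cup \{c\}$; and the edges $vc$ (by choice of $c$) and $cp'$ (from $\|p',C\| = 4$) combine with the subpath $p'q'\cdots q$ of $P$ to produce a path $v,c,p',q',\dots,q$ in $R'$ on $|P|+1$ vertices---strictly longer than $P$---contradicting (O3).

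I expect the main obstacle to be selecting the swap in case (ii) so that (O1), (O2), and the extension of $P$ in $R'$ all cooperate: the $K_4$-structure of $G[C]$ is what keeps the chord total unchanged after swapping $c$ out and $p$ in, and choosing $c$ from $N_C(v)$ rather than from $V(C)$ arbitrarily is what supplies the two adjacencies $vc$ and $cp'$ needed to lengthen $P-p$ by two. Once case (ii) is excluded, the remaining conclusions follow at once from case (i).
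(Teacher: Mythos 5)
Your proof is correct and matches the paper's argument almost step for step: the same degree count against a vertex $v \in T\setminus V(P)$ forces $d_R(p) = d_R(p') = 2$ and $\|\{v,p\},C\| = 6$ for every $C \in \Cc$, after which the equality cases of Lemma~\ref{vp} leave only the $\|p,C\| = |C| = 4$ configuration to eliminate. The sole divergence is the endgame: the paper picks $u \in N_C(p')\setminus N_C(v)$ and exhibits $k$ disjoint chorded cycles via $P+u$ and $C-u+v$, whereas you swap $p$ into $C$ in place of some $c \in N_C(v)$ and lengthen the path to $vcp'Pq$, contradicting (O3) instead --- an equally valid finish, since your $C^* \cong K_4$ ties $\Cc$ under (O1) and (O2).
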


\begin{proof}
By Lemma~\ref{R-Pv1v2}, $v \in T\setminus V(P)$ exists so that $d_R(v) = 2$.  Lemma \ref{pbuds} implies $\|\{v,p\},R\| \ge 4$, and hence,  $d_R(p) = 2$ and $\|\{v,p\},R\| = 4$.  Since $vp \notin E(G)$,  $\|\{v,p\},\Cc\| \geq (6k-2)-4= 6(k-1)$. By Lemma~\ref{vp}, $\|\{v,p\},C\|=6$ for all $C \in \Cc$.  If we can show that $\|p,C\| = 3$ for all $C \in \Cc$, then we are done by Lemma \ref{vp}.

If not, then there exists $C \in \Cc$ such that $\|p, C\| > 3$, so $\|p,C\| = 4$ and $G[C] \cong K_4$ by Lemma \ref{RCedges}.  Thus, $\|v,C\| = 2$, and by 
 Lemma~\ref{vp}, there exists $u \in N_C(p')$.   Since $\|p,P\| = 2$, $P + u$ forms a chorded cycle, so since $C - u + v$ also forms chorded cycles, we have a contradiction.  Thus, $\|p,C\| = 3$ as desired.
\end{proof}

From Lemma \ref{pvnhd} we immediately obtain the following.

\begin{cor}\label{mindeg}
$d_G(p) = d_G(p') = 3k - 1$, and consequently, $d_G(v) \ge 3k - 1$ for all $v \in R\setminus P$.
\end{cor}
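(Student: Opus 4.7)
The plan is to read $d_G(p)$ directly off Lemma~\ref{pvnhd} and then apply the Ore-degree hypothesis to get the bound on $d_G(v)$. Since we are in the case $V(R) \neq V(P)$, Lemma~\ref{R-Pv1v2}\ref{comp T} guarantees some $v \in T \setminus V(P)$, which activates Lemma~\ref{pvnhd}. That lemma gives $d_R(p) = 2$ and $\|p,C\| = 3$ for every $C \in \Cc$. Summing over the $k-1$ cycles of $\Cc$ yields $\|p,\Cc\| = 3(k-1)$, so
\[
d_G(p) \;=\; d_R(p) + \|p,\Cc\| \;=\; 2 + 3(k-1) \;=\; 3k-1.
\]
For $d_G(p')$, I would appeal to the symmetric roles of $p$ and $p'$ as endpoints of $P$: Lemma~\ref{pvnhd} already records $d_R(p') = 2$, and the argument giving $\|p,C\| = 3$ for every $C$ applies verbatim with $p'$ in place of $p$, producing $d_G(p') = 3k-1$ by the same computation.

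For the last assertion, let $v \in R \setminus V(P)$. By the maximality of $P$ used throughout this section, $vp \notin E(G)$, so the Ore-degree hypothesis immediately yields
\[
d_G(v) \;\geq\; \sigma_2(G) - d_G(p) \;\geq\; (6k-2) - (3k-1) \;=\; 3k-1,
\]
completing the three claims.

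I do not expect a real obstacle here: all of the substantive work has been carried out in Lemma~\ref{pvnhd}, and the corollary amounts to collecting the pieces. The only subtlety worth flagging is verifying that Lemma~\ref{pvnhd} is actually applicable, i.e.\ that $T \setminus V(P) \neq \emptyset$; but that is precisely the content of Lemma~\ref{R-Pv1v2}\ref{comp T} in the running case $V(R) \neq V(P)$, so the hypothesis is free.
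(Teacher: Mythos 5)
Your proposal is correct and matches the paper's (unwritten) argument exactly: the paper derives the corollary ``immediately'' from Lemma~\ref{pvnhd} in just the way you do, namely $d_G(p) = d_R(p) + \|p,\Cc\| = 2 + 3(k-1) = 3k-1$, with the symmetric statement for $p'$, followed by the Ore-degree bound applied to the nonadjacent pair $v,p$ for $v \in R\setminus P$. Your flagged subtlety (that $T\setminus V(P) \neq \emptyset$ is needed to invoke Lemma~\ref{pvnhd}) is likewise handled in the paper, which establishes this via Lemma~\ref{R-Pv1v2} inside the proof of Lemma~\ref{pvnhd} itself.
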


Recall that $\Pp$ is the set of vertices in $P$ that are the endpoint of a path spanning $V(P)$.  Note Lemmas \ref{vp}, \ref{pbuds}, \ref{R-Pv1v2}, and \ref{pvnhd} apply to each $p^* \in \Pp$.  Thus, $\Pp \subseteq T$, and furthermore, for all $p^*_1, p^*_2 \in \Pp$, $N_\Cc(p^*_1) = N_\Cc(p^*_2)$.


\begin{lemma}\label{|C|=6}
For every $C \in \mathcal C$, $G[C] \cong K_{3,3}$.
\end{lemma}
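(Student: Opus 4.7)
The plan is to rule out the possibility that some $C \in \Cc$ is a $4$-cycle; once that is done, Lemma~\ref{RCedges}(\ref{3edgesL6}) applied to $p$ (which satisfies $\|p,C\|=3$ by Lemma~\ref{pvnhdb}) immediately yields $G[C] \cong K_{3,3}$. Since Lemma~\ref{pvnhdd} already restricts $|C| \in \{4,6\}$ for every $C \in \Cc$, eliminating the $4$-cycle case is the only work remaining.

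Suppose for contradiction that some $C \in \Cc$ has $|C|=4$, and pick any $v \in T \setminus V(P)$, which exists by Lemma~\ref{comp T}. Combining Lemmas~\ref{pvnhdb}, \ref{pvnhdc}, and~\ref{vp}(i) gives $\|p,C\|=\|v,C\|=3$ and $N_C(v)=N_C(p)$, and Lemma~\ref{RCedges}(\ref{3edgesL4}) forces the unique chord of $C$ to be incident to the common non-neighbor of $p$ and $v$. So I may label the vertices of $C$ as $c_1c_2c_3c_4c_1$ with $N_C(p)=N_C(v)=\{c_1,c_2,c_3\}$ and chord $c_2c_4$. My key step is the swap $C' := c_3 v c_2 c_4 c_3$, a $4$-cycle whose only chord is $c_2c_3$: the new collection $\Cc' := (\Cc\setminus\{C\})\cup\{C'\}$ is again a set of $k-1$ vertex-disjoint chorded cycles, on exactly $|V(\Cc)|$ vertices and with exactly the same total number of chords as $\Cc$. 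Thus $\Cc'$ ties $\Cc$ on both (O1) and (O2), but in $R' := G\setminus\Cc' = (R\setminus\{v\})\cup\{c_1\}$, the edge $pc_1$ extends $P$ to a path on $|P|+1$ vertices, contradicting (O3).

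The main obstacle is identifying a swap that simultaneously preserves the chord count and frees a neighbor of $p$ for an extension of $P$. The construction above works because $\{c_2,c_3,c_4\}$ induces a triangle in $G$ (via the cycle edges $c_2c_3$ and $c_3c_4$ together with the chord $c_2c_4$), so $v$ combined with this triangle gives a chorded $4$-cycle that avoids $c_1$; then $pc_1 \in E(G)$ immediately supplies the forbidden longer path in $R'$.
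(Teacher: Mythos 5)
Your overall strategy is the paper's: same choice of $v \in T\setminus V(P)$, same labeling $N_C(p)=N_C(p')=N_C(v)=\{c_1,c_2,c_3\}$ with chord $c_2c_4$, and the same two key substructures (a chorded $4$-cycle on $\{v,c_2,c_3,c_4\}$ and the edge $pc_1$). But there is a genuine gap in how you convert these into a contradiction. You assert that $c_2c_4$ is ``the unique chord of $C$,'' citing Lemma~\ref{RCedges}\ref{3edgesL4}; that lemma only guarantees the \emph{existence} of a chord incident to the non-neighbor $c_4$ --- it does not rule out a second chord $c_1c_3$, i.e.\ $G[C]\cong K_4$. Nothing proved up to this point excludes that case: indeed, the proof of Lemma~\ref{vp} explicitly allows for it (``$G[N_C(v)\cup\{v\}]$ forms a chorded $4$-cycle with \emph{at least} the same number of chords as $C$''), and (O2) if anything favors doubly chorded $4$-cycles. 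In the $K_4$ subcase your replacement cycle $C' = vc_2c_4c_3v$ has exactly one chord ($c_2c_3$; note $vc_4\notin E(G)$), whereas $C$ has two, so $\Cc'$ ties on (O1) but \emph{loses} on (O2). Since (O3) is only invoked subject to (O1) and (O2), the longer path in $R'$ then yields no contradiction, and your argument collapses precisely in this subcase. No alternative swap rescues it: the only chorded $4$-cycle in $C+v$ with two chords is $G[\{v,c_1,c_2,c_3\}]$, and freeing $c_4$ does not extend $P$, since $c_4$ is adjacent to neither $p$ nor $p'$.

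The repair is the paper's own argument, which is both shorter and insensitive to the number of chords of $C$: since $d_R(p)=2$ by Lemma~\ref{pvnhd} (and Lemma~\ref{pvnhd} applies to $p'$ as well, so $p'c_1 \in E(G)$), the cycle $pPp'c_1p$ has a chord at $p$'s second neighbor on $P$, so $P+c_1$ contains a chorded cycle; meanwhile $C-c_1+v$ contains the chorded cycle $vc_2c_4c_3v$. These two are vertex-disjoint (as $v\notin V(P)$) and disjoint from $\Cc - C$, producing $k$ disjoint chorded cycles in $G$ --- a direct contradiction requiring no comparison under (O1)--(O3) at all. Your swap-plus-(O3) packaging works only after you separately dispose of the $G[C]\cong K_4$ possibility, and the natural way to do that is exactly this direct construction, which then makes the swap superfluous.
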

\begin{proof}
  If not, by Lemma~\ref{RCedges} and Lemma~\ref{pvnhd}, we may assume that there exists $C \in \Cc$ with $|C| = 4$.  Suppose $V(C) = \{c_1,c_2,c_3,c_4\}$.   Let $v \in T\setminus V(P)$, which we know exists by Lemma \ref{R-Pv1v2}.  By Lemmas \ref{RCedges} and \ref{pvnhd}, we may assume that $N_C(p) = N_C(p') = N_C(v) = \{c_1,c_2,c_3\}$ and $c_2c_4 \in E(G)$.  Since $\|p,P\| = 2$ by Lemma \ref{pvnhd}, $P+c_1$ and $C - c_1 + v$ contain chorded cycles,  a contradiction.  
\end{proof}

For the remainder of this section, we will use the fact that 
for each $C \in \Cc$, $G[C] \cong K_{3,3}$ and,
that there exist $A \subseteq C$ such that $A$ is
a partite set of $C$ and such that, 
for every $p^* \in \mathcal{P}$, $N_C(p^*) = A$, 
without mentioning Lemmas \ref{pvnhd}, and \ref{|C|=6}.

\begin{lemma}\label{nhdofv}
For every $C \in \Cc$, if $v \in R\setminus P$ has a neighbor in $C$, then $N_C(v) \subseteq N_C(p)$, unless $|N_C(v)| = 1$.
\end{lemma}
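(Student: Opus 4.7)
The plan is to prove the contrapositive: assume $v \in R \setminus P$ has $|N_C(v)| \geq 2$ and, for contradiction, that $N_C(v) \not\subseteq A$, where $A = N_C(p)$ is the partite set of $G[C] \cong K_{3,3}$ specified above the lemma; let $B$ be the other part. I split into cases based on $|N_C(v)|$ and how it distributes between $A$ and $B$. When $|N_C(v)| = 3$, Lemma~\ref{RCedges}\ref{3edgesL6} forces $N_C(v)$ to be a partite set, and since $N_C(v) \neq A$ we get $N_C(v) = B$. Then $\|\{v,p\},C\| = 6$, but neither equality case of Lemma~\ref{vp} applies: case (ii) fails because $|C| = 6 \neq 4$, and case (i) fails because $N_C(v) \neq N_C(p)$. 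Contradiction.

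When $|N_C(v)| = 2$ and the two neighbors straddle the parts, say $N_C(v) = \{a_i, b_j\}$, I will produce a chorded $5$-cycle $v a_i b_{j'} a_{i'} b_j v$ with chord $a_ib_j$, using any $a_{i'} \in A \setminus \{a_i\}$ and $b_{j'} \in B \setminus \{b_j\}$; all edges exist because $G[C] \cong K_{3,3}$. Replacing $C$ in $\Cc$ by this $5$-cycle yields $k-1$ disjoint chorded cycles spanning one fewer vertex in total, contradicting (O1).

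The main obstacle, and the most delicate case, is $|N_C(v)| = 2$ with $N_C(v) = \{b_i, b_j\} \subseteq B$. No shorter chorded cycle appears in $G[C+v]$ (a $4$-cycle in $K_{3,3}$ has no chord, and a $5$-cycle would require $v$ to be adjacent to some $a$-vertex), and single-vertex swaps $C \to C - c + v$ either destroy the $6$-cycle through $v$ or strictly lower the chord count, so optimality arguments alone do not suffice. My strategy is to exhibit two disjoint chorded cycles inside $G[R \cup C]$, which is forbidden by the setup. Let $b_k$ be the remaining vertex of $B$ and write $A = \{a_l, a_m, a_n\}$. The first cycle will be $v b_i a_m b_k a_n b_j v$, a $6$-cycle on $(V(C)\setminus\{a_l\}) \cup \{v\}$ whose edges all live in $K_{3,3}$ together with $vb_i, vb_j$, and which carries chords $a_mb_j$ and $a_nb_i$. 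For the second cycle I exploit that both endpoints of $P$ belong to $\Pp$ with common $C$-neighborhood $A \ni a_l$, so $a_lpPp'a_l$ is a cycle through $a_l$ and all of $V(P)$; since $\|p,P\| = 2$, the vertex $p$ has a second neighbor $x$ on $P$ distinct from its successor $q$, and $px$ is a chord of this cycle. The two cycles are vertex-disjoint because $V(P) \cup \{a_l\}$ and $(V(C)\setminus\{a_l\}) \cup \{v\}$ can only overlap at $v$ or $a_l$, yet $v \notin V(P)$ and $a_l \notin V(C)\setminus\{a_l\}$. This produces two disjoint chorded cycles in $G[R \cup C]$, the desired contradiction, and thereby $N_C(v) \subseteq N_C(p)$.
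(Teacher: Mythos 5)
Your proof is correct and takes essentially the same route as the paper's: in the straddling case you build the same chorded $5$-cycle contradicting (O1), and in the case $N_C(v) \subseteq B$ you exhibit the same two disjoint chorded cycles in $G[R \cup C]$ (a $6$-cycle through $v$ avoiding one vertex $a_l \in A$, together with a chorded cycle in $P + a_l$ obtained from $N_C(p) = N_C(p') = A$ and $\|p,P\| = 2$). The only cosmetic difference is that you handle $|N_C(v)| = 3$ as a separate case via the equality conditions of Lemma~\ref{vp}, whereas the paper subsumes it into the two-neighbors-in-$B$ case.
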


\begin{proof}
Fix $C \in \Cc$, and let $A = \{a_1,a_2,a_3\}$ and $B = \{b_1,b_2,b_3\}$ be the partite sets of $C$ such that $N_C(p) = N_C(p') = A$.  Suppose on the contrary, there exists  $v \in R\setminus P$ with $|N_C(v)| \ge 2$ such that, say $vb_3 \in E(G)$.  

By Lemma \ref{pvnhd}, $\|p,P\| = 2$ so that $P +a_i$ contains a chorded cycle for each $i \in [3]$.  If $vb_2 \in E(G)$, then $vb_3a_3b_1a_2b_2v$ is a cycle with chord $a_2b_3$.  However, $P+a_1$ also contains a chorded cycle, a contradiction. 

So we may assume that $va_3 \in E(G)$.  However, $vb_3a_2b_2a_3v$ is a 5-cycle with chord $a_3b_3$ contradicting (O1).  Thus, $N_C(v) \subseteq A = N_C(p)$, as desired.
\end{proof}

\begin{lemma}\label{R-P=T}
$R\setminus P$ is an independent set, and $V(R\setminus P) \subseteq T$.
\end{lemma}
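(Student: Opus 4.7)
The plan is to argue by contradiction: assume the lemma fails. First, I would observe that the two assertions collapse into one. If $v \in R\setminus P$ had $d_R(v)\ge 3$, then $\|v,P\|\le 2$ (forced by the absence of a chorded cycle in $R$) would give $v$ a neighbor in $R\setminus P$; so it suffices to derive a contradiction from the assumption that $R\setminus P$ contains an edge.

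Take a maximal path $P' = w_1 w_2 \cdots w_s$ in $R\setminus P$ containing such an edge (so $s\ge 2$), and in the convention of Lemma~\ref{R-Pv1v2} label the endpoints $v_1:=w_1, v_2:=w_s$ with $\|v_1,P\|\le\|v_2,P\|$. By Lemma~\ref{R-Pv1v2}(b), $d_R(v_1)=2$, so $v_1\in T\setminus V(P)$; and by Lemma~\ref{pvnhd}, $N_\Cc(v_1)=N_\Cc(p)$ consists of the designated partite set $A_C$ of $G[C]\cong K_{3,3}$ for each $C\in\Cc$. Let $w:=w_2$. For any $C$, Lemma~\ref{uvnhd} applied to the edge $v_1 w$ with $\|v_1,C\|=3$ forces $N_C(w)\cap A_C=\emptyset$ whenever $\|w,C\|\ge 1$, while Lemma~\ref{nhdofv} forces $N_C(w)\subseteq A_C$ whenever $|N_C(w)|\ge 2$. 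These constraints are compatible only when $|N_C(w)|\le 1$ for every $C$, so $\|w,\Cc\|\le k-1$. Combining with Corollary~\ref{mindeg} and $\|w,P\|\le 2$ yields
\[
\|w,R\setminus P\|\;\ge\;(3k-1)-(k-1)-2\;=\;2k-2\;\ge\;2.
\]

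The remainder of the proof derives a contradiction from this abundance of $R\setminus P$-neighbors of $w$. The clean subcase is $s=2$: then $w=v_2$ is itself an endpoint of $P'$, and any neighbor $u$ of $w$ in $R\setminus P$ distinct from $v_1$ lies outside $V(P')=\{v_1,w\}$, so $v_1 w u$ is a path in $R\setminus P$ contradicting the maximality of $P'$. The main obstacle is the subcase $s\ge 3$, where $w$ is an internal vertex of $P'$ and producing an edge $uw$ with $u\notin V(P')$ does not immediately extend $P'$. Here I would either (i) choose $P'$ at the outset to be a longest maximal path in $R\setminus P$ and use $w$'s many external neighbors to form paths of the same length with new endpoints, iterating Lemma~\ref{R-Pv1v2} to find additional vertices in $T\setminus V(P)$ whose joint $\Cc$-neighborhoods eventually violate Lemma~\ref{uvnhd} on some cycle; or (ii) show that if every $R\setminus P$-neighbor of $w$ lies in $V(P')$, so that $s\ge 2k-1$, the induced cactus structure of $R[V(P')]$ (where each edge $w w_j$ with $j\ge 4$ creates a chord-free cycle) together with Lemma~\ref{R-Pv1v2}(c) and the endpoint analysis at $v_2$ forces an extension of $P$ in $R$, contradicting (O3).
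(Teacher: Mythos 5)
Your reduction and your opening moves are sound: the collapse of the two assertions into ``no edge in $R\setminus P$,'' the use of Lemma~\ref{R-Pv1v2}(b) and Lemma~\ref{pvnhd} to get $N_\Cc(v_1)=N_\Cc(p)$, and the neat observation that Lemmas~\ref{uvnhd} and \ref{nhdofv} jointly force $|N_C(w_2)|\le 1$ for every $C$ are all correct, and your disposal of the case $s=2$ is complete. But the proposal has a genuine gap at exactly the point you flag: the case $s\ge 3$ is not proved, and neither of your two sketches can be completed as described. For $k=2$ the inequality $\|w_2,R\setminus P\|\ge 2k-2=2$ is already satisfied by the path-neighbors $w_1$ and $w_3$, so the ``abundance of external neighbors'' on which both sketches rely simply does not exist. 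Sketch (ii) also rests on a false structural picture: $w_2$ can be incident to at most \emph{one} hop on $P'$ (two hops $w_2w_j$, $w_2w_{j'}$ with $j<j'$ nest, making $w_2w_j$ a chord of the cycle $w_2w_3\dotsm w_{j'}w_2$), so $\|w_2,V(P')\|\le 3$; the premise of (ii) then forces $k=2$ rather than $s\ge 2k-1$, and yields nothing. Sketch (i) is an iteration whose termination is never argued.

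The missing idea is to work with $v_2$ rather than $v_2$'s neighbor $w_2$, and to apply Lemma~\ref{P2P3} to the pair $(P',\,\text{a path inside }C)$, which handles all $s\ge 2$ uniformly. Since $\|v_2,P\|\le 2$ and $v_2$'s neighbors in $R\setminus P$ lie on $P'$, one gets $d_R(v_2)\le 4 < 3k-1 \le d_G(v_2)$ (Corollary~\ref{mindeg}, using $pv_2\notin E(G)$), so $v_2$ has a neighbor $c$ in some $C\in\Cc$ with partite sets $A,B$ and $N_C(v_1)=N_C(p)=N_C(p')=A$. Choose $a_1\in A$ with $a_1\neq c$; since $G[C]\cong K_{3,3}$, the graph $C-a_1$ has a spanning path $W$ with endpoint $c$ containing $a_2,a_3$. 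Then $\|v_1,W\|\ge 2$ and $v_2$ is adjacent to the \emph{endpoint} $c$ of $W$, so either $\|\{v_1,v_2\},W\|\ge 4$ or the equality configuration of Lemma~\ref{P2P3} (the degree-one endpoint's neighbor strictly between the other's neighbors) is violated; either way $G[V(P')\cup V(C-a_1)]$ contains a chorded cycle. Meanwhile $\|p,P\|=2$ makes $P+a_1$ a second, disjoint chorded cycle, so replacing $C$ by these two yields $k$ disjoint chorded cycles, a contradiction. This single argument makes your $s\ge 3$ case analysis, and the degree count at $w_2$, unnecessary.
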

\begin{proof}
Suppose $R\setminus P$ is not an independent set.  Then there exists a maximal path $P'$ in $R\setminus P$ with distinct endpoints $v_1$ and $v_2$, labeled as in Lemma \ref{R-Pv1v2}.   Thus,$\|v_2,P\| \le 2$, and, hence, $d_R(v_2) \le 4$.  Since $pv_2 \notin E(G)$, Lemma \ref{mindeg} implies that $d_G(v_2) \ge 3k - 1 > 4$,  which implies that there exists $C \in \Cc$ such that
$v_2$ has a neighbor $c \in C$.

Let $A = \{a_1,a_2,a_3\}$ and $B = \{b_1,b_2,b_3\}$ be the partite sets of $C$ such that $N_C(p) = N_C(p') = A$.
By Lemmas \ref{R-Pv1v2} and \ref{pvnhd}, $v_1 \in T\setminus V(P)$ and $N_C(v_1) = A$.
We can assume $a_1 \neq c$, so that there exists a path $W$ in $C - a_1$ that contains $a_2$ and $a_3$ for which $c$ is an endpoint.  Since $\|v_1, W\| \ge 2$ and $v_2$ is adjacent to an endpoint of $W$, $\|\{v_1,v_2\}, W\| \ge 3$ and 
 Lemma \ref{P2P3} implies there is a chorded cycle in $G[V(P') \cup V(C - a_1)]$.  However, as $\|p,P\| = 2$, $P+a_1$ also contains a chorded cycle, a contradiction.
%
%
%
%
\end{proof}

Let $\Ss := N_\Cc(p)$, and let $\Tt := ((\bigcup_{C \in \Cc} V(C))\setminus \Ss) \cup T$.

\begin{prop}\label{bipar_sub}
$G[\Ss \cup \Tt] \cong K_{3k-3,|\Tt|}$, and no vertex in $G$ has neighbors in both $\Ss$ and $\Tt$.
\end{prop}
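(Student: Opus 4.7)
The plan is to prove Proposition~\ref{bipar_sub} by establishing, in sequence: (i) for every $t \in T$, $N_\Cc(t) = \Ss$ (extending Lemmas~\ref{pvnhd} and~\ref{nhdofv} to all of $T$); (ii) $\Ss$ is independent; (iii) every $a \in \Ss$ is adjacent to every $b \in \bigcup_{C \in \Cc} B_C$, where $B_C := V(C) \setminus A_C$; (iv) $\Tt$ is independent (no edges within $\bigcup_C B_C$, within $T$, or between them); and (v) no vertex of $V(P) \setminus T$ has neighbors in both $\Ss$ and $\Tt$. Together, (i)--(iv) give $G[\Ss \cup \Tt] \cong K_{3k-3,|\Tt|}$ (note $\Ss \cap \Tt = \emptyset$), and (v) is precisely the bridging condition, since $V(G) \setminus (\Ss \cup \Tt) = V(R) \setminus T \subseteq V(P)$.

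For (i), fix $t \in T \cap V(P) \setminus \Pp$ and produce a non-neighbor of $t$ with degree $3k-1$. A natural candidate is $v \in T \setminus V(P)$, which exists by Lemma~\ref{R-Pv1v2}; the two $R$-neighbors of $v$ must lie on $P$ (by Lemma~\ref{R-P=T}) and be at distance $\geq 2$ apart on $P$, since otherwise $P$ could be extended through $v$. If $t$ is not one of these two neighbors, then $tv \notin E(G)$; otherwise, at least one of $p, p'$ is a non-neighbor of $t$ with $d(p) = d(p') = 3k-1$ (Corollary~\ref{mindeg}). Either way, the Ore-degree hypothesis yields $d(t) \geq 3k-1$, which together with $\|t, C\| \leq 3$ for every $C$ (from Lemma~\ref{RCedges} and Lemma~\ref{|C|=6}) forces $\|t, C\| = 3$ for every $C \in \Cc$. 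Lemma~\ref{RCedges} then ensures that $N_C(t)$ is a partite set of $G[C] \cong K_{3,3}$; to identify this partite set with $A_C$, I would reroute $P$ so that $t$ becomes an endpoint of a spanning path of $G[V(P)]$, placing $t \in \Pp$, whence Lemma~\ref{pvnhd} applies.

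Steps (ii)--(iv) each proceed by contradiction: assume a forbidden edge (or missing edge) and build $k$ disjoint chorded cycles. For (ii), if $a_1 a_2 \in E(G)$ with $a_i \in A_{C_i}$ and $C_1 \neq C_2$, the cycle $D := p\,a_1\,a_2\,p'\,P\,p$ is chorded (chords $p a_2$ and $p' a_1$, valid since $N_{C_i}(p) = N_{C_i}(p') = A_{C_i}$). The plan is then to construct two more chorded cycles from $(C_1 - a_1) \cup (C_2 - a_2) \cup \{v\}$, leaving $C_3, \ldots, C_{k-1}$ untouched; this exploits $v$'s adjacency to $A_{C_1} \cup A_{C_2}$ via~(i). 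Step (iii) uses the Ore-degree bound with $v$ as a non-neighbor of $b \in B_C$ (since $N_C(v) = A_C$ by Lemma~\ref{nhdofv}) to force $d(b) \geq 3k-1$, then derives a contradictory reshuffling along the same lines. Step (iv) is analogous: a putative $b_1 b_2 \in E(G)$ or $b\,t \in E(G)$ allows a rewiring of $\Cc$ that produces $k$ disjoint chorded cycles. For (v), an internal $w \in V(P) \setminus T$ (so $d_R(w) \geq 3$) with neighbors in both $\Ss$ and $\bigcup_C B_C$, together with a suitable vertex of the relevant $C$, should yield a chorded cycle in $G[R \cup C]$ that reduces the vertex count of $\Cc$, contradicting (O1) or (O2).

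The main obstacle will be the reshuffling constructions in (ii) and (iii). Each $K_{3,3}$ cycle is rigid under vertex deletion: removing any vertex yields $K_{2,3}$, which has no chorded cycle. Hence the replacement chorded cycles must carefully combine $v$'s adjacency to $A_{C_1} \cup A_{C_2}$ with the putative bad edge, possibly supplemented by other inter-cycle edges supplied by earlier steps. Care is needed to ensure all $k$ resulting cycles are vertex-disjoint and that (O1), (O2), (O3) are respected consistently throughout.
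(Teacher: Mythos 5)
Your plan stalls exactly where you predict it will, and the obstacle you name is fatal to the route you chose. In step (ii), after forming $D = pa_1a_2p'Pp$ you need two disjoint chorded cycles inside $(C_1 - a_1) \cup (C_2 - a_2) \cup \{v\}$, but with the adjacencies established at that stage this graph contains no chorded cycle at all: each $C_i - a_i \cong K_{2,3}$, the vertex $v$ attaches only to the two surviving vertices of $A_{C_i}$, and any edge inside $B_{C_i}$, between $B_{C_1}$ and $B_{C_2}$, or from $v$ to a $B$-vertex is precisely what has not yet been ruled in or out. The same rigidity undermines the unspecified ``reshufflings'' in (iii) and (iv). The paper's proof never rebuilds broken $K_{3,3}$'s; its key move is a \emph{swap}: for $a \in V(C)\cap\Tt$ and $v \in R\setminus P$, Lemmas~\ref{pvnhd} and \ref{R-P=T} give $N_\Cc(v) = \Ss$, so $C' := C - a + v$ is again a $K_{3,3}$, and $\Cc - C + C'$ satisfies (O1)--(O3) with $P$ untouched. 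All earlier lemmas therefore apply to the new configuration, in which $a$ is a vertex of the new $R\setminus P$; Lemmas~\ref{R-Pv1v2} and \ref{R-P=T} put $a \in T$, and Lemma~\ref{pvnhd} gives $N_\Cc(a) = \Ss$. This yields the full $K_{|\Ss|,|\Tt|}$ in one stroke, \emph{before} any independence argument. Independence and the bridging claim then follow by repacking rather than by exhibiting new cycles from scraps: since $|\Ss| = 3(k-1)$ and $|\Tt| > 3(k-1)$, a putative edge inside $\Ss$ or inside $\Tt$ gives a chorded $4$-cycle on two vertices from each side (the bad edge as a chord) together with $k-2$ disjoint copies of $K_{3,3}$ drawn from the remaining vertices of $\Ss \cup \Tt$, i.e.\ $k-1$ chorded cycles on $6k-8 < 6(k-1)$ vertices, contradicting (O1); a vertex outside $\Ss \cup \Tt$ with neighbors on both sides likewise gives a chorded $5$-cycle plus $k-2$ copies of $K_{3,3}$.

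Your step (i) also contains an internal contradiction: for $t \in T \cap V(P) \setminus \Pp$, the definition of $\Pp$ says precisely that $t$ is \emph{not} an endpoint of any path spanning $V(P)$, so ``rerouting $P$ so that $t$ becomes an endpoint'' is impossible. (The degree computation is fine: $d_G(v) = 2 + 3(k-1) = 3k-1$ exactly for $v \in R\setminus P$, so the Ore condition does force $d_G(t) \ge 3k-1$ and $\|t,C\| = 3$ for every $C$; what cannot be fixed this way is identifying $N_C(t)$ with $\Ss \cap V(C)$ rather than the opposite partite set.) The paper sidesteps this entirely: it establishes $N_\Cc(\cdot) = \Ss$ only for $\Pp$, for $R \setminus P$, and---via the swap---for the cycle vertices of $\Tt$, which is all that its later applications of the proposition use. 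Similarly, your Ore-degree argument in (iii) only yields $d_G(b) \ge 3k-1$ for $b \in B_C$, which does not locate the $3(k-2)$ neighbors of $b$ in $\Cc - C$; the swap pins them down exactly. In short, the missing idea is the exchange of cycle vertices with $R\setminus P$ vertices to transport the already-proved lemmas onto $\Tt$, followed by an (O1) vertex-count contradiction; without it, the constructive steps (ii)--(v) cannot be completed as sketched.
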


\begin{proof}
  By Lemma~\ref{|C|=6}, $\mathcal C$ consists of $k-1$ copies of $K_{3,3}$. Lemmas \ref{pvnhd} and \ref{R-P=T} tell us that, for every $v \in R\setminus P$, $N_\Cc(v)=\Ss$. Given $C \in \Cc$, $a \in V(C) \cap \Tt $, and $v \in R\setminus P$, we can create a chorded cycle $C'$ by swapping $a$ and $v$ in $C$. Note $G[C'] \cong K_{3,3}$, and we have not changed any vertices in $P$.  Then replacing $C$ with $C'$ in $\Cc$ results in a collection of $k-1$ chorded cycles satisfying (O1) through (O3). Thus all the previous lemmas apply, and, in particular, Lemma~\ref{R-Pv1v2} and Lemma~\ref{R-P=T} imply that $a \in T$.  So by Lemma~\ref{pvnhd}, and the fact that $N_C(a)= V(C) \cap \Ss$, we conclude $N_\Cc(a)=\Ss$.  Hence, every vertex in $\Tt$ is adjacent to every vertex in $\Ss$, and $G[\Ss \cup \Tt]$ contains a copy of $K_{|\Ss|,|\Tt|}$.

We claim $G[\Ss \cup \Tt]$ has no additional edges.  Note $|\Tt|>3(k-1)$ and $|\Ss|=3(k-1)$. If there exists any edge with both endpoints in $\Tt$, or both endpoints in $\Ss$, then we find a set of $k-1$ chorded cycles, $k-2$ of which are 6-cycles, and one of which is a 4-cycle, violating (O1). So $G[\Ss \cup \Tt]\cong K_{|\Ss|,|\Tt|} \cong K_{3k-3,|\Tt|}$. 

If any vertex of $V(G)\setminus(\Ss \cup \Tt)$ has  neighbors in both $\Ss$ and $\Tt$, then in a similar manner, we find $k-1$ disjoint chorded cycles, one of which is a 5-cycle and the rest of which are 6-cycles, again violating (O1).
\end{proof}

Recall that $q$ and $q'$ were defined as the neighbors of $p$ and $p'$, respectively, on $P$.  Since $\|p,P\| = 2$ by Lemma \ref{pvnhd}, there exists $w \in N_R(p)\setminus\{q\}$.  As a consequence of Proposition~\ref{bipar_sub}, $w \neq p'$.   Now the neighbor of $w$ on $pPw$ is the endpoint of a path that spans $V(P)$.  Thus, $|\Pp| \ge 3$.

\begin{lemma}\label{|P|=3}
$|\Pp|=3$
\end{lemma}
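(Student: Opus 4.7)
The paragraph immediately preceding Lemma~\ref{|P|=3} has already shown $|\Pp|\ge 3$, so my task is to prove $|\Pp|\le 3$. I will argue by contradiction: assume $|\Pp|\ge 4$ and derive a contradiction.

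My first step is to show that $\Pp$ is an independent set in $G$. Each $p^\ast\in\Pp$ is an endpoint of a spanning path of $V(P)$, which, by the maximality condition (O3), is itself a longest path of $R$, so the arguments of Lemmas~\ref{pvnhd}--\ref{nhdofv} apply with $p^\ast$ in place of $p$. In particular, $d_R(p^\ast)=2$ and $N_\Cc(p^\ast)=\Ss$, placing $\Pp\subseteq T\subseteq\Tt$. By Proposition~\ref{bipar_sub} we have $G[\Ss\cup\Tt]\cong K_{3k-3,|\Tt|}$, so $\Tt$ is independent and thus so is $\Pp$. Next, since $\|p,P\|=\|p',P\|=2$ there are unique indices $j\ge 3$ and $j'\le m-2$ with $p_1 p_j,\, p_m p_{j'}\in E(G)$. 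Standard rotations through these chords give spanning paths of $V(P)$ with endpoints $p_{j-1}$ and $p_{j'+1}$ respectively, so $\{p_1,p_{j-1},p_{j'+1},p_m\}\subseteq\Pp$, and $|\Pp|\ge 4$ forces $p_{j-1}\ne p_{j'+1}$, i.e., $j\ne j'+2$. The independence of $\Pp$ then rules out each potential adjacency among these four vertices (using that each internal $p^\ast=p_i\in\Pp$ has $R$-neighborhood exactly $\{p_{i-1},p_{i+1}\}$, because $d_R(p^\ast)=2$ and these are already its $P$-neighbors); after reducing by the symmetry $(p,p')\leftrightarrow(p',p)$ to the case $j>j'$, I obtain $j\ge j'+4$, $4\le j\le m-1$, $2\le j'\le m-3$, and $m\ge 7$.

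The final step is to derive a contradiction from these constraints. Two observations are crucial. First, since $\Tt$ is independent and $T\subseteq\Tt$, no two consecutive vertices of $P$ both lie in $T$, so each $P$-neighbor of a $\Pp$-vertex is forced out of $T$ and must have $d_R\ge 3$, i.e., be incident to at least one chord of $G[V(P)]$ beyond its two path edges. Second, $R$ contains no chorded cycle. I plan a case analysis on these forced extra chords (in particular, the chord incident to the interior ``gap'' vertex or vertices of $P$ lying strictly between $p_{j'+1}$ and $p_{j-1}$): in every subcase the new chord must combine with $p_1 p_j$ and $p_m p_{j'}$ to form a chorded cycle entirely inside $R$, which contradicts the absence of chorded cycles in $R$; or else the new chord increases the $R$-degree of one of $p_{j-1}$ or $p_{j'+1}$ beyond $2$, ejecting that vertex from $T$ and hence from $\Pp$, which contradicts $|\Pp|\ge 4$. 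The main obstacle is the bookkeeping of this case analysis: for each admissible $(j,j')$ one must check all candidate chord endpoints at each forced non-$T$ vertex and verify that at least one of the two contradictions arises.
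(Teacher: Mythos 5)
Your step 1 (independence of $\Pp$ via $\Pp \subseteq T \subseteq \Tt$ and Proposition~\ref{bipar_sub}) is sound, and is exactly how the paper later argues in Lemma~\ref{K23}. But the heart of your argument, step 3, is not a proof: you announce a case analysis, concede the ``bookkeeping'' is unchecked, and in fact the planned analysis cannot succeed, because no contradiction is available inside $R$ alone. Concretely, take $P = p_1p_2\dotsm p_7$ with hops $p_1p_6$ and $p_2p_7$, and one vertex $v \in R \setminus P$ adjacent to $p_2$ and $p_4$. One checks that $R$ has no chorded cycle, that $\Pp = \{p_1,p_3,p_5,p_7\}$ is independent of size $4$, and that every $P$-neighbor of a $\Pp$-vertex has $d_R \ge 3$, exactly as your constraints force; yet neither horn of your dichotomy fires: the extra edge at $p_4$ goes to $R\setminus P$ rather than being a chord of $G[V(P)]$ (a possibility your plan overlooks, and one that is always live in Section~\ref{R ne P}, where $R\setminus P \neq \emptyset$), it creates no chorded cycle in $R$, and it ejects no vertex of $\Pp$ from $T$. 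Such a configuration can only be killed by counting edges into $\Cc$, which is what the paper does and your plan never engages beyond independence. The paper's proof takes the first four members $p_1,\dotsc,p_4$ of $\Pp$ along $P$, fixes $C \in \Cc$ with $G[C] \cong K_{3,3}$ and $N_C(p_i) = A$ for all $i$, gets $N_C(q) \subseteq B$ from Lemma~\ref{uvnhd} (so $q \neq p_2$), and observes that a neighbor $b_1$ of $q$ in $C$ yields the two disjoint chorded cycles $qb_1a_1b_2a_2p_1q$ and $p_2Pp_4a_3p_2$ (chords $p_1a_1$ and $p_3a_3$) inside $G[R \cup C]$, a contradiction; hence $N_\Cc(q) = \emptyset$, whence $d_G(q) = \|q,R\| \ge 3k-1 \ge 5$ (since $qp_3 \notin E(G)$ and $d_G(p_3) = 3k-1$ by Corollary~\ref{mindeg}), which together with $\|q,P\| \le 3$ forces two neighbors $v_1,v_2$ of $q$ in $R\setminus P$; these satisfy $N_C(v_i) = A$ by Lemma~\ref{pvnhd}, and again two disjoint chorded cycles appear. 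Your purely-internal strategy misses this essential mechanism.

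There are also two flaws in your step 2. First, the inference that $|\Pp| \ge 4$ forces $p_{j-1} \neq p_{j'+1}$ is unjustified: the fourth member of $\Pp$ may arise from an iterated rotation and need not lie among the four vertices you list (the paper sidesteps this entirely by never tracking how endpoints arise, simply taking the first four members of $\Pp$ in their order along $P$). Second, the ``symmetry reduction to $j > j'$'' is invalid: reversing $P$ sends $(j,j')$ to $(m+1-j',\,m+1-j)$, which preserves the sign of $j - j'$, so the non-crossing case $j < j'$ is never reduced to $j > j'$ and remains unhandled in your scheme.
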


\begin{proof}
Suppose $|\Pp|\geq 4$, with $p_1,p_2,p_3,p_4$ the first four members of $\Pp$ along $P$.  In particular, $p_1 = p$.  Fix $C \in \Cc$, and let $A = \{a_1,a_2,a_3\}$ and $B = \{b_1,b_2,b_3\}$ be the partite sets of $C$ such that $N_C(p_i) = A$ for each $i \in [4]$.  

By Lemma \ref{uvnhd}, $N_C(q) \subseteq B$.  So in particular, $q \neq p_2$.  If $q$ has a neighbor in $C$, say $b_1$, then $qb_1a_1b_2a_2p_1q$ is a 6-cycle with chord $p_1a_1$ and $p_2Pp_4a_3p_2$ is a cycle with chord $p_3a_3$, a contradiction. 

So we may assume that for every $C \in \Cc$, $N_C(q) = \emptyset$.  That is, $\|q,R\| = d_G(q)$. Since $\|p_3,P\|=2$ by Lemma~\ref{pvnhd},
 $q$ is not adjacent to $p_3$. 
Then since $d_G(p_3)=3k-1$ by Corollary~\ref{mindeg},  $d_G(q) \ge 3k -1 \ge 5$.  Since $\|q,P\| \le 3$, $q$ must be adjacent to two vertices $v_1,v_2 \in R\setminus P$.  By Lemma \ref{pvnhd}, $N_C(v_1) = N_C(v_2) = A$.  However, this yields the cycles $v_1qv_2a_2b_1a_1v_1$ and $p_2Pp_4a_3p_2$ with chords $v_1a_2$ and $p_3a_3$, respectively, a contradiction.
\end{proof}

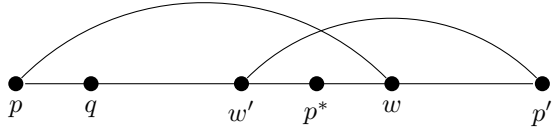
\begin{figure}[ht]\centering
\begin{tikzpicture}
\foreach \x in {1,2,4,5,6,8}
	{\draw (\x,0) node(a\x){};}
\draw (a1)--(a8);
\draw (a1) node[vertex,label=below:$p$](p){};
\draw (a2) node[vertex,label=below:$q$](q){};
\draw (a4) node[vertex,label=below:$w'$](w'){};
\draw (a5) node[vertex,label=below:$p^*$](p^*){};
\draw (a6) node[vertex,label=below:$w$](w){};
\draw (a8) node[vertex,label=below:$p'$](p'){};
\draw(p) to[out=45, in=135] (w);
\draw(w') to[out=45, in=135] (p');
\end{tikzpicture}
\caption{Setup for Lemma \ref{K23}}\label{pstar}
\end{figure}

\begin{lemma}\label{K23}
$G[P] \cong K_{2,3}$
\end{lemma}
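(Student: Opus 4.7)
The plan is to first pin down $|P|=5$ by showing, whenever $|P|\ge 6$, that there is a spanning path of $G[P]$ whose endpoint lies outside $\{p,p^*,p'\}$, contradicting Lemma~\ref{|P|=3}; then to identify the edges of $G[P]$ as exactly those of $K_{2,3}$. Write $P = v_1 v_2 \cdots v_m$ with $v_1 = p$, $v_m = p'$, and $v_2 = q$. By Lemma~\ref{pvnhd} one has $\|p, P\| = 2$, so $p$ has a unique $P$-neighbor $w = v_j$ other than $q$; Proposition~\ref{bipar_sub} gives $w \ne p'$, so $3 \le j \le m-1$. The predecessor $v_{j-1}$ of $w$ on $P$ is the endpoint of a spanning path (back from $v_{j-1}$ to $v_1$, jump via the chord $pw$, then continue from $v_j$ to $v_m$), so $p^* = v_{j-1} \in \Pp$. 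Running the same argument from $p'$ produces $w' = v_{j''}$ with $v_{j''+1} \in \Pp$; since $|\Pp| = 3$ by Lemma~\ref{|P|=3}, this endpoint must equal $p^*$, forcing $w' = v_{j-2}$. The condition $w' \ne p$ (Proposition~\ref{bipar_sub}) then forces $j \ge 4$, and in particular $m\ge 5$.

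The main step is to exhibit two additional spanning paths of $G[P]$ built by splicing via both chords $v_1 v_j = pw$ and $v_m v_{j-2} = p' w'$:
\begin{align*}
Q_1 &= v_{j-3}\, v_{j-4}\, \cdots\, v_1\, v_j\, v_{j+1}\, \cdots\, v_m\, v_{j-2}\, v_{j-1},\\
Q_2 &= v_{j+1}\, v_{j+2}\, \cdots\, v_m\, v_{j-2}\, v_{j-3}\, \cdots\, v_1\, v_j\, v_{j-1}.
\end{align*}
Verifying that each $Q_i$ uses only path edges together with the two chords and visits every vertex of $V(P)$ exactly once is the technical heart of the argument. Once done, $v_{j-3}, v_{j+1}\in \Pp = \{v_1, v_{j-1}, v_m\}$; since $v_{j-3} \ne v_{j-1},v_m$, necessarily $v_{j-3} = v_1$, giving $j=4$, and since $v_{j+1} \ne v_1, v_{j-1}$, necessarily $v_{j+1} = v_m$, giving $j = m-1$. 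Combining forces $m=5$.

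With $|P|=5$ and $j=4$ we have $q = w' = v_2$, $p^* = v_3$, $w = q' = v_4$, and the four path-edges together with the chords $v_1 v_4$ and $v_5 v_2$ form exactly the six edges of $K_{2,3}$ on parts $\{v_2, v_4\}$ and $\{v_1, v_3, v_5\}$. To finish, any putative additional edge in $G[P]$ would lie inside one of these two parts: Lemma~\ref{pvnhd} gives $\|p, P\| = \|p', P\| = 2$, which immediately rules out $v_1 v_3$, $v_1 v_5$, and $v_3 v_5$; and were $v_2 v_4 \in E(G)$, the $4$-cycle $v_1 v_2 v_3 v_4 v_1$ (closed via the chord $v_1 v_4$) would have chord $v_2 v_4$, producing a chorded cycle in $R$, contrary to our standing assumption. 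Hence $G[P] \cong K_{2,3}$.
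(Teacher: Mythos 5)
Your proof is correct, and in the decisive case it takes a genuinely different route from the paper. Both arguments share the opening phase: using $\|p,P\| = \|p',P\| = 2$ (Lemma~\ref{pvnhd}), $pp' \notin E(G)$ (Proposition~\ref{bipar_sub}), and $|\Pp| = 3$ (Lemma~\ref{|P|=3}) to pin down the two hops as $v_1v_j$ and $v_mv_{j-2}$ with $p^* = v_{j-1}$ sandwiched between their inner endpoints --- your deduction $w' = v_{j-2}$ is exactly the paper's conclusion that $wPw'$ is the three-vertex path $wp^*w'$. The divergence comes next: the paper disposes of the remaining case ($q \neq w'$) by leaving $R$, invoking Corollary~\ref{mindeg} to get $d_G(q), d_G(w') \ge 3k-1$, locating two neighbors of $q$ and $w'$ in $B \cup (R\setminus P)$ for a fixed $C \cong K_{3,3}$ (via Lemmas~\ref{uvnhd} and \ref{pvnhd}), and then exhibiting two disjoint chorded cycles in $G[R \cup C]$, a contradiction. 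You instead stay entirely inside $R$: your spliced spanning paths $Q_1$ and $Q_2$, which route through both hops, have endpoints $v_{j-3}$ and $v_{j+1}$ that must lie in $\Pp = \{v_1, v_{j-1}, v_m\}$, forcing $j = 4$ and $j = m-1$ simultaneously, hence $|P| = 5$ with the hops in exactly the $K_{2,3}$ position; I checked both splices and they are legitimate spanning paths (all segments use path edges plus only the two known hops, and the degenerate cases $j=4$ and $j=m-1$ cause no trouble since the endpoints then coincide with $p$ or $p'$). Your closing step ruling out extra edges matches the paper's in spirit ($\|p,P\|=\|p',P\|=2$ kills edges inside $\{v_1,v_3,v_5\}$, and $v_2v_4$ would chord the $4$-cycle $v_1v_2v_3v_4v_1$). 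What your approach buys is self-containment: the hard case needs no degree hypothesis and no structure of $\Cc$, showing the purely structural fact that a longest path with $|\Pp|=3$, both endpoint degrees $2$ in $R$, and nonadjacent endpoints must already be the $K_{2,3}$ spine; what the paper's approach buys is uniformity, reusing the $K_{3,3}$/swapping machinery (Lemmas~\ref{uvnhd}, \ref{pvnhd}, Corollary~\ref{mindeg}) that drives the rest of Section~\ref{R ne P}.
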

\begin{proof}
By Lemma \ref{|P|=3}, we may assume that $\Pp = \{p,p',p^*\}$.  Recall
that $\Pp \subseteq \Tt$, so Lemma~\ref{bipar_sub} implies that
$\Pp$ is an independent set.  Lemma~\ref{pvnhd} implies that  
$\|p,P\| = \|p',P\| = \|p^*, P\| = 2$, so there exist $w$ and $w'$ on  $P$ such that $w \neq q$ and $w' \neq q'$ and $N_P(p) = \{q, w\}$ and $N_P(p') = \{q', w'\}$.  Furthermore, since $|\Pp| = 3$, and both
the neighbor of $w$ on $pPw$ and the neighbor of $w'$ on $w'Pp'$ are
in $\Pp$, we can conclude that $w \neq w'$ and $N_P(p^*) = \{w, w'\}$, 
i.e.\ $wPw'$ is the path on three vertices $wp^*w'$.

Since $G[P]$ does not contain a chorded cycle, $qq' \notin E$,
so if $w=q'$ and $w'=q$, then $G \cong K_{2,3}$.
So if $G \not \cong K_{2,3}$, then without loss of generality we can assume that $q \neq w'$ as in Figure \ref{pstar}.   Thus, $qp', pw' \notin E(G)$  so, by Corollary~\ref{mindeg}, $d_G(q), d_G(w') \ge 3k - 1$.

Fix $C \in \Cc$ with partite sets $A = \{a_1,a_2,a_3\}$ and $B = \{b_1,b_2,b_3\}$ such that $N_C(p) = N_C(p') = N_C(p^*) = A$.  By Lemma \ref{uvnhd}, $N_C(q) \subseteq B$ and $N_C(w') \subseteq B$.

Since $d_G(q) \ge 3k - 1$, $\|q,C \cup R\| \ge (3k - 1) - 3(k - 2) = 5$.  Also,   $\|q, P\| \le 3$. This holds for $w'$ as well.  Thus, both $q$ and $w'$ have two neighbors in $B \cup (R\setminus P)$.  Let $v_1$ and $v_2$ be distinct vertices in $B \cup (R\setminus P)$ such that  $v_1q, v_2w' \in E(G)$.  We may assume that $v_2 \neq b_3$.  Observe that $N_C(v_1) = N_C(v_2)= A$.  Then the cycle $pqv_1a_1b_3a_3p$ has chord $pa_1$, and the cycle $w'v_2a_2p'Pw'$ has chord $a_2p^*$, a contradiction.
%
\end{proof}

\begin{lemma}
$G=G_1(n,k)$
\end{lemma}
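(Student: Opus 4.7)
The plan is to show that $G$ is the complete bipartite graph on parts $Y := \Ss \cup \{q, q'\}$ and $X := \Tt$. A count establishes the sizes: $|Y| = 3(k-1) + 2 = 3k - 1$, and since Lemma~\ref{R-P=T} and $\Pp \subseteq T$ give $|T| = |\Pp| + |R \setminus P| = 3 + (n - 6k + 1) = n - 6k + 4$, we get $|X| = |\Tt| = 3(k-1) + |T| = n - 3k + 1$, matching the sizes of the parts of $G_1(n, k)$.

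First I would verify that $Y$ is an independent set. The set $\Ss$ is independent by Proposition~\ref{bipar_sub}; since each of $q, q'$ is adjacent to $p \in \Pp \subseteq \Tt$ by Lemma~\ref{K23}, the same proposition forbids them from having any neighbor in $\Ss$; and $qq' \notin E(G)$, because otherwise $qq'$ would chord the 4-cycle $p q p^* q' p$ of the $K_{2,3}$ on $V(P)$, producing a chorded cycle in $R$. Next I would verify that every vertex of $Y$ is adjacent to every vertex of $X$. Proposition~\ref{bipar_sub} supplies all edges between $\Ss$ and $\Tt$, and Lemma~\ref{K23} supplies the edges from $\{q, q'\}$ to $\Pp$. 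For $v \in R \setminus P$, Lemma~\ref{R-P=T} together with $v \in T$ implies that the two $R$-neighbors of $v$ both lie on $P$, and since $v \in \Tt$ has no neighbor in $\Pp \subseteq \Tt$ by Proposition~\ref{bipar_sub}, these two neighbors must lie in $V(P) \setminus \Pp = \{q, q'\}$.

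The main obstacle is showing that $q$ (and symmetrically $q'$) is adjacent to every vertex $b$ of $\Tt$ lying inside a cycle of $\Cc$; I would handle this by a swap argument exploiting the optimality of $\Cc$. Suppose $qb \notin E(G)$ with $b \in V(C) \setminus \Ss$ for some $C \in \Cc$, and pick any $v \in R \setminus P$, which exists because this section assumes $V(R) \neq V(P)$. Replace $C$ in $\Cc$ by $C' := C - b + v$. Since $N_C(v) = V(C) \cap \Ss$ by Lemmas~\ref{pvnhd} and~\ref{nhdofv}, and since $G[C] \cong K_{3,3}$ by Lemma~\ref{|C|=6} with partite sets $V(C) \cap \Ss$ and $V(C) \setminus \Ss$, the subgraph $G[C']$ is again a copy of $K_{3,3}$, so the new collection $\Cc' := (\Cc \setminus \{C\}) \cup \{C'\}$ has the same vertex count and the same total number of chords as $\Cc$. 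The remainder $R' := G \setminus \Cc' = (R - v) + b$ still contains $P$ as a path, so by the optimality of $\Cc$ under (O3) the longest path in $R'$ is no longer than $P$; hence $\Cc'$ satisfies (O1), (O2), (O3), and since $b \notin V(P)$ we remain in the case $V(R') \neq V(P)$. Every lemma in this section therefore applies to $\Cc'$. As $b \in \Tt$ and Proposition~\ref{bipar_sub} forbids edges within $\Tt$, $b$ has no neighbor in $\Pp \cup (R \setminus P \setminus \{v\})$, so $\{b\}$ is a maximal path in $R' \setminus P$. Lemma~\ref{R-Pv1v2}(b) applied to $\Cc'$ now forces $d_{R'}(b) = 2$, and since the only possible $R'$-neighbors of $b$ lie in $\{q, q'\}$, we obtain $qb \in E(G)$, a contradiction. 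Combining all of the above, $G$ has precisely the edges of $K_{|X|, |Y|}$ between $X$ and $Y$, so $G = G_1(n, k)$.
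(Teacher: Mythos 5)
Your proof is correct, and its skeleton matches the paper's: both identify the parts as $\Ss \cup \{q,q'\}$ and $\Tt$, harvest the $\Ss$--$\Tt$ edges from Proposition~\ref{bipar_sub}, the $\{q,q'\}$--$\Pp$ edges from Lemma~\ref{K23}, and the $\{q,q'\}$--$(R\setminus P)$ edges from $d_R(v)=2$ plus the independence of $\Tt$. The one genuine divergence is the crux step, the edges from $q,q'$ to the cycle vertices of $\Tt$. The paper gets these by a degree count: for $u \in \Tt\setminus T$, nonadjacency to $p$ (both lie in the independent set $\Tt$) gives $d_G(u) \ge 3k-1$ via $\sigma_2$ and Corollary~\ref{mindeg}; since $N_\Cc(u) \subseteq \Ss$ with $|\Ss| = 3(k-1)$, this forces $\|u,R\| \ge 2$, and the only available neighbors are $q_1,q_2$. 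You instead run a swap argument: exchange $b \in V(C)\setminus\Ss$ with $v \in R\setminus P$, verify that the new collection still satisfies (O1)--(O3) (which you do correctly, including the (O3) check that $R'$ still contains $P$ and cannot contain a longer path), and then re-apply Lemma~\ref{R-Pv1v2}\ref{comp T} to the singleton maximal path $\{b\}$ in $R'\setminus P$ to force $d_{R'}(b)=2$, hence $bq, bq' \in E(G)$. This is the same exchange trick the paper itself deploys inside the proof of Proposition~\ref{bipar_sub}, so in a sense you are reusing machinery the paper only invokes one lemma earlier; the paper's count is shorter, while your route is more structural and avoids the explicit degree arithmetic for $\Tt\setminus T$ (at the cost of re-verifying the optimality conditions after the swap). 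A second, cosmetic difference: you finish by directly checking that both parts are independent (including $qq' \notin E(G)$ via the chorded $4$-cycle in $K_{2,3}$), whereas the paper stops at $G \supseteq G_1(n,k)$ and invokes the edge-maximality of $G_1(n,k)$; both closings are valid, and your size count $|\Tt| = n-3k+1$ agrees with the paper's.
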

\begin{proof}
By Lemma \ref{K23}, let $\{p_1,p_2,p_3\}$ and $\{q_1,q_2\}$ denote the partite sets of $G[P]$.  Recall that $\Pp \subseteq T$ so that  $G[\Ss \cup \Tt]$ contains every vertex of $G$ except for $q_1$ and $q_2$.  

By  Lemmas \ref{pvnhd} and \ref{R-P=T} and Corollary \ref{mindeg}, $\|v,P\| = 2$ for all $v \in R\setminus P$, and by Proposition~\ref{bipar_sub}, $N_R(v) = \{q_1,q_2\}$.  Since $\Tt$ is an independent set in $G$, for each $u \in \Tt\setminus T$, $\|u,R\| \ge (3k - 1) - 3(k - 1) = 2$.  Thus, $uq_1, uq_2 \in E(G)$, and so $N_G(q_i) \supseteq \Tt$ for $i \in [2]$.  That is, $G \supseteq K_{|\Ss|+2,|\Tt|}=K_{3k-1,|G|-3k+1}=G_1(n,k)$. Since adding any edge to $G_1(n,k)$ results in a graph with $k$ disjoint chorded cycles, we conclude $G=G_1(n,k)$.
\end{proof}

\section{Suppose $V(R)=V(P)$} \label{long path}

In this section, we assume $V(P)=V(R)$.  Since adding any edge to $G$ results in $k$ chorded cycles, by (O1) $|P| \geq 4$.  If $|P| \ge 6$, we label $P=p_1q_1r_1\cdots r_2q_2p_2$.
Note that, since $G[R]$ has no chorded cycles,
for every $v \in R$, $\|r, R\| \le 4$.
When $|P| = 5$, we let $P = p_1q_1rq_2p_2$, and when
$|P| = 4$, we let $P = p_1q_1q_2p_2$.
We call an edge in $E(G[P])\setminus E(P)$ a \emph{hop}.  
If $Q = v_1 \dotsm v_{|R|}$ is a spanning path of $R$, then we call an edge $v_iv_j$ a \emph{hop (on $Q$)} if $|i - j| > 1$.

\begin{lemma}\label{hops}
  If $Q = v_1 \dotsm v_{|R|}$ is a spanning path of $R$ and $v_iv_j$ is a hop
  with $i < j$, 
  then $v_{i+1}$ and $v_{i+2}$ cannot both be incident to hops, and
  similarly, $v_{j-1}$ and $v_{j-2}$ cannot both be incident to hops.
\end{lemma}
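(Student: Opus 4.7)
The plan is to derive a contradiction with the fact that $R$ contains no chorded cycle, by exhibiting one whenever both $v_{i+1}$ and $v_{i+2}$ are incident to hops of $Q$. The symmetric statement about $v_{j-1}$ and $v_{j-2}$ then follows by applying the first statement to the reversed spanning path $v_{|R|}v_{|R|-1}\dotsm v_1$.

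Suppose $v_{i+1}v_a$ and $v_{i+2}v_b$ are hops of $Q$, with the convention that when $j=i+2$ the vertex $v_{i+2}=v_j$ is automatically incident to the hop $v_iv_j$ and only $v_{i+1}v_a$ needs to be assumed. The cycle $C_{ij}:=v_iv_{i+1}\dotsm v_jv_i$ lies in $R$ and is therefore chordless, so every edge of $R$ with both endpoints in $\{v_i,\dots,v_j\}$ is either consecutive on $Q$ or is $v_iv_j$ itself. Consequently, $a,b\in\{1,\dots,i-1\}\cup\{j+1,\dots,|R|\}$ when $j\geq i+3$, and $a\notin\{i,i+1,i+2\}$ when $j=i+2$.

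I then split into three cases. If $j\geq i+3$ and $a,b$ lie on the same side of $[i,j]$---say both exceed $j$, the other case being symmetric---then concatenating the edge $v_iv_{i+1}$, the hop $v_{i+1}v_a$, the subpath of $Q$ between $v_a$ and $v_b$, the hop $v_bv_{i+2}$, the subpath $v_{i+2}v_{i+3}\dotsm v_j$, and the hop $v_jv_i$ produces a cycle in $R$ whose vertex indices lie in the pairwise disjoint sets $\{i,i+1\}$, $\{i+2,\dots,j\}$, and $[\min(a,b),\max(a,b)]$; the path edge $v_{i+1}v_{i+2}$ is a chord, because on this cycle the neighbors of $v_{i+1}$ are $v_i,v_a$ and those of $v_{i+2}$ are $v_b,v_{i+3}$. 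If instead $j\geq i+3$ and $a,b$ straddle $[i,j]$, WLOG with $b<i<j<a$, then the cycle
\[
v_iv_{i-1}\dotsm v_bv_{i+2}v_{i+1}v_av_{a-1}\dotsm v_jv_i
\]
has vertex indices in the disjoint ranges $[b,i]$, $\{i+1,i+2\}$, and $[j,a]$, and admits $v_iv_{i+1}$ as a chord, since the cycle-neighbors of $v_i$ are $v_{i-1},v_j$ and those of $v_{i+1}$ are $v_{i+2},v_a$. Finally, if $j=i+2$ and (WLOG) $a>i+2$, then $v_iv_{i+1}v_av_{a-1}\dotsm v_{i+2}v_i$ is a cycle in $R$ with chord $v_{i+1}v_{i+2}$.

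The main obstacle is the case analysis itself: in each configuration one must traverse the relevant $Q$-segments in the correct direction to avoid repeating vertices, and correctly identify which of the path edges $v_{i+1}v_{i+2}$ or $v_iv_{i+1}$ supplies the chord of the resulting cycle.
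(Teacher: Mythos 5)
Your argument is correct in substance but organized quite differently from the paper's, and it contains one unjustified reduction. The paper avoids your case analysis with a sequential ``push right'' argument: using only the base hop $v_iv_j$ and the hop $v_{i+1}v_k$, it first shows $k > j$ (if $i+3 \le k \le j$ then $v_jv_iv_{i+1}Qv_j$ is a cycle with chord $v_{i+1}v_k$; if $k \le i-1$ then $v_kQv_iv_jQv_{i+1}v_k$ is a cycle with chord $v_iv_{i+1}$); it then reapplies the same claim with $v_{i+1}v_k$ as the base hop to force $k' > k$, and the single cycle $v_iv_{i+1}v_{i+2}v_{k'}Qv_jv_i$ with chord $v_{i+1}v_k$ finishes. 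This route needs only three short steps, handles $j = i+2$ uniformly (there $k' = i$ falls into the left-landing branch of the second application), and disposes of all left-landing configurations en route, whereas you classify the landing positions of both hops relative to $[i,j]$ and build a bespoke chorded cycle in each configuration. What your version buys is explicitness; what it costs is exactly the delicacy you flag at the end, and in one spot the cost is real.

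The flaw: in the straddle case your ``WLOG $b < i < j < a$'' is not justified by any symmetry you have established. Reversing the path does not interchange the two straddle configurations: under the relabeling $v_m \mapsto v_{|R|+1-m}$, the configuration $a < i < j < b$ (hop at $v_{i+1}$ going left, hop at $v_{i+2}$ going right) becomes a configuration attached to the \emph{right} end of the base hop, i.e., an instance of the second statement of the lemma --- which you in turn derive from the first statement by reversal, so the reduction is circular. (Formally the same objection applies to ``the other case being symmetric'' in your same-side case, but there it is harmless, since your stated cycle and the disjointness of $\{i,i+1\}$, $\{i+2,\dots,j\}$, $[\min(a,b),\max(a,b)]$ hold verbatim when $a,b < i$.) The repair is immediate: for $a < i < j < b$, take the cycle $v_{i+1}v_aQv_iv_jQv_bv_{i+2}v_{i+1}$ --- hop to $v_a$, path up to $v_i$, hop to $v_j$, path up to $v_b$, hop to $v_{i+2}$, path edge back --- whose vertex indices lie in the disjoint sets $[a,i]$, $[j,b]$, $\{i+1,i+2\}$, and which admits the path edge $v_iv_{i+1}$ as a chord, since its cycle-neighbors of $v_i$ are $v_{i-1}$ (or $v_a$) and $v_j$, and those of $v_{i+1}$ are $v_a$ and $v_{i+2}$. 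Your Case 3 WLOG, by contrast, is fine: reversal does fix the configuration of a length-two hop together with its middle vertex, carrying $a \le i-1$ to the case $a > i+2$ you treated. All of your explicit cycles and chord verifications check out, including the degenerate possibility $a = b$ in the same-side case.
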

\begin{proof}
  Suppose that, on the contrary, $v_{i+1}v_k$ and $v_{i+2}v_{k'}$ are both hops.
  Note that, if we consider only the hop $v_iv_j$ and the hop $v_{i+1}v_k$,
  $v_jv_iv_{i+1}Qv_j$ is a chorded cycle if $i+3 \le k \le j$, 
  and $v_kQv_iv_jQv_{i+1}v_k$ is a chorded cycle if $k \le i-1$,
  so $k > j$.
  Repeating this argument but now only considering
  the hops $v_{i+1}v_k$ and $v_{i+2}v_{k'}$
  gives us that $k' > k$, but then
  $v_iv_{i+1}v_{i+2}v_{k'}Qv_jv_i$ is a cycle with chord $v_{i+1}v_k$,
  a contradiction.
  By symmetry, the lemma holds.
\end{proof}

\begin{lemma}\label{pchords}
  For any $p \in \mathcal{P}$, $d_R(p) = 2$ unless $R$ is a path.
\end{lemma}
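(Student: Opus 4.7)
I plan to argue by contradiction: assume some $p \in \mathcal{P}$ has $d_R(p) \ne 2$ while $R$ is not a path. The preliminary observations of Section~\ref{setup} give $1 \le d_R(p) \le 2$ for every $p \in \mathcal{P}$ (upper bound because $\|p,P\| \ge 3$ would force a chorded cycle in $G[P]$; lower bound because $p$ is adjacent to its successor on some spanning path), so we must have $d_R(p) = 1$. Fix a spanning path $P' = p v_2 v_3 \cdots v_m$ of $R$ with $p$ as one endpoint. Because $R$ is not a path, $P'$ has at least one hop $v_a v_b$ with $b \ge a + 2$; and $a \ge 2$, since $d_R(p) = 1$ forbids any hop from $p$ itself.

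The two key ingredients I would use are the $\sigma_2$ bound and a swap argument. Since $d_R(p) = 1$, the vertex $p$ has no neighbor in $R$ besides $v_2$; in particular $p v_m \notin E(G)$ (note $v_m \ne v_2$, as $|R| \ge 4$ in this section), so the $\sigma_2$ hypothesis yields
\[
d_G(p) + d_G(v_m) \ge 6k - 2.
\]
Using $d_G(p) = 1 + \|p,\mathcal{C}\|$, $d_G(v_m) \le 2 + \|v_m,\mathcal{C}\|$, and $\|x,C\|\le 4$ from Lemma~\ref{RCedges}, this forces $p$ and $v_m$ to have near-maximum combined adjacency into $\mathcal{C}$: at minimum $\|p,\mathcal{C}\|+\|v_m,\mathcal{C}\| \ge 6k-5$. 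The swap step then says: for each $C \in \mathcal{C}$, if some $c \in V(C)$ has two or more neighbors in $\{v_a, v_{a+1}, \ldots, v_b\}$, the hop $v_a v_b$ together with $c$ completes to a chorded cycle on $\{c\} \cup \{v_a, \ldots, v_b\}$, while the large adjacency of $p$ into $C$ guaranteed by the degree inequality (via Lemma~\ref{RCedges}) supplies a chorded cycle on $(C - c) \cup \{p\}$ disjoint from the first. Together with $\mathcal{C} \setminus \{C\}$, this would produce $k$ disjoint chorded cycles in $G$, a contradiction; hence $\|c, \{v_a, \ldots, v_b\}\| \le 1$ for every such $c$.

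Summing the per-cycle swap bound over $\mathcal{C}$ gives an upper bound on $\|p,\mathcal{C}\| + \|v_m,\mathcal{C}\|$ that I would show is strictly less than $6k-5$, producing the desired contradiction --- a direct generalization of the calculation one obtains in the $|R|=4$ warm-up case (where $\|c, \{v_2, v_3, v_4\}\| \le 1$ combined with $\sigma_2 \ge 6k-2$ is already incompatible).

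The main obstacle will be executing the swap uniformly across the structural cases of Lemma~\ref{RCedges} (namely $G[C] \cong K_4$, singly chorded $5$-cycle, or $G[C] \cong K_{3,3}$): for each case I must verify that the chorded cycle on $(C-c)\cup\{p\}$ really exists and that the chord in the hop cycle on $\{c\}\cup\{v_a,\ldots,v_b\}$ matches up correctly. Lemma~\ref{hops} constrains the positions of additional hops near $v_a v_b$, which I would invoke to ensure the ``hop cycle'' in $R$ is truly chordless before the swap, and Lemmas~\ref{C4bounds} and~\ref{P2P3} should supply the quantitative edge counts needed to close each subcase.
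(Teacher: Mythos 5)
Your opening reductions are fine ($d_R(p)=1$, a hop $v_av_b$ with $a\ge 2$, and the non-edge $pv_m$ giving $\|p,\mathcal C\|+\|v_m,\mathcal C\|\ge 6k-5$), but the core of the plan has two genuine gaps. First, the per-cycle swap claim is false as stated: if $c\in V(C)$ has two neighbors $v_s,v_t$ in $\{v_a,\dots,v_b\}$, the graph on $\{c\}\cup\{v_a,\dots,v_b\}$ need not contain a chorded cycle. When $v_s$ and $v_t$ are not consecutive on the cycle $v_a\dotsm v_bv_a$, what you get is a theta graph (three internally disjoint $v_s$--$v_t$ paths), and theta graphs have no chorded cycle; you would need $c$ to be a common neighbor of two vertices that are \emph{adjacent} on that cycle, which two arbitrary neighbors do not supply. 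Second, even granting the swap, your bookkeeping does not close: the $\sigma_2$ pair you chose is $(p,v_m)$, so your degree inequality controls $\|p,\mathcal C\|+\|v_m,\mathcal C\|$, whereas the swap conclusion only bounds edges from the hop segment $\{v_a,\dots,v_b\}$ into each $C$ --- and $v_m$ need not lie in that segment, nor does the negation of the swap hypothesis give any numeric bound on $\|p,C\|$ or $\|v_m,C\|$ (it is a disjunction: either $\|p,C\|$ is small or no vertex of $C$ doubles up on the segment). So ``summing the per-cycle swap bound'' cannot produce an upper bound on the quantity $6k-5$ bounds from below.

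The paper's proof repairs exactly these two defects by a more careful choice of the hop and of the $\sigma_2$ partners. It picks the hop $v_iv_j$ so that no vertex after $v_j$ is incident to a hop; then $v_j$ lies on exactly one hop, and using Lemma~\ref{hops} it selects an edge $xy$ of the hop cycle $D$ with $\{x,y\}\subseteq\{v_{j-2},v_{j-1},v_j\}$, $d_R(x)+d_R(y)\le 5$, and $px,py\notin E(G)$. Applying $\sigma_2$ to the pairs $(p,x)$ and $(p,y)$ gives the weighted count $2\|p,\mathcal C\|+\|\{x,y\},\mathcal C\|>12(k-1)$, so a single $C$ satisfies $2\|p,C\|+\|\{x,y\},C\|\ge 13$; this forces $G[C]\cong K_4$ (Lemma~\ref{RCedges}), and then $\|\{x,y\},C\|\ge 5$ pigeonholes a vertex $c\in C$ adjacent to \emph{both} ends of the edge $xy$ --- so $D+c$ is chorded by rerouting through $c$ with chord $xy$ (this is where adjacency of $x$ and $y$ is essential, avoiding your theta-graph problem) --- while $2\|p,C\|\ge 5$ gives $\|p,C-c\|\ge 2$ and a chorded cycle in $C-c+p$, yielding $k$ disjoint chorded cycles. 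To fix your write-up you would need both the rightmost-hop selection (to manufacture an adjacent low-degree pair on $D$ nonadjacent to $p$) and the doubled weighting on $p$; the endpoint $v_m$ is the wrong partner.
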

\begin{proof}
  Let $v_1 \dotsm v_{|R|}$ be  a spanning path in $R$, and let $p = v_1$.  
  Assume $d_R(p) = 1$, and that $R$ is not a path.  
  Since $R$ is not a path, hops exist. Let $v_iv_j$, $i < j$, be a hop such that 
 for all $k, j < k \le |R|$, $v_k$ is not incident to a hop. 
  Note, because $d_R(p) = 1$,  that $i \neq 1$.

  Let $D$ be the cycle $v_jv_iv_{i+1} \dotsm v_{j-1}v_j$.
  Since $R$ contains no chorded cycles, 
  $v_j$ is incident to exactly one hop
  and $v_{j-1}$ is incident to at most one hop.  
  If $v_{j-1}$ is not incident to a hop let $x=v_{j-1}$ and $y=v_{j}$,
  and if $v_{j-1}$ is incident to exactly one hop, let $x = v_{j-2}$ and $y = v_{j-1}$.
  By Lemma~\ref{hops}, when $v_{j-1}$ is incident to a hop, $v_{j-2}$ is not incident to a hop, 
  so in either case, 
  $xy \in E(D)$, $d_R(x) + d_R(y) \le 5$, and $px,py \notin E(G)$.
  Therefore, 
  \begin{equation*}
    2\|p, \mathcal C\| + \|\{x,y\}, \mathcal C\| \ge 2(6k - 2) - (2\|p, R\| + \|\{x,y\}, R\|) > 12(k - 1).
  \end{equation*}
  So there exists $C \in \mathcal C$ such that $2\|p, C\| + \|\{x,y\}, C\| \ge 13$.  Thus, $\|v,C\| = 4$ for some $v \in \{p,x,y\}$, and
  by Lemma~\ref{RCedges}, $G[C] \cong K_4$. Further, $\|\{x,y\}, C\| \ge 5$ so that there exists
  $c \in C$ such that $xc,yc \in E(G)$ and $D + c$ contains a chorded cycle. 
  Also $2\|p, C\| \ge 5$, which implies $\|p, C - c\| \ge 2$ so that $C - c + p$ contains a chorded cycle, a contradiction.
\end{proof}

\begin{lemma}\label{R6toC}
  If $|R| \ge 6$, then there exists $F^+ \subseteq V(R)$ such that $|F^+| = 6$ and such that
  for every $C \in \Cc$ and every pair of distinct vertices $u, u' \in F^+$, 
  $\|\{u,u'\}, C\| \ge 1$.
\end{lemma}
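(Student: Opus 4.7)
My plan is to take $F^+ := \{p_1, q_1, r_1, r_2, q_2, p_2\}$, the three outermost vertices at each end of the spanning path $P = p_1 q_1 r_1 \cdots r_2 q_2 p_2$ (well-defined since $|R| = |P| \ge 6$). Writing $Z_C := \{v \in V(R) : \|v,C\| = 0\}$, the conclusion of the lemma is equivalent to $|F^+ \cap Z_C| \le 1$ for every $C \in \Cc$, so I would argue by contradiction. A first observation is that every $v \in Z_C$ satisfies $d_G(v) = \|v,R\| + \|v,\Cc \setminus C\| \le 4 + 4(k-2) = 4k-4$ (using $\|v,R\| \le 4$ since $R$ has no chorded cycle, and $\|v,C'\| \le 4$ for every $C'$ by Lemma~\ref{RCedges}).

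Suppose then that $u, u' \in F^+ \cap Z_C$ for some $C \in \Cc$. Consider first the \emph{opposite-end case} in which $u$ lies in $\{p_1, q_1, r_1\}$ and $u'$ in $\{r_2, q_2, p_2\}$. Such $u$ and $u'$ are non-adjacent in $G$: otherwise the edge $uu'$ together with the subpath of $P$ from $u$ to $u'$ would yield a cycle in $R$ whose internal $P$-edges would be chords, contradicting the fact that $R$ has no chorded cycle. Hence $\sigma_2(G) \ge 6k-2$ applies, and combined with $d(u) + d(u') \le 8k - 8$ it forces $k \ge 3$ with near-equality throughout; in particular $\|v,R\| = 4$ and $\|v,C'\| = 4$ for every $C' \in \Cc \setminus C$ and every $v \in \{u, u'\}$. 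By Lemma~\ref{RCedges}, each such $C'$ is then a $K_4$, and swapping $u$ with a suitable vertex of one such $C'$ produces a new collection of $k-1$ chorded cycles that either violates one of (O1)--(O3) or leaves the resulting $R$ containing a chorded cycle, a contradiction.

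The main obstacle is the \emph{same-end case}, when both $u$ and $u'$ lie in $\{p_1, q_1, r_1\}$ (or both in $\{r_2, q_2, p_2\}$), because then $u, u'$ may be adjacent in $G$ and $\sigma_2$ does not apply directly to the pair. In this case I would instead pair each of $u, u'$ with opposite-end vertices of $F^+$---to which they are non-adjacent by the argument of the previous paragraph---to obtain multiple $\sigma_2$-based lower bounds on $\sum_{v \in F^+} d(v)$, and combine these with upper bounds on $\|F^+, \Cc\|$ arising from Lemma~\ref{C4bounds} applied to 4-subsets of $F^+$ (each 4-subset gives $\|F,C\| \le 12$). Using $d_R(p) \le 2$ for $p \in \Pp$ (Lemma~\ref{pchords}) and the hop restrictions (Lemma~\ref{hops}) should sharpen the degree bounds enough to reach a contradiction; I expect a further subcase split by $|C| \in \{4,5,6\}$ versus $|C| \ge 7$ (governed by Lemma~\ref{RCedges}) will be needed to close out the analysis.
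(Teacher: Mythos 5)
Your proposal has genuine gaps at all three of its load-bearing steps. First, the non-adjacency claim in the opposite-end case is false as argued: if $uu' \in E(G)$, the edge $uu'$ together with $uPu'$ forms a cycle whose internal $P$-edges are \emph{edges of that cycle}, not chords, so no chorded cycle arises. Indeed opposite-end pairs can be adjacent (e.g.\ $p_1p_2$ when $R$ is a chordless cycle), so $\sigma_2$ does not automatically apply to such pairs. Second, even granting non-adjacency, the deduction ``$6k-2 \le d(u)+d(u') \le 8k-8$ forces near-equality'' only works at $k=3$; for $k \ge 4$ there is slack $2k-6>0$, so you cannot conclude $\|v,R\|=4$ and $\|v,C'\|=4$ for all $C' \in \Cc \setminus C$, and the swapping argument you sketch has no foothold. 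Third, the same-end case --- which you correctly identify as the main obstacle --- is left as a plan (``should sharpen\ldots I expect\ldots will be needed''), not a proof, so the argument is incomplete precisely where it is hardest.

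The paper's proof avoids pairwise case analysis entirely and runs a global count, and its ingredients show where your sketch would need repair. It first chooses $F^+$ so that $\|F^+,R\| \le 15$; note your fixed choice $\{p_1,q_1,r_1,r_2,q_2,p_2\}$ can fail this when $d_R(r_1)=d_R(r_2)=4$, which is why the paper then replaces $r_1$ by a later vertex $u$ with $d_R(u) \le 3$ (using Lemmas~\ref{hops} and \ref{pchords}). Next, instead of claiming all cross pairs are non-adjacent, it proves the weaker but sufficient fact that $F^+$ can be split into \emph{three disjoint non-adjacent pairs} (each end-triple sends at most two edges to each vertex of the other triple, since $R$ has no chorded cycle, so a matching of non-edges exists). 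Summing $\sigma_2(G) \ge 6k-2$ over these three pairs gives $\|F^+,\Cc\| \ge 3(6k-2)-15 = 18(k-1)-3$, and two applications of Lemma~\ref{C4bounds} (any $4$-subset of a path sends at most $12$ edges to any $C$) force $\|F^+,C\| \ge 15$ for \emph{every} $C \in \Cc$; then a pair $u,u'$ with $\|\{u,u'\},C\|=0$ would leave four path vertices sending $\ge 15 > 12$ edges to $C$, an immediate contradiction. Your $Z_C$ reformulation and the bound $d_G(v) \le 4k-4$ are fine as far as they go, but bounding vertices of $Z_C$ one pair at a time is structurally too weak to close the $k \ge 4$ and same-end cases; the aggregated counting over three forced non-edges is the missing idea.
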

\begin{proof}
  First we find $F^+ \subseteq V(R)$ such that $\|F^+, R\| \le 15$. If $R$ is a path, this is trivial, so we assume $R$ has at least one hop. 
  By Lemmas \ref{hops} and \ref{pchords}, $p_i$ is incident to a hop so that $q_i$ and $r_i$ cannot both be incident to hops.  
  If $d_R(r_i) \le 3$ for some $i \in [2]$, then since $d_R(q_i) \le 3$ and $d_R(p_i) = 2$ by Lemma \ref{pchords}, $\|\{p_i,q_i,r_i\}, R\| \le 7$.  If $d_R(r_i) = 4$, then $d_R(p_i) = d_R(q_i) = 2$, so that $\|\{p_i,q_i,r_i\},R\| \le 8$.  Therefore, $F^+ := \{p_1, q_1, r_1, r_2, q_2, p_2\}$ suffices when either $d_R(r_1) \le 3$ or  $d_R(r_2) \le 3$. 
  In this case, we let $r^*_1 = r_1$.

  When $d_R(r_1) = d_R(r_2) = 4$, $|R| \ge 7$, since $R$ has no chorded cycles, and there exists a vertex $u$ following $r_1$ on $P$ with $d_R(u) \le 3$.  Here, we let $F^+ := \{p_1,q_1, u, r_2, q_2,p_2\}$.
  and let $r^*_1 = u$.
  Thus,  in both cases, $F^+ = \{p_1,q_1,r^*_1, r_2, q_2,p_2\}$.
  
We claim that we can partition $F^+$ into three sets so that each set will consist of two nonadjacent vertices. Define $F_1:=\{p_1,q_1,r_1^*\}$
and $F_2:=\{p_2,q_2,r_2\}$, and let $H$ be the subgraph of $G$ on the vertex set $F^+$ containing precisely those edges of $G$ with one endpoint in $F_1$ and the other in $F_2$.
Because $R$ contains no chorded cycle, every vertex in $F_2$ has at most two neighbors in $F_1$, and vice-versa. That is, $H \subseteq 3K_2$. Therefore we can label $F_1=\{f_1,f_2,f_3\}$ so that 
$f_1p_2, $ $f_2q_2$, and $f_3r_2 $ are all nonedges.

  
  Therefore, $\|F^+, \Cc\| \ge 3(6k - 2) - 15 = 18(k-1) - 3$.  Suppose there exists $C \in \Cc$ for which $\|F^+, C\| \le 14$ so that there exists $C' \in \Cc$ such that $\|F^+, C'\| \ge 19$.  If we can find $v_1, v_2 \in F^+$ such that $\|\{v_1,v_2\}, C' \| \le 6$, then $\|F' - v_1 - v_2, C'\| \ge 13$, contradicting Lemma \ref{C4bounds}.  So for $F^+ = \{v_1,v_2, \dots, v_6\}$, $\|\{v_i, v_{i+1}\}, C'\| \ge 7$ for $i \in \{1,3,5\}$.  However this implies $\|\{v_1,v_2,v_3,v_4\}, C'\| \ge 14$, a contradiction to Lemma \ref{C4bounds}.  

Thus, $\|F^+, C\| \ge 15$ for every $C \in \Cc$.
  If there exists a pair of distinct vertices $u, u' \in F^+$ such that
  $\|\{u,u'\}, C\|  = 0$, then $\|F^+ - u - u', C\| \ge 15$, again a violation of Lemma~\ref{C4bounds}.
\end{proof}

\begin{lemma}\label{struct1}
There exists $F \subseteq V(R)$ such that $p_1, p_2 \in F$, $|F| = 4$ and 
  \begin{enumerate}[label=\textup{(\alph*)}, ref={(\alph*)}]
      \item \label{Fdeg} $\|F, \Cc\| \ge 12(k-1) - 2$ if $R \cong K_{2,3}$, $\|F, \Cc\| \ge 12(k-1) + 2$ if $R$ is a path, 
	and $\|F, \Cc\| \ge 12(k-1)$ otherwise, and 
      \item \label{paths} if $R$ is not a path, then for every $u \in F$, there exists a path $Q$ in $R - u$ such that $F - u \subseteq V(Q)$.
  \end{enumerate}
\end{lemma}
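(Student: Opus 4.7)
The strategy is to choose a four-element set $F = \{p_1, p_2, f_3, f_4\} \subseteq V(R)$ admitting a partition into two pairs of pairwise non-adjacent vertices. Since $\sigma_2(G) \ge 6k-2$, this yields $\|F, G\| \ge 2(6k-2) = 12k-4$, and because $\|F,\Cc\| = \|F,G\| - \|F,R\|$, the three bounds in (a) correspond to achieving $\|F, R\| \le 6$, $10$, and $8$ in the respective cases.

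When $R$ is a path $(|R| \ge 4)$, $p_1, p_2$ have $d_R=1$ and the internal vertices of $P$ have $d_R=2$. If $|R|=4$ I take $F=V(R)$ with pairing $(p_1,q_2),(q_1,p_2)$; otherwise I choose $f_3, f_4$ to be two internal vertices of $P$ that are non-adjacent on $P$, with pairing $(p_1,p_2),(f_3,f_4)$. In either subcase both pairs are non-adjacent, $\|F, R\| \le 1+1+2+2 = 6$, and (b) is vacuous. When $R \cong K_{2,3}$, a parity argument shows that any Hamiltonian path of $K_{2,3}$ has both endpoints in the size-$3$ part, so I take $F = \{p_1, p_2\} \cup A$ where $A$ is the size-$2$ part. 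The two parts of the bipartition give two non-adjacent pairs, $\|F, R\| = 2+2+3+3 = 10$, and (b) is a direct check: removing any one vertex of $K_{2,3}$ leaves a connected graph through which a path covers the remaining three vertices of $F$.

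In the remaining case, Lemma~\ref{pchords} gives $d_R(p_1)=d_R(p_2)=2$, so each $p_i$ is incident to exactly one hop. Applying Lemma~\ref{hops} to the hop at $p_1$ shows that at least one of $q_1, r_1$ is not incident to any hop and hence has $d_R = 2$; I take this as $f_3$, and symmetrically choose $f_4 \in \{q_2,r_2\}$ with $d_R(f_4) = 2$ (when $|R|=5$, so that $r_1 = r_2$, distinctness of $f_3$ and $f_4$ is verified directly). This yields $\|F, R\| \le 2+2+2+2 = 8$. The pairing $(p_1,p_2),(f_3,f_4)$ is non-adjacent except when $p_1p_2$ is itself the hop at $p_1$ (i.e.\ $R$ is a cycle through $p_1 p_2$); in that exceptional situation I switch to the pairing $(p_1,f_4),(p_2,f_3)$, which is non-adjacent because the only $R$-neighbors of $p_1$ are $q_1$ and $p_2$, and symmetrically for $p_2$. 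For condition (b), removing $p_i$ is handled by the path $P - p_i$. To handle removing $f_3$, I use the hop $p_1 v_k$, which by the choice of $f_3$ lands strictly past $f_3$ on $P$, so $p_1 v_k P p_2$ is a path in $R - f_3$ containing $p_2$ as well as $f_4 \in \{q_2,r_2\}$, since $f_4$ lies near $p_2$; the case $u=f_4$ is symmetric.

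The main obstacle will be in this last case, where one must simultaneously guarantee the degree bound, the non-adjacent pairing, and the rerouted path for (b). The tight spots are when the hops at $p_1$ or $p_2$ land very close to those endpoints (so the rerouted path might miss $f_4$, or the natural candidates for $f_3, f_4$ could collide), but such configurations force $|R|$ to be small and either reduce to $R \cong K_{2,3}$, to a small cycle on $\le 5$ vertices, or to a handful of specific graphs that are each verified by inspection.
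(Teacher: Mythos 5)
Your path case and $K_{2,3}$ case are fine and essentially match the paper's (the paper uses $F=\{p_1,q_1,q_2,p_2\}$ in both). In the main case your degree count is also sound --- indeed your pairing worry is avoidable, since the ``switched'' pairing $(p_1,f_4),(p_2,f_3)$ is \emph{always} non-adjacent once $f_3,f_4$ are hop-free and distinct from $q_2,q_1$ respectively, because the only $R$-neighbors of $p_i$ are $q_i$ and the landing vertex of its unique hop. The genuine gap is in your verification of (b) for $u=f_3$. Your rerouted path $p_1v_kPp_2$ contains $f_4$ only when the hop at $p_1$ lands at or before $f_4$; when it lands at $q_2$ or $p_2$ the path misses $f_4$. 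You claim such configurations force $|R|$ to be small, but that is false: for every $m\ge 7$, let $R$ be the path $v_1\dotsm v_m$ plus the two hops $v_1v_{m-1}$ and $v_3v_m$. This graph has exactly three cycles ($v_1\dotsm v_{m-1}v_1$, $v_3\dotsm v_mv_3$, and $v_1v_2v_3v_mv_{m-1}v_1$), none chorded, and it satisfies $d_R(p_1)=d_R(p_2)=2$. Here $q_2=v_{m-1}$ is hopped, so your rules give $f_3=q_1=v_2$ and $f_4=r_2=v_{m-2}$, and for $u=f_3$ your path is $v_1v_{m-1}v_m$, which omits $f_4$. The conclusion (b) is still true there --- $v_1v_{m-1}v_{m-2}\dotsm v_3v_m$ is a path in $R-v_2$ through $F-f_3$ --- but your argument does not produce it, and nothing in your proposal repairs this for large $|R|$. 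Separately, the ``handful of specific graphs verified by inspection'' is asserted rather than done; e.g.\ when $|R|=5$ your $f_3$ and $f_4$ can collide at $r$ unless one rules out both $q_1$ and $q_2$ being hopped, which requires exactly the chorded-cycle/$K_{2,3}$ case analysis you defer.

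The paper avoids this entire difficulty with a different choice of $F$: letting $w_i$ be the landing vertex of the hop at $p_i$ and $t_i$ the neighbor of $w_i$ on $w_iPp_i$, each $t_i$ is an endpoint of a spanning path of $R$ (reroute $P$ through the hop $p_iw_i$), i.e.\ $t_i\in\Pp$. Taking $F=\{p_1,t_1,t_2,p_2\}\subseteq\Pp$ when $t_1\neq t_2$ (and $F=\{p_1,q_1,t_1,p_2\}$ when $t_1=t_2$, where all degrees are forced to be $2$) makes (b) automatic --- delete the endpoint of a spanning path --- so only the non-adjacent pairing needs a short check. In your counterexample family the paper's choice is $\{v_1,v_{m-2},v_4,v_m\}$, and (b) holds trivially. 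To salvage your approach you would need to replace $f_3,f_4$ by vertices of $\Pp$ (or prove a rerouting lemma covering hops that land at $q_2$ or $p_2$), which in effect reconstructs the paper's argument.
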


\begin{proof}
  If $R$ is a path or $R \cong K_{2,3}$,  let $F := \{p_1, q_1, q_2, p_2\}$.   When $R$ is a path, $\|F, R\| = 6$, and $p_1q_2, p_2q_1 \notin E(G)$; when $R \cong K_{2,3}$, $\|F, R\| = 10$, and $p_1p_2, q_1q_2 \notin E(G)$.  In both cases,  \ref{Fdeg} and \ref{paths} hold.

So we assume $R \not\cong K_{2,3}$ and $R$ is not a path.
By Lemma~\ref{pchords}, for $i \in [2]$, $\|p_i,P\| = 2$.  Thus, $p_i$ has a neighbor $w_i \in P - q_i$.  
  Let $t_i$ denote the neighbor of $w_i$ on $w_iPp_i$.  Observe that $t_i \in \Pp$, so by Lemma~\ref{pchords}, $\|t_i, P\| = 2$. 
  Suppose $t_1 \neq t_2$, and,
  in this case, let $F:=\{p_1,t_1,t_2,p_2\}$.
  Then $F \subseteq \mathcal P$, so \ref{paths} holds and $\|F,R\| \le 8$. If either $p_1t_1,p_2t_2 \not\in E(G)$ or $p_1t_2, p_2t_1 \not\in E(G)$, then \ref{Fdeg} holds. Suppose (say) $p_1t_1 \in E(G)$. Then $t_1=q_1$, and $t_1p_2 \not\in E(G)$. Then $w_2 \not\in \{p_1,t_1\}$, hence $t_2 \not\in \{t_1,w_1\} = N_R(p_1)$, so also $p_1t_2 \not\in E(G)$. So in this case also, \ref{Fdeg} holds.

  So assume $t_1 = t_2$, which implies $\|u, P\| = 2$ for all $u \in V(P) - w_1 - w_2$, as otherwise $R$ contains a chorded cycle.
  Also, when $t_1 = t_2$, we may assume that $q_1 \neq w_2$ since $R$ is not isomorphic to $K_{2,3}$.
  In this case, let $F : = \{p_1, q_1, t_1, p_2\}$ and note that $p_1t_1, q_1p_2 \notin E(G)$.  Since $d_R(u) = 2$ for all $u \in F$, \ref{Fdeg} holds.  Since $t_1 = t_2$, $p_1w_1t_1w_2p_2$ is a path in $R- q_1$ containing $F - q_1$ 
  and $F - q_1 \subseteq \Pp$, \ref{paths} holds.
\end{proof}

\begin{cor}
$R$ is not a path.
\end{cor}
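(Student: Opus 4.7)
The plan is to derive a contradiction by comparing the lower bound from Lemma~\ref{struct1} with the upper bound from Lemma~\ref{C4bounds}. Assume for contradiction that $R$ is a path; since $|R| = |P| \ge 4$ (as noted at the start of Section~\ref{long path}, because adding any edge produces $k$ disjoint chorded cycles and $|P| \ge 4$ follows from (O1)), $R$ itself is a path on at least four vertices.

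Apply Lemma~\ref{struct1} to obtain a set $F \subseteq V(R)$ with $|F| = 4$ and
\begin{equation*}
  \|F, \Cc\| \ge 12(k-1) + 2,
\end{equation*}
using the ``$R$ is a path'' branch of conclusion~\ref{Fdeg}. On the other hand, since $R$ is itself a path with $|R| \ge 4$ and $F \subseteq V(R)$, the first statement of Lemma~\ref{C4bounds} applies to each $C \in \Cc$ and gives $\|F, C\| \le 12$. Summing over the $k - 1$ cycles of $\Cc$ yields
\begin{equation*}
  \|F, \Cc\| \le 12(k-1),
\end{equation*}
contradicting the lower bound above. Hence $R$ is not a path.

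There is no real obstacle here: the lemma follows by juxtaposing the two inequalities already established in this section. The only mild point to check is that the hypotheses of Lemma~\ref{C4bounds} are satisfied, i.e.\ that we really have a path in $R$ of length at least four containing $F$, which is immediate when $R$ itself is such a path.
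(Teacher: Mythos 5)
Your proposal is correct and follows essentially the same route as the paper: the paper also takes the set $F$ from Lemma~\ref{struct1} with $\|F,\Cc\| \ge 12(k-1)+2$ in the path case and contradicts Lemma~\ref{C4bounds}, merely phrasing the final step via pigeonhole (some $C \in \Cc$ with $\|F,C\| \ge 13$) rather than summing the bound $\|F,C\| \le 12$ over all $k-1$ cycles. These are trivially equivalent, and your check that $R$ itself serves as the path $Q$ required by Lemma~\ref{C4bounds} is the right hypothesis to verify.
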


\begin{proof}
Let $F \subseteq V(R)$ be as guaranteed in Lemma \ref{struct1}.  If $R$ is a path, then $\|F,\Cc\| \ge 12(k - 1) + 2$, so that there exists $C \in \Cc$ such that $\|F, C\| \ge 13$, which  violates Lemma~\ref{C4bounds}. So $R$ is not path.
\end{proof}

\begin{lemma}\label{struct2}
  Let $F \subseteq V(R)$  be as guaranteed in Lemma \ref{struct1}.  If $\|F, C\| = 12$ for any $C \in \Cc$, then $G[C] \cong K_{3,3}$.
\end{lemma}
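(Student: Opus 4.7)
The plan is to prove $G[C] \cong K_{3,3}$ by pinning down $|C|$ and then ruling out $|C|=4$.  Since $\|F,C\|=12$ with $|F|=4$, a pigeonhole argument forces at least two vertices of $F$ to satisfy $\|v,C\|\geq 3$ (otherwise $\|F,C\|\leq 4+2\cdot 3=10$).  Lemma~\ref{RCedges} gives $|C|\leq 6$, Lemma~\ref{noC5} rules out $|C|=5$, and if $|C|=6$ then every $v\in F$ has $\|v,C\|\leq 3$, so equality $\|F,C\|=12$ forces $\|v,C\|=3$ for all four, and Lemma~\ref{RCedges} (the six-cycle case) delivers $G[C]\cong K_{3,3}$, as required.

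It therefore remains to exclude $|C|=4$.  Assuming for contradiction $|C|=4$, the strategy is to produce two vertex-disjoint chorded cycles inside $G[V(R)\cup V(C)]$; combined with $\Cc\setminus\{C\}$ these give $k$ disjoint chorded cycles in $G$, contradicting our standing hypothesis.

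If $G[C]$ has exactly one chord, say $c_1c_3$, then every $v\in F$ has $\|v,C\|=3$, and by Lemma~\ref{RCedges} the unique non-neighbor of $v$ in $C$ lies in $\{c_1,c_3\}$.  Partitioning $F=F_1\sqcup F_3$ according to the non-neighbor, when $|F_1|=|F_3|=2$ the two disjoint chorded $4$-cycles $vc_3v'c_2v$ (chord $c_2c_3$) for $v,v'\in F_1$ and $wc_1w'c_4w$ (chord $c_1c_4$) for $w,w'\in F_3$ do the job.  In the unbalanced case, one half (say $F_3$) has at least three vertices, all adjacent to $\{c_1,c_2,c_4\}$, and we construct the second chorded cycle by routing through $c_3$ using edges of $C$ together with an auxiliary adjacency supplied by Lemma~\ref{struct1} (either an $F$-internal edge on the path $Q$ in $R-u$, or an edge from a vertex of $R\setminus F$ to $c_3$ forced by the Ore-degree hypothesis on the non-neighbor $c_3$).

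If $G[C]\cong K_4$, Lemma~\ref{C4bounds} applied to $Q=P$ together with $\|F,C\|=12\leq\|P,C\|\leq 12$ yields $\|P,C\|=12$ and $\|V(P)\setminus F,C\|=0$.  A short distribution argument then forces either an endpoint $p_i$ of $P$ with $\|p_i,C\|=4$, or both internal vertices of $F$ with $\|v,C\|=4$; in either case we form a chorded $4$-cycle $C_1$ using this ``full'' vertex (or pair of vertices) and vertices of $C$, leaving one (resp.\ two) vertices of $C$ in the residual.  Lemma~\ref{struct1} supplies a path $Q^*$ in $R$ that avoids the removed vertex and contains the remaining $F$-vertices, and a carefully chosen leftover $c^*\in C$—combined with its two forced neighbors in $F$ on $Q^*$—furnishes the second chorded cycle $C_2$, whose chord is supplied either by a hop in $R$, by an $F$-internal edge on $Q^*$, or (when two vertices of $C$ remain) by the $K_4$-edge between them.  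The main technical obstacle is precisely this last step: a naive choice of $c^*$ may yield only a chordless triangle, so we must exploit the flexibility in choosing $c^*$ (four options, whose neighbor-pairs in $F$ cover all three two-element subsets of $F-p_i$) together with the path structure from Lemma~\ref{struct1} to guarantee that at least one choice closes up to a chorded cycle.
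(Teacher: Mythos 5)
Your opening reduction (pigeonhole forces two vertices of $F$ with three neighbors in $C$, then Lemma~\ref{RCedges} and Lemma~\ref{noC5} leave only $|C|=4$ to exclude, with the $K_{3,3}$ conclusion for $|C|=6$) matches the paper, and your balanced singly-chorded construction is correct. But both $|C|=4$ branches have genuine gaps. In the singly chorded unbalanced case, the ``auxiliary adjacency'' you invoke is not available: the Ore-degree condition bounds degree sums of nonadjacent pairs and cannot force a specific edge from $R\setminus F$ to $c_3$, and an $F$-internal edge on $Q$ need not exist either. The case split is in fact unnecessary: since by Lemma~\ref{RCedges} the non-neighbor of each $v\in F$ must be an endpoint of the chord, \emph{every} vertex of $F$ is adjacent to both vertices of $C$ not incident to the chord; taking such a vertex $c$, the graph $C-c+p_1$ is a chorded $4$-cycle while $c$ has the three vertices of $F-p_1$ as neighbors on a path avoiding $p_1$ (Lemma~\ref{struct1}\ref{paths}), giving a second disjoint chorded cycle uniformly --- this is exactly the paper's one-line argument.

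The $K_4$ case is where the proposal breaks down. First, your application of Lemma~\ref{C4bounds} to get $\|P,C\|\le 12$ requires an endpoint of $P$ with at least three neighbors in $C$, which you only claim to derive \emph{afterwards} --- the argument is circular. Second, the ``short distribution argument'' is false as stated: $\|F,C\|=12$ is consistent with degree patterns $(3,3,3,3)$ or $(4,3,3,2)$, and neither yields your dichotomy. The paper must first rule out $\|u,C\|=3$ and $\|u,C\|=2$ for $u\in F$ by explicit swap arguments using the paths of Lemma~\ref{struct1}\ref{paths}, concluding $\|u,C\|\in\{0,1,4\}$ and hence the $(4,4,4,0)$ distribution; you skip this entirely. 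Third, and most fundamentally, your announced strategy --- two disjoint chorded cycles inside $G[V(R)\cup V(C)]$ --- provably cannot close the $(4,4,4,0)$ configuration: once one shows $\|R\setminus F, C\|=0$, extracting a chorded $4$-cycle from two full vertices of $F$ and two vertices of $C$ leaves the other two vertices of $C$ with no unused neighbors outside $C$, and $G[R\cup C]$ genuinely need not contain a second disjoint chorded cycle, so no choice of $c^*$ rescues the construction; the ``main technical obstacle'' you flag is an actual counterexample to the method, not a finessable step. The paper instead goes global at this point: it deduces $\|R\setminus F,C\|=0$, invokes Lemma~\ref{R6toC} to force $|R|\le 5$, kills $|R|=4$ by a swap that produces a spanning path violating Lemma~\ref{pchords}, and kills $|R|=5$ either by pushing the degree sum of a second quadruple $F'$ onto some $C'\in\Cc-C$ in violation of Lemma~\ref{C4bounds}, or by a common-neighbor swap. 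None of this machinery appears in your sketch, and without it the lemma is not proved.
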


\begin{proof}
Let $F \subseteq V(R)$ be as guaranteed in Lemma \ref{struct1} and let $C \in \Cc$.  Suppose that $\|F, C\| = 12$.
  By Lemmas~\ref{C4bounds} and \ref{struct1}, this is true for all $C \in \Cc$,
  unless $R \cong K_{2,3}$.
  By Lemmas~\ref{RCedges} and \ref{noC5}, $C \cong K_{3,3}$ unless $|C| = 4$, 
  so assume $|C| = 4$.
  Note that for any $u \in F$ and $c \in C$, if 
  $C - c + u$ is a chorded cycle, then $\|c, F - u\| \le 2$,
  because there exists a path $Q$ in $R$ such that $F - u \subseteq V(Q)$
  and $G[Q + c]$ cannot contain a chorded cycle.

  First assume that $C$ is singly chorded, so we can label $V(C) = \{c_1, c_2, c_3, c_4\}$ such that $c_1c_2c_3c_4$ is a cycle and $c_2c_4$ is the chord.
  By Lemma~\ref{RCedges}, $\|u, C\| = 3$ for every $u \in F$, and $\|c_i, F\| = 4$, for $i \in\{1, 3\}$.  Recall that $p_1,p_2 \in F$ so that $C - c_1 + p_1$ and $P - p_1 + c_1$ both contain chorded cycles, a contradiction.

So for the remainder of the proof, we assume $G[C] \cong K_4$, with $V(C) = \{c_1,c_2,c_3,c_4\}$.  Fix $u \in F$, and by Lemma \ref{struct1}, let $Q$ be a path in $R - u$ such that $F - u \subseteq V(Q)$.  Suppose $\|u,C\| = 3$, so
$\|F - u, C\| = 9$, and there exists $c \in C$ such that $c$ is adjacent to all three vertices in $F - u$.  This implies $Q + c$ and $C - c + u$ both contain chorded cycles, a contradiction. 

 Now suppose $\|u,C\| =2$ and $N_C(u) = \{c_1,c_2\}$.  Then $\|F - u, C\| = 10$, and there exist two vertices in $C$ adjacent to all three vertices in $F - u$.  If $c'$ is one of these two vertices and $c' \notin \{c_1,c_2\}$, then $Q + c'$ and $C - c' + u$ both contain chorded cycles, a contradiction.  Therefore, every vertex in $F$ is adjacent to both $c_1$ and $c_2$.  Since $\|F,C\| = 12$ and $\|u,C\| = 2$, there exists $v \in F - u$ such that $\|v,C\| = 4$.  By Lemma \ref{struct1}, there exists a path $Q'$ in $R - v$ such that $F - v \subseteq V(Q')$, so that $C - c_1 + v$ and $Q' + c_1$ both contain chorded cycles, a contradiction.

  So $\|u, C\| \in \{0, 1, 4\}$, for every $u \in F$.  Since $\|F,C\| = 12$, 
  there exists $u' \in F$ such that $\|u', C\| = 0$
  and $\|u, C\| = 4$ for every $u \in F - u'$.  By Lemma \ref{struct1}, $p_1,p_2 \in F$, so we may assume $\|p_1, C\| = 4$.  Thus, for all $c \in C$, $C - c + p_1$ is a chorded cycle, and further $\|c, P - p_1\| \le 2$, else $P - p_1 + c$ contains a chorded cycle.
  Therefore, if $\|R \setminus F, C\| > 0$,
  we can pick $c$ such that $\|c, P - p_1\| \ge 3$ so that
  $P - p_1 + c$ has a chorded cycle, a contradiction.

  Thus $\|R \setminus F, C\| = 0$.  By Lemma \ref{R6toC}, $|R| \le 5$, as otherwise we can find $F^+\subseteq V(R)$ with $|F^+| = 6$ so that for distinct $v, v' \in F^+\setminus F$, $\|\{v,v'\}, C\| \ge 1$, a contradiction.
  If $|R| = 4$, then $u'$ has a neighbor $v \in F - u'$.
  Since $R$ is not a path, by Lemma~\ref{pchords} $R \cong C_4$, so
  replacing $C$ with $C' := C - c + v$ in $\Cc$ gives a collection of $k-1$ chorded cycles
  that satisfies (O1) - (O3), but $R' := R - v + c$ has a path $P'$ such that $|P'| = |R'|$ and such that
  $u'$ is an endpoint and such that $\|u', R'\| = 1$.  
  This is a contradiction to Lemma~\ref{pchords}.

  So assume $|R| = 5$ so that $P = p_1q_1rq_2p_2$.  By Lemma~\ref{pchords}, either $p_1r,p_2r \in E(G)$, or  $R \in \{C_5, K_{2,3}\}$.   In each of these cases, we can assume that $F = \{p_1, q_1, q_2, p_2\}$, by the proof Lemma~\ref{struct1}.  Recall that $\|p_1,C\| = 4$ and $\|u', C\| = 0$ for some $u' \in F$.  Furthermore, since $\|R\setminus F, C\| = 0$, $\|r,C\| = 0$.

Suppose $R \in \{C_5, K_{2,3}\}$.  
 Let $F' := \{q_1, r, q_{2}, p_{2}\}$, so that $u' \in F'$,  $\|F', C\| \le 8$ and $\|F', R\| \le 10$.  
  Since $q_1q_2$, $rp_{2} \notin E(G)$,  $\|F', \Cc - C\| \ge 12(k - 2) + 2$ so that $k \ge 3$ and $\|F', C'\| \ge 13$ for some $C' \in \Cc- C$, a contradiction to Lemma \ref{C4bounds}.

  Thus $p_1r, p_2r \in E(G)$.
  Since three of the five vertices in $R$ send four edges to $C$,
  there exists $i \in [2]$, such that at least two vertices in $\{r,q_i,p_i\}$ have four neighbors in $C$, and so have a common neighbor $c \in C$.  This implies that $G[\{r,q_i,p_i,c\}]$ contains a chorded cycle.
  Furthermore, there exists $v \in \{p_{3-i}, q_{3-i}\}$ such that $v$ has four neighbors in $C$, and so $C - c + v$ contains a chorded cycle, a contradiction.

  Thus, $|C| \neq 4$ and  $G[C] \cong K_{3,3}$, as desired.
\end{proof}

\begin{lemma}\label{structure}
If $R \not\cong K_{2,3}$, then $G[C] \cong K_{3,3}$ for all $C \in \Cc$.  If $R \cong K_{2,3}$, then $G[C] \cong K_{3,3}$ for all but at most one $C \in \Cc$, and for any such $C$, $G[C] \cong K_{1,1,2}$ and $G[V(R) \cup V(C)] \cong K_{1,4,4}$.
\end{lemma}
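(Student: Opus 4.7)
The plan is to combine Lemmas~\ref{struct1}, \ref{C4bounds}, and \ref{struct2}. Lemma~\ref{struct1} furnishes a set $F \subseteq V(R)$ with $|F|=4$, $p_1,p_2 \in F$, and lower bounds on $\|F,\Cc\|$; Lemma~\ref{C4bounds} gives the complementary per-cycle upper bound $\|F,C\|\le 12$. In the case $R \not\cong K_{2,3}$, since $R$ has already been shown not to be a path, Lemma~\ref{struct1}(a) gives $\|F,\Cc\| \ge 12(k-1)$, which combined with the per-cycle bound forces $\|F,C\| = 12$ for every $C \in \Cc$; Lemma~\ref{struct2} then immediately yields $G[C] \cong K_{3,3}$ for every $C$.

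For the remaining case $R \cong K_{2,3}$, Lemma~\ref{struct1}(a) gives only $\|F,\Cc\| \ge 12(k-1) - 2$, so the total deficit $\Delta := \sum_{C \in \Cc}(12 - \|F,C\|)$ satisfies $\Delta \le 2$. Any $C$ with $\|F,C\|=12$ is $K_{3,3}$ by Lemma~\ref{struct2}, so it remains to study cycles with $\|F,C\| \in \{10, 11\}$. Using Lemmas~\ref{RCedges} and \ref{noC5}, one checks that $|C| \ge 7$ forces $\|u,C\| \le 2$ for each $u \in F$ (so $\|F,C\| \le 8$) and $|C| = 5$ caps $\|F,C\|$ at $9$; hence any such $C$ has $|C| \in \{4, 6\}$.

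I would next show that $\|F,C\| = 11$ forces $G[C] \cong K_{3,3}$. For $|C|=6$, the only distribution of $(\|u,C\|)_{u \in F}$ summing to $11$ with entries bounded by $3$ is $(3,3,3,2)$ up to permutation, and Lemma~\ref{RCedges}(\ref{3edgesL6}) applied to the three vertices with $\|u,C\|=3$ forces $G[C] \cong K_{3,3}$ with a common bipartition. For $|C|=4$, I would adapt the argument of Lemma~\ref{struct2} to its one-edge-of-slack analog, using (O1)--(O3) together with Lemma~\ref{struct1}(b)'s spanning path in $R-u$ to derive contradictions via the swaps $(C-c+u,\, Q+c)$. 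This handles all cycles of deficit $0$ or $1$, leaving at most one exceptional cycle $C^*$ with $\|F,C^*\|=10$.

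For this exceptional $C^*$, I would first rule out $|C^*|=6$ by a similar structural argument (forcing $K_{3,3}$ and hence deficit zero), so $|C^*|=4$. To then obtain $G[C^*] \cong K_{1,1,2}$ and $G[V(R)\cup V(C^*)] \cong K_{1,4,4}$, I would combine (O1)--(O3), the edge-maximality of $G$, and the Ore-degree condition $\sigma_2(G) \ge 6k-2$ applied to non-edges within $V(R) \cup V(C^*)$ to pin down the adjacencies. Writing $V(C^*) = \{c_1,c_2,c_3,c_4\}$ with $c_3c_4$ the unique non-edge of $C^*$, the universal vertex of the desired $K_{1,4,4}$ must lie in $\{c_1,c_2\}$ (no vertex of $K_{2,3}$ has $R$-degree $4$), and one deduces the tripartition $\{c_1\} \cup \{p_1, r, p_2, c_2\} \cup \{q_1, q_2, c_3, c_4\}$. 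The chief obstacle is precisely this exceptional-cycle analysis: with two edges of slack, several configurations for $G[C^*]$ and its attachments to $V(R)$ (especially involving $r \notin F$) are a priori possible, and ruling them all out in favor of the $K_{1,4,4}$ tripartition is the most intricate step of the proof.
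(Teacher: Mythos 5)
Your treatment of the case $R \not\cong K_{2,3}$ is exactly the paper's argument: Lemma~\ref{struct1}\ref{Fdeg} together with the per-cycle bound of Lemma~\ref{C4bounds} forces $\|F,C\| = 12$ for every $C \in \Cc$, and Lemma~\ref{struct2} finishes. Your bookkeeping skeleton for $R \cong K_{2,3}$ is also sound as far as it goes: the total deficit $\sum_{C}(12 - \|F,C\|)$ is at most $2$, deficit-zero cycles are handled by Lemma~\ref{struct2}, deficient cycles have $|C| \in \{4,6\}$ by Lemmas~\ref{RCedges} and \ref{noC5}, and a deficient $6$-cycle still contains a vertex of $F$ with three neighbors in $C$, so it is $K_{3,3}$ by Lemma~\ref{RCedges}.

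The genuine gap is exactly where you flag it: the deficient $4$-cycles, and it is more than an intricacy. First, the ``one-edge-of-slack adaptation'' of Lemma~\ref{struct2} does not go through, because that proof uses $\|F,C\| = 12$ exactly: for instance, $\|u,C\| = 3$ is refuted there because $\|F - u, C\| = 9$ forces some $c \in C$ adjacent to all of $F - u$, whereas at $\|F,C\| = 11$ one only gets $\|F - u, C\| = 8$, which can split $(2,2,2,2)$ over the four columns and forces nothing. Concretely, the configuration $G[C] \cong K_4$ with $u_1 \in F$ complete to $C$, with $u_2, u_3$ adjacent to the triangle $c_1c_2c_3$, with $u_4$ adjacent only to $c_4$, and with the fifth vertex $r$ of $R$ having no neighbors in $C$, survives every exchange of the form $(C - c + u,\ Q + c)$: for each pair $(u,c)$ either $\|u, C - c\| \le 1$ or $\|c, F - u\| \le 2$, and Lemma~\ref{C4bounds} is not violated. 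The root cause is that your ledger never sees $r$. The paper's proof switches frames at precisely this point: since \emph{both} parts of $K_{2,3}$ are independent, the Ore condition gives $\|A,\Cc\| \ge 9(k-1)$ and $\|B,\Cc\| \ge 6k - 8$, hence $13 \le \|R,C\| \le 15$ for every $C$; this first kills $G[C] \cong K_4$ (a vertex $p_i$ with three neighbors in a $K_4$ is an endpoint of a spanning path, so Lemma~\ref{C4bounds} would force $\|R,C\| \le 12$; then $\|B,C\| \ge 7$ produces two disjoint chorded cycles), and then yields the decisive handle $\|u,C\| = 3$ for \emph{every} $u \in A$, including $r$, and every $C \in \Cc$. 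In the singly chorded case this pins $N_C(u) \supseteq \{c_1,c_3\}$ for all $u \in A$ via Lemma~\ref{RCedges}, whence exchanges give $N_R(c_1) = N_R(c_3) = A$, $N_C(q_j) = \{c_2,c_4\}$, and $G[V(R) \cup V(C)] \cong K_{1,4,4}$ with $\|R,C\| = 13$; the remaining slack then forces $\|R,C''\| = 15$ for all other $C''$, to which Lemma~\ref{struct2} applies. Your closing plan (Ore applied to non-edges inside $V(R) \cup V(C^*)$) would in effect have to reconstruct this whole-of-$R$ counting; as written, the exceptional-cycle step is missing, not merely unpolished.
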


\begin{proof}
Let $F \subseteq V(R)$ be as guaranteed by Lemma \ref{struct1}.  If $R$ is not isomorphic to $K_{2,3}$, then $\|F,\Cc\| \ge 12(k - 1)$.  By Lemma \ref{C4bounds}, $\|F, C\| \le 12$ for all $C \in \Cc$ so that in fact, equality holds for all $C \in \Cc$.  Thus, by Lemma \ref{struct2}, $G[C] \cong K_{3,3}$ for all $C \in \Cc$.

So assume $R \cong K_{2,3}$ with partite sets $A = \{p_1,p_2,p_3\}$ and $B = \{q_1,q_2\}$ with $|A| = 3$ and $|B| = 2$.
  Since $A$ and $B$ are independent, we have $\|B, \Cc\| \ge 6k - 8$ and 
  \begin{equation*}
    2\|A, \Cc\| = \sum_{a \in A} 2\|a, \Cc\| \ge 3(6k - 2) - 12= 18k - 18,
  \end{equation*}
so $\|A, \Cc\| \ge 9(k - 1)$
  and $\|R, \Cc\| \ge 15k - 17 = 15(k - 1) - 2$.  If $\|R, C\| \ge 16$ for some $C \in \Cc$, then there exists some $u \in R$ such that $\|u,C\| = 4$.  By Lemma \ref{C4bounds}, $\|R - u, C\| \le 12$ so that there exists $u' \in R - u$ such that $\|u', C \| \le 3$.  However, $\|R - u', C \|\ge 13$, a contradiction to Lemma \ref{C4bounds}.

We therefore have that, for ever $C \in \Cc$,
 $13 \le \|R, C\| \le 15$.  Fix $C \in \Cc$.  At least two vertices in $R$ have three neighbors each in $C$ so that by Lemmas \ref{RCedges} and \ref{noC5}, $|C| =4$ or $G[C] \cong K_{3,3}$.  We claim that $G[C] \not\cong K_4$.

Suppose on the contrary, $G[C] \cong K_4$.  If $\|p_i, C\| \ge 3$ for some $i \in [3]$, Lemma \ref{C4bounds} implies that $\|R,C\| \le 12$, a contradiction.  So $\|p_i,C\| \le 2$ for all $i \in [3]$.  Hence $\|B,C\| \ge 7$ so that for all $c \in C$ and $j \in [2]$, $C - c + q_j$ is a chorded cycle.  As $\|R,C\| \ge 13$, there exists $c \in C$ such that $\|c,R\| \ge 4$.  Without loss of generality, $N_R(c) \supseteq \{p_1,p_2,q_1\}$.  However, $C - c + q_2$ and $p_1cp_2q_1p_1$ each contain chorded cycles, a contradiction.


So for all $C \in \Cc$, either $|C| = 4$ and $C$ is singly chorded or $G[C] \cong K_{3,3}$.  By Lemma \ref{RCedges}, $\|u, C\| \le 3$ for all $u \in A$ and $C \in \Cc$.  Since $\|A, \Cc\| \ge 9(k-1)$, we deduce that $\|A, C\| = 9$ and so $\|u, C\| = 3$ for all $u \in A$ and $C \in \Cc$.

Suppose $|C| = 4$ and $C$  is singly chorded.  We can label $V(C) = \{c_1, c_2, c_3, c_4\}$ such that $c_1c_2c_3c_4$ is a cycle and $c_2c_4$ is the chord.  By Lemma \ref{RCedges}, $uc_1,uc_3 \in E(G)$ for all $u \in A$.   Since,  $C - c_i + u$ is a chorded cycle for $i \in \{1, 3\}$, $R - u + c_i$ cannot contain a chorded cycle, which implies that $N_R(c_i) = A$.  Hence, for every $v \in B$, $N_C(v) \subseteq \{c_2, c_4\}$, and since $\|R, C\| \ge 13$, equality holds and $N_C(v)  = \{c_2, c_4\}$ for every $v \in B$.

Fix $u \in A$.  Without loss of generality, assume $N_C(u) = \{c_1,c_3,c_4\}$.  Then $C - c_2 + u$ is a chorded cycle.  If $u' \in A - u$ has $c_2 \in N_C(u)$, then $R - u + c_2$ contains a chorded cycle, a contradiction.  Thus, for all $w \in A$, $N_C(w) = \{c_1,c_3,c_4\}$  so that $N_R(c_4) = V(R)$ and $G[R \cup C] \cong K_{4,4,1}$.

Recall that $\|R, \Cc\| \ge 15(k - 1) -2$  and $\|R, C'\| \le 15$ for all $C' \in \Cc$.  Further, $\|u, C'\| \le 3$ for all $u \in R$ and $C' \in \Cc$.  Since $\|R, C\| = 13$, $\|R, C''\| = 15$ for every $C'' \in \Cc - C$.  However, for any $u \in A$, $\|u, C'\| \le 3$ so that $F:= R - u$ satisfies $\|F, C''\| \ge 12$.  Furthermore, $F$ satisfies all the hypotheses of Lemmas \ref{struct1}  and \ref{struct2}, so that $G[C''] \cong K_{3,3}$ for all $C'' \in \Cc - C$.

This completes the proof of the lemma.
\end{proof}

\begin{lemma}\label{pqchords}
  For every $u \in R$ and $C \in \Cc$, $\|u, C\| \le 3$.
  If $P'$ is path that spans $R$, $p$ is an endpoint of $P'$ and $q$
  is adjacent to $p$ on $P'$, then
  $d_G(p) = 3k-1$ and $d_G(q) \ge 3k - 1$.
  In particular, for every $C \in \Cc$ $\|p, C\| = 3$ and $\|q, C\| \ge 2$.
\end{lemma}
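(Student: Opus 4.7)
I would establish the three conclusions in order. The bound $\|u,C\|\le 3$ follows from Lemma~\ref{structure}: when $G[C]\cong K_{3,3}$ we have $|C|=6$, so Lemma~\ref{RCedges} forbids $\|u,C\|\ge 4$ (which would force $|C|=4$); and in the exceptional case ($R\cong K_{2,3}$ with some $G[C]\cong K_{1,1,2}$), Lemma~\ref{structure} gives $G[V(R)\cup V(C)]\cong K_{1,4,4}$ and inspection of the tripartite parts shows $\|u,C\|\in\{2,3\}$ for every $u\in V(R)$. For $d_G(p)=3k-1$: the upper bound comes from $d_R(p)=2$ (Lemma~\ref{pchords}, since $R$ is not a path) and $\|p,\Cc\|\le 3(k-1)$. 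For the lower bound, let $p'$ be the other endpoint of $P'$; then $p'\in\Pp$ and $d_G(p')\le 3k-1$ by the same reasoning. If $pp'\notin E(G)$, applying Ore to $p,p'$ finishes. Otherwise $P'+pp'$ is a Hamilton cycle in $R$, which must be chordless, so $R\cong C_{|R|}$; then every vertex of $R$ lies in $\Pp$, and since $|R|\ge 4$ and $p$ has only two neighbors in $R$, any non-neighbor $u$ of $p$ in $R$ satisfies $d_G(u)\le 3k-1$, and Ore completes the argument.

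\textbf{The delicate statement is $d_G(q)\ge 3k-1$.} Writing $P'=v_1\cdots v_n$ with $p=v_1$, $q=v_2$, $p'=v_n$, I seek a non-neighbor $u$ of $q$ with $d_G(u)\le 3k-1$ and again apply Ore. If $qp'\notin E$, take $u=p'$. Otherwise $D:=qv_3\cdots v_nq$ is a cycle in $R$ which must be chordless, so $qv_i\notin E$ for $4\le i\le n-1$ and the only $R$-edges among $\{v_3,\ldots,v_n\}$ are those of $D$. Also $pp'\notin E$ (else the Hamilton cycle $v_1\cdots v_nv_1$ has chord $qv_n$). Letting $v_m$ denote the second $R$-neighbor of $p$, we have $m\in\{4,\ldots,n-1\}$: $m=n$ is excluded by $pp'\notin E$, and $m=3$ by noting that the spanning path $v_nv_{n-1}\cdots v_3v_1v_2$ would place $q\in\Pp$ despite $d_R(q)\ge 3$, contradicting Lemma~\ref{pchords}. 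For any $i\in\{4,\ldots,n-1\}\setminus\{m\}$, we get $d_R(v_i)=2$ and $qv_i\notin E$, so $u=v_i$ works. This set is empty only when $n=5$ and $m=4$, and then the six forced $R$-edges on the five vertices $v_1,\ldots,v_5$ produce exactly a $K_{2,3}$.

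\textbf{Handling $R\cong K_{2,3}$.} Here the unique non-neighbor of $q$ in $R$ is the other vertex $b^*\in B_R$, so the goal becomes $d_G(b^*)\le 3k-1$. By Lemma~\ref{structure}, either some $C^*\in\Cc$ satisfies $G[C^*]\cong K_{1,1,2}$, in which case the $K_{1,4,4}$ structure forces $\|b^*,C^*\|=2$ and hence $d_G(b^*)\le 3+2+3(k-2)=3k-1$; or every $C\in\Cc$ is $K_{3,3}$, and Lemma~\ref{uvnhd} applied across the edges of $R$ forces $G[V(R)\cup V(C)]$ to be bipartite with parts of sizes $5$ and $6$ for every $C$. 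In the second subcase I would invoke the edge-maximality of $G$ at the non-edge $qb^*$: any chorded cycle in $G+qb^*$ through this edge, together with the accompanying $k-1$ disjoint chorded cycles in $G$ (whose existence is guaranteed by edge-maximality), can be compared against $\Cc$ using (O1)--(O3) to force $\|b^*,C\|\le 2$ for some $C\in\Cc$, giving the required bound. Finally, the ``in particular'' conclusions follow arithmetically: $\|p,C\|=3$ for every $C$ because $\sum_C\|p,C\|=3(k-1)$ with each term at most $3$; and $\|q,C\|\ge 2$ for every $C$ because $\|q,C_0\|\le 1$ for some $C_0$ would give $\sum_C\|q,C\|\le 1+3(k-2)=3k-5$, contradicting $d_G(q)-d_R(q)\ge 3k-4$ (using $d_R(q)\le 3$).

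\textbf{Main obstacle.} The all-$K_{3,3}$ subcase of $R\cong K_{2,3}$ is the sharpest point of the proof: Ore alone gives only $d_G(q)+d_G(b^*)\ge 6k-2$, which does not force $d_G(q)\ge 3k-1$ without an extra upper bound on $d_G(b^*)$. Extracting this bound appears to require leveraging the rigid bipartite $K_{5,6}$-structure together with the (O1)--(O3) optimality of $\Cc$ and the edge-maximality of $G$; this is the technical core where the argument moves beyond pure Ore-type counting.
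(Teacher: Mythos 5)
Your handling of the first assertion, of $d_G(p)=3k-1$, of the reduction to $R\cong K_{2,3}$, and of the $K_{1,1,2}$ subcase is correct and essentially parallels the paper: the paper organizes the reduction by assuming $d_G(q)\le 3k-2$ and deriving in turn $qp'\in E(G)$, $qq'\notin E(G)$, $\|q',C\|=3$ and $d_R(q')=3$ for all $C$, and finally $R\cong K_{2,3}$ with parts $\{q,q'\}$ and $\{p,r',p'\}$, while you hunt directly for a degree-$(3k-1)$ non-neighbor of $q$ among the internal vertices of the chordless cycle $qP'p'q$; this is the same Ore counting organized differently, and it works (including your exclusion of $m=3$ via $q\in\Pp$ against Lemma~\ref{pchords}; note only that for $|R|=4$ there is no admissible $m$ at all, which silently shows $qp'\notin E(G)$ in that case).

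However, the subcase you yourself flag as the main obstacle --- $R\cong K_{2,3}$ with $G[C]\cong K_{3,3}$ for every $C\in\Cc$ --- is a genuine gap, and the mechanism you sketch would not close it. Edge-maximality at the non-edge $qb^*$ yields no information: $R+qb^*$ already contains a chorded cycle (the $4$-cycle $qpb^*r'q$ with chord $qb^*$), so $G+qb^*$ contains $k$ disjoint chorded cycles using $\Cc$ itself, and no comparison against (O1)--(O3) is triggered. Worse, your target $\|b^*,C\|\le 2$ for some $C$ cannot be extracted from the counting and structure lemmas you cite: in the hypothetical configuration the paper shows one is driven to $\|q',C\|=3$ for \emph{every} $C$ (with $q'=b^*$), i.e.\ $d_G(b^*)=3k$, so $q$ has no low-degree Ore partner anywhere in $R\cup V(\Cc)$ as the configuration stands. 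The paper's actual move is an exchange argument: assuming $d_G(q)\le 3k-2$, since $d_R(q)=3$ one has $\|q,\Cc\|\le 3k-5$, so some $C\cong K_{3,3}$ satisfies $\|q,C\|\le 2$; as $N_C(q)\subseteq B$ by Lemma~\ref{uvnhd}, there is $b\in B\setminus N_C(q)$, and one replaces $C$ by $C-b+p'$ and $P'$ by $bq'P'p$, obtaining a new collection and spanning path still satisfying (O1)--(O3) in which $b$ is an endpoint. The already-proved first assertion of the lemma then gives $d_G(b)=3k-1$, and Ore applied to the non-edge $qb$ contradicts $d_G(q)\le 3k-2$. The missing idea is precisely this vertex swap, which \emph{manufactures} a low-degree non-neighbor of $q$ rather than trying to bound $d_G(b^*)$; without it (or something equivalent), your plan stalls exactly where you said it would.
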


\begin{proof}
  Let $p$ and $p'$ be the two endpoints of $P'$, and let $q$ and $q'$ be the neighbors of $p$ and $p'$, respectively, on $P'$.  By Lemmas~\ref{RCedges} and \ref{structure}, $\|u, C\| \le 3$ for all $u \in R$ and $C \in \Cc$.  Therefore, if $d_R(u) = 2$, then $d_G(u) \le 3k - 1$, so in particular, $d_G(p) \le 3k - 1$ and $d_G(p') \le 3k - 1$.
  If $pp' \notin E$, then $d_G(p') = d_G(p) = 3k - 1$.
  Otherwise, $pp' \in E$ and $p$ is not adjacent to $q'$.  In this case, $d_R(q') = 2$ so that $d_G(p) = 3k - 1$.  Since $\|u, C\| \le 3$ for all $u \in R$ and $C \in \Cc$, it follows that $\|p, C\| = 3$.  By symmetry, this holds for $p'$ as well.

Since $\|q, R\| \le 3$, if we can show that $d_G(q) \ge 3k - 1$, it follows that $\|q, C\| \ge 2$ for all $C \in \Cc$.  So assume $d_G(q) \le 3k - 2$.   Now, $qp' \in E(G)$, as otherwise $d_G(q) \ge 3k - 1$.  If $|R| = 4$, then by Lemma \ref{pchords}, $R$ contains a chorded cycle.  So $|R| > 4$, and as a result $qq' \notin E(G)$.  Since $d_G(q) \le 3k - 2$, we get $d_G(q') \ge 3k$, and furthermore, since $d_R(q') \le 3$ and $\|q',C\| \le 3$ for all $C \in \Cc$, we deduce that $\|q',C\| = 3$ and $d_R(q') = 3$.  This implies $pq' \in E(G)$, as otherwise we get a chorded cycle in $R$.  Furthermore, $d_G(q) = 3k - 2$ and $\|q,R\| \le 3$ so that $\|q,C\| \ge 1$ for all $C \in \Cc$.

Since $|R| \ge 5$, there exists $r' \notin \{p,p'\}$ a neighbor of $q'$ on $P'$.  Note that $r' \in \Pp$ so that by the above, $d_G(r') = 3k - 1$ and $\|r', C\| = 3$ for all $C \in \Cc$.  If $|R| \ge 6$, then  $r'q \notin E(G)$ and $d_G(q) \ge 3k - 1$, a contradiction.  Hence, $|R| = 5$, and, furthermore, $R \cong K_{2,3}$
with partite sets $\{q, q'\}$ and $\{p, p', r'\}$.  Observe that for all $u \in \{p,r',q',p'\}$ and $C \in \Cc$,  $\|u, C\| = 3$.

If know fix $C \in \Cc$, such that $\|q, C\| \le 2$, which must
exist because $d(q) = 3k - 2$ and $d_R(q) = 3$.
By Lemma \ref{structure}, $G[C] \in \{K_{3,3}, K_{1,1,2}\}$.  Furthermore, if $G[C] \cong K_{1,1,2}$, then $G[C \cup R] = K_{1,4,4}$, but this contradicts
the fact that $\|q', C \cup R\| = 6$.
Hence, $C \cong K_{3,3}$ and let $A $ and $B$ denote its partite sets.  By Lemmas \ref{RCedges} and \ref{uvnhd}, we may assume $N_C(p) = N_C(r') = N_C(p') = A$, $N_C(q') = B$, and $N_C(q) \subseteq B$.  Since $\|q, C\| \le 2$, there exists $b \in B\setminus N_C(q)$.  We can replace $C$ with $C - b + p'$ and replace $P'$ with $bq'P'p$.  Our new collection and path satisfy (O1)-(O3).  However, $b$ is an endpoint of our new path and by the above, $d_G(b) = 3k - 1$.  Since $bq \notin E(G)$, $d_G(q) \ge 3k - 1$, a contradiction.
\end{proof}

\begin{lemma}\label{K32_K22}
  $R$ is either isomorphic to $K_{2,3}$ or $K_{2,2}$.
\end{lemma}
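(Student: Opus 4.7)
Assume for contradiction that $R \not\cong K_{2,3}$ and $R \not\cong K_{2,2}$. By Lemma~\ref{structure} the hypothesis $R \not\cong K_{2,3}$ gives $G[C] \cong K_{3,3}$ for every $C \in \Cc$; write $A_C$ and $B_C$ for its partite sets. Combining Lemma~\ref{pqchords} with Lemma~\ref{RCedges}, each $p \in \Pp$ satisfies $\|p,C\|=3$ and $N_C(p)\in\{A_C,B_C\}$. By Lemma~\ref{uvnhd} any two adjacent vertices of $R$ with $\|\cdot,C\|=3$ get opposite partite sets, so the subgraph of $R$ induced on $\Pp$ is bipartite, two-coloured by the choice of partite set. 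The plan is to use this rigid two-colouring, together with the path-rearrangement that produces many members of $\Pp$, to either force $R$ to be one of the two target graphs or to exhibit two disjoint chorded cycles in $G[V(R)\cup V(C)]$ for some $C$, contradicting the optimal choice of $\Cc$.

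For the small cases $|R| \in \{4,5\}$, recall that by Lemma~\ref{pchords} (and since $R$ is not a path) every $p \in \Pp$ has $d_R(p)=2$. For $|R|=4$ the only chordless, connected, non-path graphs on four vertices are $K_{2,2}$ and the paw; in the paw the pendant is the endpoint of a spanning path but has $d_R=1$, ruled out, so $R\cong K_{2,2}$. For $|R|=5$ the analogous enumeration kills the bull, the tadpole, and the $C_4$-plus-pendant by the same $d_R(p)=2$ argument, leaving $C_5$, the bowtie, and $K_{2,3}$. In $C_5$ every vertex lies in $\Pp$, so the two-colouring of Step~1 would bipartition an odd cycle, a contradiction. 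In the bowtie the centre $r$ is adjacent in $R$ to all four members of $\Pp$, so $N_C(r)$ must be disjoint from both $A_C$ and $B_C$, hence $N_C(r)=\emptyset$ for every $C$; then (in either parity case for $N_C(p_1),N_C(p_2)$) I would produce a chorded $5$-cycle such as $p_1 a_1 p_2 q_2 r p_1$ with chord $p_2 r$ (using one vertex $a_1$ of $A_C$) together with a chorded $6$-cycle on the $K_{3,3}$ induced by the remaining $R$-vertex $q_1$ and the five leftover vertices of $C$, giving two disjoint chorded cycles in $G[V(R)\cup V(C)]$ and the desired contradiction.

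The remaining step, which I expect to be the main obstacle, is ruling out $|R|\ge 6$. If $p_1p_2\in E(R)$ then chordlessness forces $R\cong C_n$ with $n=|R|\ge 6$, every vertex is in $\Pp$, and the two-colouring forces $n$ even; then the chorded $6$-cycle $u_1 a_1 u_3 a_2 u_5 a_3 u_1$ together with $u_2 b_1 u_4 b_2 u_6 b_3 u_2$ (using that the odd-indexed vertices all have $N_C=A_C$ and the even-indexed ones all have $N_C=B_C$) are disjoint chorded cycles in $G[V(R)\cup V(C)]$, contradicting the extremality of $\Cc$. If $p_1p_2\notin E(R)$, the second $R$-neighbour $w_1$ of $p_1$ gives an internal vertex $t_1\in\Pp$ by path rearrangement, and similarly for $p_2$; combining this with the set $F^+$ produced by Lemma~\ref{R6toC} (which has $\|F^+,C\|\ge 15$, forcing several $F^+$-vertices to realise $N_C\in\{A_C,B_C\}$) and with the alternation constraint, the goal is to build inside $G[V(R)\cup V(C)]$ a small chorded cycle using one partite set of $C$ together with an $R$-structure, and a chorded $6$-cycle on the complementary side of $C$ together with the remaining $R$-vertices forming a $K_{3,3}$-like graph, again contradicting optimality. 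The difficulty here is controlling the exact $R$-structure (the set of hops allowed by Lemma~\ref{hops}) well enough that such a clean partition exists, and it is here that I expect the proof to require the most care.
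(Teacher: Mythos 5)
Your small cases and your Hamiltonian case are sound and essentially match the paper: for $|R|=4$ the paper also invokes Lemma~\ref{pchords} to get $R \cong K_{2,2}$, and for Hamiltonian $R$ it runs exactly your parity argument (every vertex is in $\Pp$, so Lemmas~\ref{pqchords}, \ref{RCedges} and \ref{uvnhd} two-colour the spanning cycle by $\{A_C,B_C\}$, forcing $|R|$ even and $|R|\ge 6$), and then exhibits two disjoint chorded cycles splitting $C$ between consecutive path vertices much as you do with $u_1a_1u_3a_2u_5a_3u_1$ and $u_2b_1u_4b_2u_6b_3u_2$. Your $|R|=5$ enumeration is a legitimate alternative to the paper's treatment, though in the bowtie you only verify the parity case $N_C(p_1)=N_C(p_2)$; the mixed case needs a different pair of cycles (one exists, e.g.\ a chorded $6$-cycle through $p_1$ and five vertices of $C$ using the chord $p_1a_2$, together with the $5$-cycle $q_1b_3p_2q_2rq_1$ with chord $rp_2$), so that subcase is fixable but not yet proved as written.

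The genuine gap is the case you yourself flag: $|R|\ge 6$ with $R$ not Hamiltonian, which is the heart of the lemma, and for which you state a goal rather than an argument. The paper closes it not with the counting apparatus of Lemma~\ref{R6toC} (which is the wrong instrument here: after Lemma~\ref{structure} every $C$ is $K_{3,3}$ and the leverage is the adjacency constraint of Lemma~\ref{uvnhd}, not edge counts) but with a short hop analysis. Since $d_R(p)=2$ by Lemma~\ref{pchords}, the endpoint $p$ has exactly one hop $pw$, and $w\neq p'$ as $R$ is not Hamiltonian. If $w \neq q'$, then $N_C(p)\cap N_C(q)=\emptyset$ by Lemmas~\ref{uvnhd} and \ref{pqchords}, so there is an edge $cc'\in E(C)$ with $pc,c'q\in E(G)$, and $pcc'qPwp$ is a cycle with chord $pq$; meanwhile $p'$ retains at least two and $q'$ at least one neighbour in $C-c-c'$, so $C-c-c'+p'+q'$ contains a chorded cycle, giving two disjoint chorded cycles in $G[V(R)\cup V(C)]$, a contradiction. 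Hence $pq'$ and (symmetrically) $qp'$ are edges, and with $|R|\ge 6$ the neighbours $r\neq p$ of $q$ and $r'\neq p'$ of $q'$ on $P$ lie in $\Pp$; one may then take $N_C(p)=N_C(r)=N_C(r')=N_C(p')=A$ and $N_C(q)\cup N_C(q')\subseteq B$, whereupon $pa_1b_2a_2b_1qp$ (chord $pa_2$) and $rPp'a_3r$ (chord $a_3r'$) are again two disjoint chorded cycles. Without some argument of this kind your proposal does not establish the lemma, since your sketch for this case never controls the hop structure you correctly identify as the obstacle.
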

\begin{proof}
  If $|R| = 4$, then Lemmas~\ref{pchords} implies that $R \cong K_{2,2}$,
  so assume $|R| \ge 5$ and $R$ is not isomorphic to $K_{2,3}$.
  Let $P = u_1, \dotsc, u_{|R|}$, $p := u_1$, $q := u_2$,
  $q' := u_{|R| - 1}$ and $p' := u_{|R|}$.
  Let $C \in \Cc$.  By Lemma~\ref{structure}, $G[C] \cong K_{3,3}$, so we let $A = \{a_1,a_2,a_3\}$ and $B = \{b_1,b_2,b_3\}$ be its partite sets.
  Recall that by Lemma~\ref{RCedges}, if $\|u,C\| = 3$ for any $u \in R$, 
then $N_C(u) \in \{A, B\}$.

  First assume that $R$ is Hamiltonian (that is, $R$ contains a cycle of size $|R|$).  Since every vertex in $R$ is the endpoint of a path spanning $R$, 
  by Lemma~\ref{pqchords}, $\|u, C\| = 3$ for every $C \in \Cc$ and $u \in R$.
  By Lemma~\ref{uvnhd}, we can assume that
  $N_C(u_i)  = A$ if $i$ is odd and $N(u_{i}) = B$ is $i$ is even.
  Therefore, Lemma~\ref{uvnhd} implies that $|R|$ is even, which further implies that $|R| \ge 6$.
  Then for any $a \in A$ and $b \in B$, 
  $G[\{u_1, \dotsc, u_4, a, b\}]$ and 
  $C - a - b + u_5 + u_6$ contain chorded cycles, a contradiction. 

  So we can assume $R$ is not Hamiltonian.  
  Let $pw$ be a hop on $P$ so that  $w \neq p'$.
  First assume $w \neq q'$.
  Without loss generality assume that $N_C(p') = A$. 
  By Lemmas~\ref{uvnhd} and \ref{pqchords}, $N_C(p) \cap N_C(q) = \emptyset$, and so there exists $cc' \in E(C)$ such that   $pcc'qPwp$ is a cycle with chord $pq$.
  By Lemmas~\ref{uvnhd} and \ref{pqchords},
  $|N_C(p') - c - c'| \ge 2$ and 
  $|N_C(q') - c - c'| \ge 1$, so $C - c - c' + p' + q'$ contains a chorded cycle, a contradiction.

  Now we can assume that both $pq'$ and $qp'$ are edges.
  Since $R \neq K_{2,3}$, we have that $|R| \ge 6$.  Let  $r \neq p$ and $r' \neq p'$  be the neighbors of $q$ and $q'$, respectively, on $P$.  Note that $r$ and $r'$ are endpoints of paths spanning $R$ so that $\|r,C\| = \|r',C\| = 3$.  By Lemmas \ref{uvnhd} and \ref{pqchords}, and because $pq',qp' \in E(G)$, we may assume that $N_C(p) = N_C(r) = N_C(r') = N_C(p') = A$ and $N_C(q) \cup N_C(q') \subseteq B$.  In particular, we may assume $qb_1 \in E(G)$ so that $pa_1b_2a_2b_1qp$ is a cycle with chord $pa_2$, and $rPp'a_3r$ is a cycle with chord $a_3r'$, a contradiction.

So $|R| = 5$ and $R \cong K_{2,3}$, as desired.
\end{proof}

\begin{lemma}\label{6only}
  If  $G[C] \cong K_{3,3}$ for every $C \in \mathcal{C}$, then $G \cong G_1(n,k)$. 
\end{lemma}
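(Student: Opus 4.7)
The plan is to establish an explicit bipartition $(X, Y)$ of $V(G)$, prove both parts are independent, and then conclude $G = G_1(n,k)$ by edge-maximality. By Lemma~\ref{K32_K22}, $R \cong K_{2,2}$ or $R \cong K_{2,3}$. Let $X_R \subseteq V(R)$ be the partite set of $R$'s bipartition containing all endpoints of spanning paths (so $X_R$ is either part when $R \cong K_{2,2}$, and the $3$-vertex side when $R \cong K_{2,3}$), and set $Y_R := V(R) \setminus X_R$. For each $C \in \Cc$, Lemmas~\ref{RCedges}, \ref{uvnhd}, and \ref{pqchords} force $N_C(u)$ to coincide with a common partite set of $G[C] \cong K_{3,3}$ for every $u \in X_R$, which I label $A_C$; I set $B_C := V(C) \setminus A_C$ and note that $N_C(v) \subseteq B_C$ for each $v \in Y_R$. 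Define
\[
X := X_R \cup \bigcup_{C \in \Cc} B_C, \qquad Y := Y_R \cup \bigcup_{C \in \Cc} A_C.
\]
A direct count gives $\{|X|, |Y|\} = \{3k-1,\, n - 3k + 1\}$, so $K_{|X|, |Y|} = G_1(n, k)$.

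Next I would show $X$ is independent; a symmetric argument handles $Y$. All candidate nonedges within $X$ are immediate (from the bipartite structure of $R$, of each $G[C]$, and from $N_C(X_R) = A_C$) except for a potential edge between $B_C$ and $B_{C'}$ for distinct $C, C' \in \Cc$. Suppose such an edge $bb' \in E(G)$ exists; this forces $k \ge 3$, and I aim to produce $k$ pairwise disjoint chorded cycles, contradicting the setup. When $R \cong K_{2,2}$ with $Y_R = \{v_3, v_4\}$, Lemma~\ref{pqchords} yields $\|v_i, C\| = \|v_i, C'\| = 3$ for $i \in \{3,4\}$, so $N_C(v_i) = B_C$ and $N_{C'}(v_i) = B_{C'}$; hence $v_3 b v_4 b' v_3$ is a $4$-cycle chorded by $bb'$. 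The remaining twelve vertices of $V(R) \cup V(C) \cup V(C')$ split as $\{v_1, b_2, b_3\} \cup A_C$ and $\{v_2, b'_2, b'_3\} \cup A_{C'}$, each inducing a copy of $K_{3,3}$ and hence a chorded $6$-cycle; combined with the $k-3$ untouched $K_{3,3}$'s in $\Cc \setminus \{C, C'\}$ this yields $k$ disjoint chorded cycles. When $R \cong K_{2,3}$ with $Y_R = \{q_1, q_2\}$, I would branch on which $q_j$ is adjacent to $b$ and which to $b'$: in the best case $q_1, q_2$ are both adjacent to both $b$ and $b'$ and the same $4$-cycle argument works; otherwise I would replace the $4$-cycle with a chorded $5$-cycle through $bb'$ incorporating one $p_i \in X_R$ with chord $p_i q_j$, and argue the remaining eleven vertices still decompose into two chorded cycles using the extra degree that Ore's condition forces on any $B$-vertex avoided by the $q_j$'s. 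The analogous case of an edge inside $Y$ between $A_C$ and $A_{C'}$ is handled symmetrically, swapping the roles of $X_R$ and $Y_R$.

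Once $X$ and $Y$ are independent, $G$ is bipartite with this bipartition, so $G \subseteq K_{|X|,|Y|} = G_1(n,k)$. Since $G_1(n,k)$ itself contains no $k$ disjoint chorded cycles (as noted following Definition~\ref{G1}), the edge-maximality of $G$ forces $G = G_1(n,k)$. The main obstacle I anticipate is the $R \cong K_{2,3}$ subcase: because $|N_C(q_j)| \in \{2, 3\}$ rather than being pinned at $3$, some $B_C$-vertex can be adjacent to neither $q_1$ nor $q_2$, so the choice of which chorded cycle to place $bb'$ in must be made carefully, and verifying that the remaining eleven vertices in every subcase still admit two further disjoint chorded cycles requires a delicate case analysis.
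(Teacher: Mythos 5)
Your global frame (exhibit the bipartition $X$, $Y$ explicitly, prove both sides independent, finish by edge-maximality) is sound and genuinely different in organization from the paper's proof, and your constructions do succeed when $R \cong K_{2,2}$, and also for an edge $aa'$ between $A_C$ and $A_{C'}$ even when $R \cong K_{2,3}$ (there the $4$-cycle $p_1ap_2a'$ with chord $aa'$ works, since every $p_i$ is complete to $A_C$ and $A_{C'}$). The genuine gap is exactly the subcase you flag --- $R \cong K_{2,3}$ with $bb' \in E(G)$, $b \in B_C$, $b' \in B_{C'}$ --- and it is not merely delicate but unclosable with the tools you name. First, your proposed repair cannot exist: since $N_C(p_i) = A_C$ and $N_{C'}(p_i) = A_{C'}$, we have $p_ib, p_ib' \notin E(G)$, so in any $5$-cycle through the edge $bb'$ containing $p_i$, the two cycle-neighbors of $p_i$ must be $q_1$ and $q_2$; hence any edge $p_iq_j$ is a cycle edge, never a chord, and the only possible chords of such a $5$-cycle are $q_jb'$ or $q_{j'}b$ --- precisely the edges missing in the bad subcase. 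Second, and more fundamentally, Lemma~\ref{pqchords} only gives $\|q_j, C\| \ge 2$, so it can happen that $N_R(b) \subseteq \{q_1\}$ and $N_R(b') \subseteq \{q_2\}$ (indeed $b$ may have no neighbor in $\{q_1,q_2\}$ at all, a branch your case split never reaches). In that situation the edges you are guaranteed inside $G[V(R) \cup V(C) \cup V(C')]$ form a bipartite graph with parts $X^* = \{p_1,p_2,p_3\} \cup B_C \cup B_{C'}$ (nine vertices) and $Y^* = \{q_1,q_2\} \cup A_C \cup A_{C'}$ (eight vertices), plus the single edge $bb'$. Any chorded cycle avoiding $bb'$ is bipartite and uses at least three vertices of each part; any cycle through the odd edge $bb'$ has odd length, and one checks that all guaranteed $5$-cycles through $bb'$ are then chordless, while every longer cycle using $bb'$ as edge or chord consumes at least four vertices of $X^*$. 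So three disjoint chorded cycles would need either $4+3+3 = 10$ vertices from the nine-vertex part, or $3+3+3 = 9$ from the eight-vertex part $Y^*$; both are impossible. No case analysis of explicit constructions can close this: the needed contradiction cannot live inside $G[V(R)\cup V(C)\cup V(C')]$ on forced edges alone.

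What is missing is the paper's re-optimization (swap) mechanism, which pins down neighborhoods \emph{before} any construction. Since $N_C(p_i) = A_C$, the graph $G[C - b + p_i]$ is again isomorphic to $K_{3,3}$, and if $b$ has at least one neighbor in $\{q_1,q_2\}$, then $R - p_i + b$ has a spanning path; the new collection satisfies (O1)--(O3), so Lemma~\ref{K32_K22} applies to it and forces $R - p_i + b \cong K_{2,3}$, i.e.\ $bq_1, bq_2 \in E(G)$, after which your $4$-cycle $q_1bq_2b'$ argument does go through. Vertices of $B_C$ with no neighbor in $\{q_1,q_2\}$ are then eliminated by a degree count as in the paper's proof: by Lemma~\ref{pqchords} such a vertex has degree at least $3k-1$, hence at least two neighbors outside $N_G(u_1)$, yet the set of vertices already shown to have full neighborhood $N_G(u_1)$ has size at least $3k-1$ out of at most $3k$, so it would be adjacent to one of them, a contradiction. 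Your proof needs this swap-and-recount step imported wholesale; as written, the deferred ``delicate case analysis'' cannot be completed.
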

\begin{proof}
By Lemma \ref{K32_K22}, $R \in \{K_{2,2}, K_{2,3}\}$.  So let $U_1,U_2 \subseteq V(R)$ be the partite sets of $R$ such that $|U_1| \ge |U_2| = 2$, and let $u_1 \in U_1$, $V_2 := N_G(u_1)$, and $V_1 := V(G) \setminus V_2$.  Since  $u_1$ is the end of spanning path of $R$, Lemma~\ref{pqchords} implies that $|V_2| = 3k - 1$.  Since $|G| \le 6(k - 1) + 5$, $|V_1| \le 3k$.  We aim to show that $N_G(v) = V_2$ for all $v \in V_1$.  This will imply that $G \cong G_1(n,k)$.

Fix $v \in V_1 - u_1$. Since $u_1v \notin E(G)$, Lemma~\ref{pqchords} implies that $d_G(v) \ge 3k - 1$.  If $v \in U_1$, then $v$ is the end of a spanning path of $R$, and by Lemmas \ref{RCedges}, \ref{uvnhd} and \ref{pqchords},
  $N_G(v) = N_G(u_1) = V_2$.  So we may assume $v \in V_1\setminus U_1$, and in particular, $v \in C$ for some $C \in \Cc$.

  Define $V_1' := \{u \in V_1 : \|u, U_2\| \ge 1\}$,  and suppose $v \in V_1' \setminus U_1$.  Recall that we are assuming $G[C] \cong K_{3,3}$ for all $C \in \Cc$ so that by Lemma~\ref{RCedges}, $G[C - v + u_1] \cong K_{3,3}$.  Furthermore, 
  $v$ is an end of a path of length $|R|$ in $R' := R - u_1 + v$.  This new collection and path satisfy (O1)-(O3), so by Lemma~\ref{K32_K22}, $R' \cong R$ and $N_G(v) = N_G(u_1) =  V_2$.

  Now suppose $v \in V_1 \setminus V_1'$.
  Since $d_G(v) \ge 3k - 1$ and $v$ has at most $3(k - 1)$ neighbors in $V_2$, $v$ must have two neighbors in $V_1$.
  By Lemmas~\ref{uvnhd} and \ref{pqchords}, for every $u_2 \in U_2$,
  $d_G(u_2) \ge 3k - 1$ and $N_G(u_2) \subseteq V_1$, so that
  $|V_1'| \ge 3k - 1$.  Since $|V_1| \le 3k$, $v$ has a neighbor, say $v'$, in $V_1'$.  However, by the above, $N_G(v') = V_2$, which contradicts the fact that $vv'$ is an edge.
  Therefore, $V_1' = V_1$ which finishes the proof of the lemma. 
\end{proof}

\begin{lemma}\label{one4}
Suppose there exists $C \in \mathcal C$ with $|C|=4$. Then $G \cong G_2(k)$.
\end{lemma}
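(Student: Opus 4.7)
The plan is to exploit the structural information provided by Lemma~\ref{structure}: since $|C| = 4$, we have $R \cong K_{2,3}$, $G[C] \cong K_{1,1,2}$, and $G[R \cup C] \cong K_{1,4,4}$. Let $c_4$ denote the unique vertex of $R \cup C$ adjacent to all other vertices of $R \cup C$, and let $X := \{p_1, p_2, p_3, c_2\}$ and $Y := \{q_1, q_2, c_1, c_3\}$ be the two $4$-element parts of this $K_{1,4,4}$. By Lemma~\ref{structure}, every $C' \in \mathcal C - C$ is isomorphic to $K_{3,3}$. The goal is to show that $c_4$ dominates $G$ and that, after labeling the partite sets $A', B'$ of each $C' \in \mathcal C - C$ appropriately, $V_1 := X \cup \bigcup_{C'} B'$ and $V_2 := Y \cup \bigcup_{C'} A'$ are independent sets of size $3k - 2$ with $G[V_1, V_2]$ complete bipartite, realizing $G \cong K_{3k-2, 3k-2, 1} = G_2(k)$.

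Fix $C' \in \mathcal C - C$. Lemma~\ref{pqchords} gives $\|p_i, C'\| = 3$ for $i \in [3]$, Lemma~\ref{RCedges} forces $N_{C'}(p_i)$ to be a partite set of $C'$, and Lemma~\ref{uvnhd} applied to the edges $p_i q_j \in E(R)$ forces all $N_{C'}(p_i)$ to coincide (call this common value $A'$) and gives $N_{C'}(q_j) = B'$. To handle $c_1, c_2, c_3$ I perform swaps within $R \cup C$: replace $C$ by $C^{\sharp} := \{c_4, p_1, q_1, c_1\}$, which is a chorded $4$-cycle of type $K_{1,1,2}$ inherited from the $K_{1,4,4}$ structure on $R \cup C$. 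The new residue $R^\sharp = \{p_2, p_3, q_2, c_2, c_3\}$ is $K_{2,3}$ with $c_2$ an endpoint of a spanning path, so Lemma~\ref{pqchords} in this new optimal configuration gives $\|c_2, C'\| = 3$ and Lemma~\ref{uvnhd} applied to $c_2 q_2 \in E(R^\sharp)$ forces $N_{C'}(c_2) = A'$. Analogous swaps with $C$ replaced by $\{c_4, p_1, p_2, c_3\}$ and by $\{c_4, p_1, p_2, c_1\}$ yield $N_{C'}(c_1) = N_{C'}(c_3) = B'$.

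The main obstacle is showing $c_4$ is adjacent to every vertex of $C'$. For $v \in A'$, replace $C'$ by $(C' - v) + q_1$, which is a chorded $6$-cycle, being isomorphic to $K_{3,3}$ with partite sets $(A' - v) \cup \{q_1\}$ and $B'$; the new residue $R' = \{p_1, p_2, p_3, q_2, v\}$ is $K_{2,3}$, so Lemma~\ref{RCedges} applied to $v$ and $C'' \in \mathcal C - C - C'$ in this configuration gives $\|v, C''\| \le 3$. Assuming for contradiction $c_4 v \notin E(G)$, we bound
\[
d_G(v) \le \|v, R\| + \|v, C\| + \|v, C'\| + \|v, \mathcal C - C - C'\| \le 3 + 1 + 3 + 3(k-3) = 3k - 2,
\]
while $v q_1 \notin E(G)$ together with $d_G(q_1) = 3k - 1$ (Lemma~\ref{pqchords}) and $\sigma_2(G) \ge 6k - 2$ force $d_G(v) \ge 3k - 1$, a contradiction. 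The case $v \in B'$ is symmetric using the swap $(C' - v) + p_1$. Hence $c_4$ dominates $G$.

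In the same swapped configurations, Lemma~\ref{uvnhd} applied to $p_1 v \in E(R')$ when $v \in A'$, together with $N_{C''}(p_1) = A''$ (the partite set of $C''$ playing the role of $A'$), forces $N_{C''}(v) \subseteq B''$; since the now-tight degree $d_G(v) = 3k - 1$ gives $\|v, C''\| = 3$, we get $N_{C''}(v) = B''$. Symmetrically, for $v \in B'$, $N_{C''}(v) = A''$. Combined with the previous steps, this shows $V_1$ and $V_2$ are independent and $G[V_1, V_2]$ is complete bipartite. Since $|V_1| = |V_2| = 4 + 3(k - 2) = 3k - 2$ and $c_4$ dominates, $G \cong G_2(k)$.
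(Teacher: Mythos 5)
Your overall strategy coincides with the paper's: exploit the $K_{1,4,4}$ structure on $R \cup C$, swap chorded $4$-cycles inside it so that the standing lemmas apply to new configurations satisfying (O1)--(O3), and pin down all neighborhoods in the $K_{3,3}$'s before concluding via the exact $K_{3k-2,3k-2,1}$ structure. However, two steps are asserted rather than proved. First, the claim $N_{C'}(q_j) = B'$: since $q_j$ lies on the size-two side of $R \cong K_{2,3}$, it is never an endpoint of a spanning path of $R$, so Lemmas~\ref{pqchords} and \ref{uvnhd} give only $N_{C'}(q_j) \subseteq B'$ with $\|q_j, C'\| \ge 2$. This matters downstream: if $\|q_1, C'\| = 2$, your replacement $(C'-v)+q_1$ is a $6$-cycle with only two chords, so the swapped collection violates (O2) and none of the standing lemmas may legitimately be applied to it (the same issue affects your appeal to $d_G(q_1) = 3k-1$, which Lemma~\ref{pqchords} gives only for endpoints). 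The repair is your own swap device: after exchanging $C$ for $\{c_4,p_1,p_2,c_3\}$, the residue is a $K_{2,3}$ whose size-three side is $\{q_1,q_2,c_1\}$, so $q_j$ becomes an endpoint of a spanning path and Lemma~\ref{pqchords} upgrades the bound to $\|q_j, C'\| = 3$.

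The more serious gap is the bound $\|v, C\| \le 1$ inside your displayed estimate: you offer no argument for it, and under the sole assumption $c_4 v \notin E(G)$ nothing you have established prevents $v$ from being adjacent to two of $c_1, c_2, c_3$ --- for instance to the nonadjacent pair $c_1, c_3$ --- in which case your count yields only $d_G(v) \le 3k-1$ or $3k$ and the contradiction evaporates. The paper closes exactly this step by a different and shorter move, which you should substitute for the degree count: in your swapped configuration the new residue $R' = \{p_1,p_2,p_3,q_2,v\}$ is again $K_{2,3}$ and $C$ is still the unique $4$-cycle of the collection, so Lemma~\ref{structure} applies \emph{to the new configuration} and gives $G[V(R') \cup V(C)] \cong K_{1,4,4}$. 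The dominating vertex of this $K_{1,4,4}$ must be adjacent to all three $p_i$ and to the rest of $C$; within $C$ only $c_4$ and $c_2$ are adjacent to all other vertices of $C$, and $c_2$ misses the $p_i$'s, so the dominating vertex is $c_4$ and $c_4 v \in E(G)$ follows outright (this is the content of the paper's line ``with Lemma~\ref{structure}, we have that $N_C(u) = (V(C) \setminus N_C(w')) + c$''). With these two repairs, the remainder of your argument --- tight degrees forcing $\|v, C''\| = 3$, Lemma~\ref{uvnhd} pinning $N_{C''}(v)$, and the resulting complete tripartite structure --- goes through and matches the paper's proof in substance.
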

\begin{proof}
  By Lemmas~\ref{structure} and \ref{K32_K22}, we can assume $R \cong K_{2,3}$, $G[C] \cong K_{1,1,2}$, and $G[R \cup C] \cong K_{1,4,4}$. Let $A'$ and $B'$ be the two partite sets of size four and $\{c\}$ be the partite set of size one in $G[R \cup C]$.  
  By symmetry, we can assume that any $v \in A' \cup B'$ is an 
  end of a spanning path in $R$ or the end of a spanning path
  of $G[V(G) \setminus V(\Cc')]$ for some collection $\Cc'$
  of $k-1$ vertex disjoint cycles that satisfies (O1)-(O3), 
  so, by Lemma~\ref{pqchords},
  $d_G(v) = 3k - 1$ and $\|v, \Cc - C\| = 3(k-2)$.
  By Lemma \ref{structure}, for all $D \in \Cc - C$,  
  $G[D] \cong K_{3,3}$, and, with Lemma \ref{uvnhd}, 
  we deduce that $\|v, D\| = 3$ and that
  we can label the partite sets of $D$ as $A_D$ and $B_D$
  so that for every $p \in A'$, $N_D(p) = B_D$ and
  for every $q \in B'$, $N_D(q) = A_D$.
  Therefore, there exists a partition $\{A, B, \{c\}\}$ of $V(G)$ such
  that 
  for every $p \in A'$, $N_G(p) = B + c$, 
  for every $q \in B'$, $N_G(q) = A + c$, and
  $|A| = |B| = 3k - 2$.

  If $u \in V(G) \setminus (A' \cup B')$, then there exists $D \in \Cc - C$,
  such that $u \in D$. 
  Let $p \in A' \cap V(R)$, 
  and $q \in B' \cap V(R)$
  and label $\{w, w'\} = \{p, q\}$ 
  so that $uw \notin E(G)$ and $uw' \in E(G)$.
  We have that
  $G[D - u + w] \cong K_{3,3}$
  and $G[R - w + u] \cong K_{3,2}$,
  so there exists a collection $\Cc'$ 
  of $k-1$ vertex disjoint cycles containing $C$ that satisfies (O1)-(O3),
  and there exists a spanning path of 
  of $G[V(G) \setminus V(\Cc')]$ such that
  $u$ is an endpoint or $u$ is the neighbor of an endpoint.
  Therefore, by Lemma~\ref{pqchords}, $d_G(u) \ge 3k - 1$, 
  so, with Lemma~\ref{structure}, we have that
  $N_C(u) = (V(C) \setminus N_C(w')) + c$ 
  and, for any $D' \in \Cc' - C$, 
  by Lemma~\ref{uvnhd}, 
  $N_{D'}(u) = D' \setminus N_{D'}(w')$.
  Therefore, 
  either $N_G(u) \supseteq B + c$ if $u \in A$ or
  $N_G(u) \supseteq A + c$ if $u \in B$.  Hence, 
  $G$ contains $G_2(k)$ as a spanning subgraph.  As $G_2(k)$ is edge-maximal with respect to not containing $k$ disjoint chorded cycles, $G \cong G_2(k)$.
\end{proof}

Using Lemmas~\ref{structure}, \ref{K32_K22}, \ref{6only}, and \ref{one4}, we conclude $G \in \{G_1(n,k),G_2(k)\}$.

\section{Concluding Remarks}\label{remarks}

Many variations on Theorems~\ref{CHT} and \ref{FinkelT}  have appeared, and suggest further extensions of Theorem~\ref{main}. We present only a small selection below.

A result of Gould, Hirohata, and Horn \cite{GHH} implies the following:
\begin{thm}
Let $G$ be a graph on $|G| \geq 6k$ vertices with $\delta(G) \geq 3k$. Then $G$ contains $k$ disjoint doubly chorded cycles.
\end{thm}

While it is not clear that $|G| \ge 6k$ is necessary, it would be interesting to characterize the sharpness examples for this theorem; that is, if 
$|G| \geq 6k$ and $\delta(G)=3k-1$ but $G$ does not contain $k$ disjoint doubly chorded cycles, what does $G$ look like?  For more results on the existence of $k$ disjoint multiply chorded cycles, see \cite{GHM}

Additionally, rather than consider $\delta(G)$ or $\sigma_2(G)$, one may consider the neighborhood union, $\min\{|N(x) \cup N(y)| : xy \in E(\overline{G})\}$.  See the following results.

\begin{thm}[Faudree-Gould, \cite {FG}]
If $G$ has $n \geq 3k$ vertices and $|N(x) \cup N(y)| \geq 3k$ for all nonadjacent pairs of vertices $x,y$, then $G$ contains $k$ disjoint cycles.
\end{thm}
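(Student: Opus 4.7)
I would proceed by induction on $k$. For the base case $k = 1$, suppose for contradiction that $G$ on $n \geq 3$ vertices contains no cycle; then $G$ is a forest. Either $G$ contains a tree component with at least three vertices, in which case that tree has two non-adjacent leaves with combined neighborhood of size at most $2$, or every component of $G$ has at most two vertices, in which case any two vertices chosen from different components are non-adjacent with combined neighborhood of size at most $2$. In either case $|N(x) \cup N(y)| \le 2 < 3$, contradicting the hypothesis, so $G$ must contain a cycle.

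For the inductive step, assume the theorem for $k-1$ and let $G$ satisfy the hypotheses for $k$. The strategy is to extract a short cycle $C$ and pass to $G' := G - V(C)$. For any non-adjacent pair $x, y$ in $G'$ we have
\[
|N_{G'}(x) \cup N_{G'}(y)| \;\ge\; |N_G(x) \cup N_G(y)| - |V(C)| \;\ge\; 3k - |V(C)|.
\]
When $|V(C)| = 3$, this yields the required lower bound $3(k-1)$, and since $|G'| = n - 3 \ge 3(k-1)$, the inductive hypothesis supplies $k-1$ disjoint cycles in $G'$; together with $C$ we obtain $k$ disjoint cycles in $G$. Hence if $G$ contains any triangle, we are done immediately.

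The remaining case, in which $G$ is triangle-free, is where the main obstacle lies. In a triangle-free graph, $N(u) \cap N(v) = \emptyset$ for every edge $uv$, so the neighborhood-union hypothesis translates into a strong degree-sum condition for non-adjacent pairs and forces the typical degree in $G$ to be of order $k$. My plan here is to locate a $4$-cycle $C$ whose vertex set contributes at most three vertices to $N_G(x) \cup N_G(y)$ for every non-adjacent pair $x, y$ of $G - V(C)$; this selective choice would once again restore the bound $3(k-1)$ and let the induction close. Such a $4$-cycle should exist because triangle-freeness together with high degree forces a near-bipartite structure in which many $4$-cycles are available, and one can hope to make a greedy or extremal choice that is compatible with every non-adjacent pair simultaneously. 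Executing this choice carefully, so that no single non-adjacent pair ``sees'' all four vertices of the chosen $4$-cycle in its neighborhood union, is the principal technical difficulty I anticipate; once it is overcome, the induction delivers the $k$ disjoint cycles.
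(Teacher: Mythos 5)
Your base case and your triangle-extraction step are sound: deleting a triangle $C$ reduces $|N(x)\cup N(y)|$ by at most $3$ for every pair $x,y$ that is nonadjacent in $G-V(C)$ (such a pair is also nonadjacent in $G$, since we pass to an induced subgraph), and the vertex count $n-3\ge 3(k-1)$ goes through. But the triangle-free case, which you yourself flag as the main obstacle, is not proved, and as sketched it does not work. First, there is an arithmetic failure at the extremal order: removing a $4$-cycle requires $n-4\ge 3(k-1)$, i.e.\ $n\ge 3k+1$, while the hypothesis only guarantees $n\ge 3k$. (The case $n=3k$ can in fact be rescued, but by a different observation you do not make: for nonadjacent $x,y$ one has $x,y\notin N(x)\cup N(y)$, so $|N(x)\cup N(y)|\le n-2=3k-2<3k$, forcing $G$ to be complete, and $K_{3k}$ trivially contains $k$ disjoint triangles.) Second, and more seriously, a triangle-free graph satisfying the hypothesis need not contain a $4$-cycle at all: nothing in your argument rules out girth at least $5$, and for such graphs your plan collapses entirely, since removing any cycle costs at least $5$ vertices while the induction can only afford a loss of $3$ in the neighborhood-union bound. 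Third, even when $4$-cycles abound, the ``selective choice'' is only a hope: a nonadjacent pair $x,y$ with $x$ adjacent to $c_1,c_3$ and $y$ adjacent to $c_2,c_4$ sees all four vertices of $C=c_1c_2c_3c_4$ without creating a triangle, and you give no mechanism forcing a $4$-cycle avoiding this for \emph{every} nonadjacent pair simultaneously. (Your stated requirement is also stronger than necessary --- pairs whose union exceeds $3k$ can absorb a drop of $4$ --- which suggests the right bookkeeping must track slack pair by pair, a genuinely different and harder argument than the greedy choice you describe.)

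For context, the paper you are being compared against does not prove this statement at all: it quotes the theorem from Faudree and Gould \cite{FG} in its concluding remarks as motivation for possible extensions of its main result. So there is no proof in the paper to measure your route against; judged on its own, your proposal correctly dispatches the easy half (triangles present) but leaves the essential half (triangle-free, and in particular $C_4$-free or high-girth graphs) unresolved, so it is not yet a proof.
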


\begin{thm}[Gould-Hirohata-Horn, \cite{GHH}]
Let $G$ be a graph on at least $4k$ vertices such that for any nonadjacent $x,y \in V(G)$, $|N(x) \cup N(y)| \geq 4k+1$. Then $G$ contains $k$ disjoint chorded cycles.
\end{thm}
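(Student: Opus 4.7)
The plan is to mirror the extremal-counterexample strategy used for Theorem~\ref{main}, but to replace the Ore-degree bookkeeping by neighborhood-union bookkeeping. Suppose toward contradiction that $G$ is a counterexample. Since the hypothesis is preserved under adding edges (each nonedge $xy$ either remains a nonedge, with $N(x) \cup N(y)$ only growing, or disappears), I may take $G$ edge-maximal among counterexamples. Since $K_n$ certainly contains $k$ disjoint chorded cycles, there is some nonedge $uv$; by edge-maximality $G + uv$ contains $k$ disjoint chorded cycles, one of which must use $uv$, so removing it leaves a collection of $k-1$ disjoint chorded cycles in $G$. Among all such collections I would fix one $\Cc$ satisfying (O1)--(O3) from Section~\ref{setup}, and set $R := G \setminus \Cc$.

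All the purely structural lemmas from Section~\ref{setup} (Lemmas~\ref{RCedges}--\ref{uvnhd} and \ref{P2P3}) apply unchanged, since they depend only on the optimality of $\Cc$ and not on any degree hypothesis. In particular $R$ contains no chorded cycle, and for every $v \in R$ and $C \in \Cc$ we have $\|v,C\| \le 4$ with equality forcing $G[C] \cong K_4$. I would then let $P$ be a longest path in $R$ and split into the two cases $V(P) \neq V(R)$ and $V(P) = V(R)$, paralleling Sections~\ref{R ne P} and \ref{long path}. In each case I select a nonadjacent pair $x,y$ inside $R$ (an endpoint of $P$ together with a vertex of $R \setminus V(P)$ in the first case; the two endpoints of $P$ in the second, or more delicate pairs when those are adjacent) and apply the hypothesis $|N(x) \cup N(y)| \ge 4k+1$. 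Writing
\begin{equation*}
|N(x) \cup N(y)| = |(N(x) \cup N(y)) \cap V(R)| + \sum_{C \in \Cc} |(N(x) \cup N(y)) \cap V(C)|
\end{equation*}
and bounding each summand via Lemmas~\ref{RCedges} and \ref{uvnhd}, the goal is to force the right-hand side strictly below $4k+1$, which is the contradiction.

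The main obstacle is that the neighborhood-union hypothesis is strictly weaker than the Ore-degree hypothesis: shared neighbors are counted only once, so the clean sum-based arguments behind Lemmas like \ref{vp} and \ref{pvnhd} do not transfer directly. They must be rebuilt as bounds of the form $|(N(x) \cup N(y)) \cap V(C)| \le f(G[C])$, where the slack gained per cycle is much smaller than in the $\sigma_2$ setting (roughly $|V(C)|$ versus $6 - \|x,C\| - \|y,C\|$). Extracting a contradiction therefore requires a sharper understanding of when $N_C(x)$ and $N_C(y)$ must coincide, since each common neighbor shrinks the union; this leans heavily on the $K_4$ versus $K_{3,3}$ dichotomy established in Lemma~\ref{RCedges} and on the refinement given by Lemma~\ref{uvnhd}. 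One expects the tight extremal configurations again to resemble $G_1(n,k)$-like complete bipartite blowups adapted to the weaker threshold $4k+1$, and verifying that no such configuration is a counterexample is the step I would anticipate taking the most care.
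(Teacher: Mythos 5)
You should first note that the paper does not prove this statement at all: it appears in Section~\ref{remarks} as a quoted result of Gould, Hirohata, and Horn \cite{GHH}, so there is no in-paper proof to match, and your proposal has to stand entirely on its own. As it stands it does not. The setup is fine --- edge-maximality is legitimate since the neighborhood-union hypothesis is monotone under adding edges, one recovers $k-1$ disjoint chorded cycles, and the purely structural lemmas (Lemmas~\ref{RCedges}, \ref{C4bounds}, \ref{noC5}, \ref{uvnhd}, \ref{P2P3}) indeed depend only on (O1)--(O3). But everything after that is a plan rather than a proof: the entire content of the theorem lives in the per-cycle bounds you defer, and you yourself flag that step as unresolved. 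A proposal that reduces the theorem to ``rebuild the counting lemmas with smaller slack and check the extremal configurations'' has not proved anything, because that is exactly where the difficulty is.

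Moreover, the transplant fails quantitatively, not just in bookkeeping. The union hypothesis only yields $d(x)+d(y)\ge |N(x)\cup N(y)|\ge 4k+1$, which is strictly weaker than the $6k-2$ degree-sum that drives Lemmas~\ref{vp}, \ref{pbuds}, \ref{pvnhd}, and \ref{pqchords}; every one of those arguments is tight against $6(k-1)$ edges into $\Cc$, so with $4k+1$ the inequalities simply do not close for $k\ge 2$. Working instead with the union statistic, as you propose, the natural decomposition gives no contradiction either: if $G[C]\cong K_{3,3}$ with $N_C(x)$ and $N_C(y)$ equal to opposite partite sets (a configuration Lemma~\ref{uvnhd} actively produces for adjacent pairs, and nothing you cite excludes for nonadjacent ones), then $|(N(x)\cup N(y))\cap V(C)|=6$, so $\sum_{C\in\Cc}|(N(x)\cup N(y))\cap V(C)|$ can reach $6(k-1)\ge 4k+1$ once $k\ge 4$. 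So the right-hand side of your displayed identity is not forced below $4k+1$ by Lemmas~\ref{RCedges} and \ref{uvnhd}; one needs genuinely new exchange arguments showing that any cycle contributing a union of $5$ or $6$ can be traded for a shorter chorded cycle or split into two, and that is the substance of the actual Gould--Hirohata--Horn proof, which your sketch neither supplies nor reduces to a finite check.
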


\begin{thm}[Gould-Hirohata-Horn, \cite{GHH}]
Let $G$ be a graph on {$n>30k$} vertices such that for any nonadjacent $x,y \in V(G)$, $|N(x) \cup N(y)| \geq {2k+1}$. Then $G$ contains $k$ disjoint cycles.
\end{thm}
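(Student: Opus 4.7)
The plan is to proceed by induction on $k$. For the base case $k=1$, the hypothesis is $n > 30$ and $|N(x) \cup N(y)| \geq 3$ for every nonadjacent pair. If $G$ were acyclic, it would be a forest on more than $30$ vertices, and such a forest must contain two leaves $u,v$ at distance at least $2$ (either in distinct components, or at the ends of a path of length $\ge 2$). Such $u,v$ are nonadjacent and satisfy $|N(u) \cup N(v)| \leq 2$, contradicting the hypothesis. Hence $G$ contains a cycle.

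For the inductive step, I would apply the induction hypothesis (using $n > 30k > 30(k-1)$) to obtain $k-1$ disjoint cycles, and among all such families choose $\mathcal{C} = \{C_1,\ldots,C_{k-1}\}$ minimizing $|V(\mathcal{C})|$. Set $R := G - V(\mathcal{C})$. If any additional cycle disjoint from $\mathcal{C}$ exists we are done, so assume $R$ is acyclic, i.e.\ $R$ is a forest. A short-cycle argument (Bondy--Simonovits style, or a direct exchange) combined with the minimality of $|V(\mathcal{C})|$ should bound the length of each $C_i$ by a constant, so $|V(\mathcal{C})| = O(k)$ and $|R| \geq n - O(k)$ is very large compared with $k$.

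The heart of the argument is then a rearrangement step. Since $R$ is a forest on roughly $27k$ or more vertices, it contains an independent set $I$ of size at least $|R|/2 \gtrsim 14k$. Any two $x,y \in I$ are nonadjacent in $G$, so $|N_G(x) \cup N_G(y)| \geq 2k+1$; but each such vertex has only $O(1)$ neighbors inside $R$ (as $R$ is a forest), so essentially all of $N(x) \cup N(y)$ lies in $V(\mathcal{C})$. Double-counting the edges from $I$ to $V(\mathcal{C})$ and pigeonholing over the $k-1$ cycles shows that some $C_i$ receives many edges from $I$. Using these edges, together with paths through $R$ connecting pairs in $I$, one aims to partition $V(C_i) \cup S$ (for a small set $S \subseteq V(R)$) into two vertex-disjoint cycles, producing in total $k$ disjoint cycles and contradicting the maximality of $\mathcal{C}$.

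The main obstacle will be the exchange/splitting step in this neighborhood-union setting: the familiar versions of such arguments rely on a lower bound on minimum degree or on $\sigma_2(G)$, so one must carefully track which vertices of $V(C_i)$ lie in $N(x) \cup N(y)$ for various pairs $x,y \in I$, rather than counting individual degrees. The constant $30$ in the hypothesis $n > 30k$ presumably arises as the slack needed to guarantee enough room inside $R$ (both for the independent set and for the supply of ``detour'' vertices) so that this exchange can always be performed; sharpening this constant would require a finer structural description of $R$ and of its attachment pattern to the cycles in $\mathcal{C}$.
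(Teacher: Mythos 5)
First, a point of reference: this theorem is not proved in the paper at all. It appears in the concluding remarks as a quoted result of Gould, Hirohata, and Horn \cite{GHH}, cited as motivation for possible extensions of the main theorem; so there is no in-paper argument to compare yours against, and your attempt has to stand on its own.

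Judged on its own, your proposal has genuine gaps and does not constitute a proof. The base case is fine (two leaves of a forest on more than $30$ vertices are nonadjacent and have $|N(u)\cup N(v)|\le 2$), but the inductive step breaks down at exactly the points you gesture past. (1) The claim that minimality of $|V(\mathcal{C})|$ bounds each cycle length by a constant is unjustified, and this is precisely where the neighborhood-union hypothesis differs from a $\delta$ or $\sigma_2$ hypothesis: the condition $|N(x)\cup N(y)|\ge 2k+1$ permits vertices of very small degree, so the standard shortening argument (a vertex of $R$ with three neighbors on a long cycle $C_i$ yields a shorter cycle) has no supply of high-degree vertices to run on, and $|V(\mathcal{C})|=O(k)$ does not follow. (2) The assertion that each vertex of the independent set $I$ has only $O(1)$ neighbors inside the forest $R$ is false as stated: a forest can contain stars, so individual vertices of $I$ may have arbitrarily many neighbors in $R$. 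Only the \emph{average} degree in a forest is below $2$; you would need to restrict $I$ to low-degree vertices and re-verify its size, and even then the degree-sum you extract from pairs in $I$ is controlled only through unions, not sums, so the double count toward $V(\mathcal{C})$ needs care (many vertices of $I$ can share the same few neighbors). (3) The crucial exchange step --- splitting $V(C_i)\cup S$ into two disjoint cycles --- is exactly the heart of any such proof, and you explicitly leave it as an acknowledged obstacle rather than carrying it out. As it stands, the proposal is a plausible plan in the Corr\'adi--Hajnal tradition, but the three items above, especially (1) and (3), are the substance of the theorem, not routine details; for a complete argument you should consult the actual proof in \cite{GHH}.
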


\begin{thm}[Qiao, \cite{Q}]
Let $r,s$ be nonnegative integers, and let $G$ be a graph on at least $3r+4s$ vertices such that for any nonadjacent $x,y \in V(G)$, $|N(x) \cup N(y)| \geq 3r+4s+1$. Then $G$ contains $r+s$ disjoint cycles,  $s$ of them chorded.
\end{thm}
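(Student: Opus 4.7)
The plan is to reduce Qiao's theorem to the Faudree-Gould theorem and then upgrade $s$ of the cycles to be chorded. Since $3r+4s+1 \geq 3(r+s)+1 > 3(r+s)$ and $3r+4s \geq 3(r+s)$, the hypotheses imply those of Faudree-Gould with $k=r+s$, so $G$ contains a family of $r+s$ vertex-disjoint cycles. The remaining task is to show that such a family can be chosen with at least $s$ chorded members.

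Choose a family $\Cc$ of $r+s$ vertex-disjoint cycles in $G$ satisfying, in order of precedence: (Q1) the number of chorded cycles in $\Cc$ is maximum; (Q2) subject to (Q1), $\sum_{C \in \Cc}|V(C)|$ is minimum; (Q3) subject to (Q1) and (Q2), the number of edges in $R := G - V(\Cc)$ is maximum. Suppose for contradiction that $\Cc$ contains fewer than $s$ chorded cycles, hence at least $r+1$ unchorded ones. By (Q2) every unchorded member is a triangle, and by (Q1) no unchorded triangle $C_0 \in \Cc$ can be enlarged to a chorded cycle using only $V(R)$. Pick such a triangle $C_0$ and set $H := G[V(R) \cup V(C_0)]$; note $|V(H)| = |V(G)| - 3(r+s-1) \geq s + 3$.

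The main step is to use the neighborhood-union condition on $V(H)$ to locate a chorded cycle in $H$ disjoint from $\Cc \setminus \{C_0\}$, contradicting (Q1). For any two nonadjacent $x,y \in V(H)$,
\[
|N_G(x) \cup N_G(y)| \geq 3r + 4s + 1,
\]
yet $V(G) \setminus V(H)$ has only $3(r+s-1)$ vertices, so a majority of $N_G(x) \cup N_G(y)$ must lie in $V(H)$; more precisely, $\|\{x,y\}, V(H)\| \geq 3r + 4s + 1 - 3(r+s-1) = s + 4$. Iterating this count over pairs in $V(H)$, together with the fact that $C_0$ is already a triangle, yields the combinatorial room needed to extract a chorded cycle in $H$: either two vertices of $R$ share two common neighbors in $V(C_0)$ (immediate chorded 4-cycle through $C_0$), or $R$ itself is dense enough to contain a chorded cycle, or a path of length two in $R$ attaches to $C_0$ via a vertex with two neighbors on $C_0$.

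The main obstacle is the residual case in which $\Cc \setminus C_0$ absorbs most of the neighbors of $V(H)$, so that $H$ is sparse. Here the approach is a swap argument: take a cycle $C' \in \Cc \setminus \{C_0\}$ and a vertex $w \in V(C')$ with many neighbors in $V(H)$. If $C'$ is unchorded (a triangle), exchange $w$ with a vertex $u \in V(R)$ whose neighborhood in $V(C')$ contains two consecutive vertices, producing a new triangle on the same three vertex slots but with an extra edge to $u$, contradicting (Q1). If $C'$ is chorded, exchange $w \in V(C')$ with $u \in V(R)$ so that $C' - w + u$ remains a chorded cycle (using that most vertices of a chorded cycle have degree at least 3 in $C'$), then use $w$ together with $V(C_0) \cup V(R)$ to build a chorded cycle on at least $4$ vertices, again contradicting (Q1) or (Q3). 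Verifying that the swap always succeeds in small boundary cases (when $|V(R)|$ is close to $s$ or when every $C \in \Cc$ has exactly $4$ vertices) requires the same kind of detailed $\|v, C\|$ analysis carried out in Section~\ref{setup}, in particular an analogue of Lemma~\ref{RCedges} for the neighborhood-union setting.
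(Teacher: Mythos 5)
The paper does not prove this statement; it is quoted verbatim from Qiao \cite{Q} in the concluding remarks as motivation for further work, so your proposal must stand entirely on its own. The reduction to Faudree--Gould \cite{FG} is fine, and the extremal setup (Q1)--(Q3) with exchange arguments is the right general framework (it mirrors (O1)--(O3) in Section~\ref{setup}), but the argument has a fatal gap at its first real step: the claim that (Q2) forces every unchorded member of $\mathcal{C}$ to be a triangle. An unchorded cycle is an \emph{induced} cycle, and an induced cycle of length $\ell \ge 4$ contains no shorter cycle on its own vertex set, so minimizing $\sum_{C \in \mathcal{C}} |V(C)|$ cannot shrink it; nothing in (Q1)--(Q3) rules out long chordless cycles in $\mathcal{C}$. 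Consequently $|V(G) \setminus V(H)|$ need not equal $3(r+s-1)$ and can be arbitrarily large relative to $3r+4s$, so the bound $\|\{x,y\}, V(H)\| \ge s+4$ --- the engine of the entire ``combinatorial room'' argument --- evaporates. Even granting the triangle claim for unchorded cycles, the arithmetic is still wrong, because the chorded members of $\mathcal{C}$ have at least four vertices each: with $s-1$ chorded cycles one gets $|V(G)\setminus V(H)| \ge 3r + 4(s-1)$, leaving only $\|\{x,y\},V(H)\| \ge 5$, not $s+4$.

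The later steps are also asserted rather than proved, and at least one is incorrect as stated: a vertex $u \in V(R)$ adjacent to two consecutive vertices of an unchorded triangle $C'$ yields only another triangle (you need $u$ adjacent to all three vertices of $C'$ to obtain $K_4$), so the proposed swap produces no gain in (Q1); likewise ``use $w$ together with $V(C_0) \cup V(R)$ to build a chorded cycle'' is precisely the statement under proof, not a step toward it, and the trichotomy offered for extracting a chorded cycle from $H$ is neither exhaustive nor verified. Your closing deferral to ``an analogue of Lemma~\ref{RCedges} for the neighborhood-union setting'' concedes the substance of the theorem: in Qiao's paper, as in the degree-sum analogue of Chiba--Fujita--Gao--Li \cite{CFGL}, developing and applying exactly such lemmas through extensive case analysis \emph{is} the proof. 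What you have is a plausible skeleton with the load-bearing counting step broken and the case analysis missing.
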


\section{Acknowledgements}
The authors would like to thank Ron Gould, Megan Cream, and Michael Pelsmajer for productive conversations about this problem.

\end{document}